\theoremstyle{definition}
\newtheorem{theorem}{Theorem}
\newtheorem*{theorem*}{Theorem}
\numberwithin{theorem}{section}
\newtheorem{proposition}[theorem]{Proposition}
\newtheorem{lemma}[theorem]{Lemma}
\newtheorem{remark}[theorem]{Remark}
\newtheorem{example}[theorem]{Example}
\newtheorem{cor}[theorem]{Corollary}
\newtheorem*{question*}{Question}
\newenvironment{customthm}[1]
  {\innercustomthm}
  {\endinnercustomthm}
\DeclareMathOperator*{\bigtimes}{\text{\raisebox{-0.25 ex}{\huge$\times$}}}
\DeclareMathOperator{\grad}{grad}
\DeclareMathOperator{\wt}{wt}
\DeclareMathOperator{\Sym}{Sym}
\DeclareMathOperator{\gr}{gr}
\DeclareMathOperator{\initial}{in}
\DeclareMathOperator{\sq}{sq}
\DeclareMathOperator{\essential}{es}
\newcommand{\la}{\lambda}
\newcommand{\om}{\omega}
\newcommand{\mU}{\mathcal U}
\newcommand{\fg}{\mathfrak{g}}
\newcommand{\msl}{\mathfrak{sl}}
\newcommand{\fn}{\mathfrak{n}}
\newcommand{\fh}{\mathfrak{h}}
\newcommand{\bC}{\mathbb{C}}
\newcommand{\bP}{\mathbb{P}}
\newcommand{\bR}{\mathbb{R}}
\newcommand{\bZ}{\mathbb{Z}}
\newcommand{\bd}{{\bf d}}
\DeclareMathSymbol{\lsb@l}{\mathalpha}{letters}{`l}
\title{Gelfand--Tsetlin degenerations of representations and flag varieties}
\author{I. Makhlin}
\address{Skolkovo Institute of Science and Technology\\ 
Center for Advanced Studies\\
Bolshoy Boulevard 30, bld. 1\\
Moscow 121205\\
Russia}%\newline
\address{National Research University Higher School of Economics\\
International Laboratory of Representation Theory and Mathematical Physics\\
Ulitsa Usacheva 6\\Moscow 119048\\Russia\vspace{2ex}}
\email{imakhlin@mail.ru}
\begin{document}

\maketitle

\begin{abstract}
Our main goal is to show that the Gelfand--Tsetlin toric degeneration of the type A flag variety can be obtained within a degenerate representation-theoretic framework similar to the theory of PBW degenerations. In fact, we provide such frameworks for all Gr\"obner degenerations intermediate between the flag variety and the GT toric variety. These degenerations are shown to induce filtrations on the irreducible representations and the associated graded spaces are acted upon by a certain associative algebra. To achieve our goal, we construct embeddings of our Gr\"obner degenerations into the projectivizations of said associated graded spaces in terms of this action. We also obtain an explicit description of the maximal cone in the tropical flag variety that parametrizes the Gr\"obner degenerations we consider. In an addendum we propose an alternative solution to the problem which relies on filtrations and gradings by non-abelian ordered semigroups. 
\end{abstract}

\section*{Introduction}

Toric degenerations as well as other flat degenerations of flag varieties have been studied extensively over the past thirty years. A fascinating aspect of this field is the large number of constructions from diverse research areas that produce these degenerations as well as the relationships between these constructions. We recommend~\cite{knutson} for a short overview and~\cite{fafltoric} for a more detailed survey. 

One such source of degenerations of flag varieties is the recently popular field of PBW degenerations where one first degenerates the representations and then defines the degenerate variety in this context. The original construction here is due to~\cite{Fe2}. This paper was written with the initial goal of showing that the scope of this approach is broader than previously believed and that other, seemingly unrelated, degenerations of the flag variety can be obtained in similar fashion. More specifically, we focus our attention on what is one of the earliest and most popular flat degenerations: the Gelfand--Tsetlin toric degeneration of the type A flag variety. It was first constructed in~\cite{GL}, the connection to Gelfand--Tsetlin polytopes is due to~\cite{KM}. The reader is also referred to~\cite[Section 14]{MS} for an exposition and to~\cite{BCKS,Ch,Ca,NNU,L} for various generalizations and interpretations. The main goal of this paper is to answer the following.\vspace{2mm}
\centerline{
\begin{minipage}{.85\textwidth}
\emph{\textbf{Motivating question.} How can one degenerate the representation theory of $\msl_n$ in order to obtain the Gelfand--Tsetlin toric degeneration within a context similar to that of PBW degenerations ?}
\end{minipage}
}\vspace{2mm}

First, let us outline a general setting which covers the known constructions of PBW degenerations. We will refer to such settings as ``degenerate representation theories''. For a complex semisimple Lie algebra $\fg$ choose a triangular decomposition and let $\fn_-$ be the nilpotent subalgebra spanned by negative root vectors. Consider a filtration of the universal enveloping algebra $\mU(\fn_-)$ which respects multiplication and provides an associated graded algebra $\gr\mathcal U(\fn_-)$. The filtration on $\mU(\fn_-)$ induces a filtration on every finite-dimensional irreducible representation $L_\la$, the associated graded space $\gr L_\la$ is then naturally a $\gr\mathcal U(\fn_-)$-module, the degeneration of $L_\la$. Furthermore, for certain well-behaved filtrations it turns out that $\gr\mathcal U(\fn_-)$ is itself a universal enveloping algebra, hence $\gr L_\la$ is acted upon by the corresponding Lie group $N^{\gr}$. In $\bP(\gr L_\la)$ consider the point corresponding to the line of highest weight vectors. The closure $F^{\gr}$ of the orbit of this point under the $N^{\gr}$-action is the corresponding degenerate flag variety which, in some situations, constitutes a flat degeneration.

We point out that most of the research carried out on this subject is concerned specifically with the standard filtration by PBW degree. In this case $\gr\mathcal U(\fn_-)$ is the symmetric algebra $S(\fn_-)$ and $N^{\gr}$ is simply $\bC^{\dim\fn_-}$ under addition, while $F^{\gr}$ is, in general, some non-toric singular variety known as the \emph{abelian PBW degeneration}. The (ongoing) studies of this particular setting have resulted in works by a wide range of authors, 
%reaching into representation theory% (\cite{Fe1}, \cite{FFL1}, \cite{FFL2}, \cite{CF}, ...)
%, algebraic geometry % (\cite{Fe2}, \cite{CFR}, \cite{FFiL}, \cite{Ki}, ...)
%and combinatorics. % (\cite{ABS}, \cite{chainorder}, \cite{FM}, \cite{Bi}, ...)
in particular, see ~\cite{Fe2,FFL1,ABS,FFL2,CFR,CL}. 
However, to us it is important that this scenario can be generalized beyond the standard PBW filtration to obtain a wider class of flat degenerations~\cite{favourable,fafefom}. 

We give two different answers to our motivating question which bear certain parallels to these two respective papers. However, each of these answers involves a generalization of the above concept of a degenerate representation theory.

To give the first answer we consider a family of Gr\"obner degenerations $F^S$ that includes the Gelfand--Tsetlin toric degeneration (from now on we set $\fg=\msl_n$). %These $F^S$ are given by $\bZ$-gradings $S$ on the Pl\"ucker variables satisfying certain linear inequalities.
We show that each such Gr\"obner degeneration $F^S$ defines a filtration on every $L_\la$ which produces the degenerate representations $L_\la^S$. These filtrations may also be obtained representation-theoretically, however, the key difference from previous constructions is that we first define a certain deformation $\Phi_n$ of $\mathcal U(\fn_-)$ that acts on every $\fg$-representation $L_\la$ and degenerate the action of this algebra rather than that of $\mU(\fn_-)$. %The algebra $\Phi_n$ is generated by $\varphi_{i,j}$ with $1\le i<j\le n$ (i.e. corresponding to the root vectors in $\fn_-$) and is filtered by assigning $\bZ$-degrees to these $\varphi_{i,j}$. 
Thus we obtain a $\Phi_n$ action on every $L_\la^S$. However, $\Phi_n$ is not a universal enveloping algebra and we need to find a different method of recovering the degenerate flag variety from of the degenerate representation theory, a method that would not rely on the existence of a group action. We propose two such methods within our first answer to the motivating question. The first is to imitate the group action by taking exponentials. Let $\Phi_n$ be generated by $\varphi_{i,j}$ with $1\le i<j\le n$, corresponding to the negative roots.
\begin{customthm}{A}[cf. Theorem~\ref{main}]\label{thma}
$F^S$ is isomorphic to the closure of the set of points of the form $\prod_{i,j}\exp(c_{i,j}\varphi_{i,j})\bm v_\la^S$ where $c_{i,j}\in\bC$ and $\bm v_\la^S\in\bP(L_\la^S)$ is the highest weight line.
\end{customthm}

The second method relies on the existence of tensor products and Cartan components for our degenerate representations. These provide us with a commutative algebra structure on $\mathcal P^S=\bigoplus_\la (L_\la^S)^*$. The degenerate flag variety can be defined as the ``multigraded $\mathrm{Proj}$'' of $\mathcal P^S$, in other words, we prove the following
\begin{customthm}{B}[cf. Theorem~\ref{mainproj}]\label{thmb}
$\mathcal P^S$ is generated by the components $(L_{\om_i}^S)^*$ (for fundamental weights $\om_i$) and the kernel of the surjection $\bC[\bigoplus_i (L_{\om_i}^S)^*]\to \mathcal P^S$ cuts out a subvariety in $\bigtimes_i\bP(L_{\om_i}^S)$ that is isomorphic to $F^S$.
\end{customthm}
While the characterization in Theorem~\ref{thma} is similar to the original orbit closure definition, the one in Theorem~\ref{thmb} has the advantage of being independent of the highest weight. Let us also note that while analogs of Theorem~\ref{thmb} are not given explicitly in~\cite{FFL1,favourable,fafefom} such analogs do hold and can easily be derived. 

This first answer to the motivating question has several similarities to~\cite{fafefom}. In both cases the Gr\"obner degenerations in consideration are parametrized by a polyhedral cone and points in the cone's interior provide a toric degeneration. In~\cite{fafefom} this toric degeneration is the one associated with the Feigin--Fourier--Littelmann--Vinberg (or FFLV) polytope (\cite{FFL1}), here we obtain the desired Gelfand--Tsetlin toric degeneration. Moreover, similarly to~\cite{fafefom} we show that this cone can be identified with a maximal cone  in the tropical flag variety. Thus we obtain what is the second explicit description of a maximal cone in every (type A) tropical flag variety. Furthermore, an important role in this paper is played by a monomial basis in the spaces $L_\la$ and $L_\la^S$, this basis is given by integer points in a unimodular transform of the Gelfand--Tsetlin polytope. Its role is similar to that of the FFLV basis in~\cite{fafefom}, which is a monomial basis given by integer points in the FFLV polytope. We point out that the basis obtained here should not be viewed as an entirely new construction, it is seen to be a subset of the Chari--Loktev basis in the corresponding local Weyl module (\cite{ChL}). Its existence also follows from the results in~\cite{R} concerning canonical bases in quantum groups (although the connection with Gelfand--Tsetlin polytopes is not mentioned).

The second alternative answer to our motivating question is a later result that is presented as an addendum. The approach here has similarities to~\cite{favourable}. In the latter paper a PBW basis in $\mU(\fn_-)$ is fixed, such a basis is in natural bijection with the abelian monoid $\bZ_{\ge0}^{\{1\le i<j\le n\}}$. An additive total order on this monoid is then chosen, this provides a $\bZ_{\ge0}^{\{1\le i<j\le n\}}$-filtration on $\mU(\fn_-)$. Under certain restrictions on the total order this filtration is multiplicative and the associated graded algebra is a universal enveloping algebra. The authors obtain a degenerate representation theory of the kind described above where all $L_\la^{\gr}$ are $\bZ_{\ge0}^{\{1\le i<j\le n\}}$-graded. The orbit closure is then seen to be a toric variety. A specific choice of the PBW basis and total order is shown to provide the FFLV toric degeneration.

The idea here is to proceed similarly to~\cite{favourable} without, however, having to fix a PBW basis. Note that the set of \emph{all} PBW monomials in $\mU(\fn_-)$ is in bijection with a \emph{non-abelian} free monoid $\mathcal F$ with $n\choose 2$ generators. Any multiplicative total order on $\mathcal F$ provides a multiplicative $\mathcal F$-filtration on $\mU(\fn_-)$. We then may consider the corresponding associated $\mathcal F$-graded algebra and associated $\mathcal F$-graded representations. The key difference from degenerate representation theories that were considered previously is that these objects are graded by a non-abelian semigroup while in previous constructions the grading was always by $\bZ$ or $\bZ^k$. 

We then show how the Gelfand--Tsetlin toric degeneration can be obtained within this general framework. We define a specific total order on $\mathcal F$ and describe the resulting degenerate representation theory. Next we prove an analog of Theorem~\ref{thma} where the $\varphi_{i,j}$ are replaced by the generators of $\gr\mU(\fn_-)$ and $F^S$ is specifically the toric degeneration. We also define a tensor product on the degenerate representations and prove an analog of Theorem~\ref{thmb}. 

The main advantage of this second answer to our question is that the degenerate algebra action appears naturally as a degeneration of the $\mU(\fn_-)$-action rather than by degenerating a different action defined ad hoc, as was the case with $\Phi_n$. A disadvantage, however, is that the other (non-toric) Gr\"obner degenerations appearing in the first answer are not recovered, nor is the connection with tropical geometry.

\section{Generalities on Gr\"obner degenerations of flag varieties}\label{generalities}

For a fixed $n\ge 2$ consider the Lie group $G=SL_n(\bC)$ with Borel subgroup $B$ and tangent algebra $\fg=\msl_n(\bC)$. Let $\mathfrak b\subset\fg$ be the Borel subalgebra tangent to $B$, let $\fh\subset\mathfrak b$ be the Cartan subalgebra and let $\fg=\mathfrak b\oplus\fn_-$ for nilpotent subalgebra $\fn_-$. For $1\le k\le n-1$ denote the simple roots $\alpha_k\in\fh^*$  and let $\om_k\in\fh^*$ be the corresponding fundamental weights. Denote the positive roots \[\alpha_{i,j}=\alpha_i+\ldots+\alpha_{j-1}\] for $1\le i<j\le n-1$. Let $\fn_-$ be spanned by negative root vectors $f_{i,j}$ with weight $-\alpha_{i,j}$. Our choice of basis in $\fh^*$ will be the set of fundamental weights, i.e.\ $(a_1,\ldots,a_{n-1})$ will denote the weight $a_1\om_1+\ldots+a_{n-1}\om_{n-1}$.

For a dominant integral $\fg$-weight $\la$ let $L_\la$ be the irreducible representation of $\fg$ with highest weight $\la$ and highest weight vector $v_\la$. Let the $n$-dimensional complex space $V$ be the tautological representation of $\fg=\msl_n$ with basis $e_1,\ldots,e_n$. The irreducible representations with fundamental highest weights can be explicitly described as $L_{\om_k}=\wedge^k V$ with a basis consisting of the vectors \[e_{i_1,\ldots,i_k}=e_{i_1}\wedge\ldots\wedge e_{i_k}.\] We may assume that $v_{\om_k}=e_{1,\ldots,k}$.

Consider the variety of complete flags $F=G/B$ and the Pl\"ucker embedding \[F\subset\bP=\bP(L_{\om_1})\times\ldots\times\bP(L_{\om_{n-1}}).\] The product $\bP$ is equipped with the Pl\"ucker coordinates $X_{i_1,\ldots,i_k}$ with $1\le k\le n-1, 1\le i_1<\ldots<i_k\le n$, coordinate $X_{i_1,\ldots,i_k}$ corresponding to $e_{i_1,\ldots,i_k}\in L_{\om_k}$. The homogeneous coordinate ring of $\bP$ is $R=\bC[\{X_{i_1,\ldots,i_k}\}]$. The multi-homogeneous coordinate ring of $F$ is the \emph{Pl\"ucker algebra} $\mathcal P=R/I$, where $I$ is the ideal of Pl\"ucker relations.

Note that $R$ is naturally graded by the semigroup of dominant integral weights with the homogeneous component $R_\la$ corresponding to weight $\la=(a_1,\ldots,a_{n-1})$ spanned by monomials with total degree in variables of the form $X_{i_1,\ldots,i_k}$ equal to $a_k$. We will denote this grading $\wt$. Since the ideal $I$ is $\wt$-homogeneous, so is $\mathcal P$. In the latter, the homogeneous component $\mathcal P_\la$ of degree $\la$ is identified with the dual representation $L_\la^*$. These classical definitions and results concerning $\msl_n$-representations and flag varieties can be found in~\cite{carter} and~\cite[Chapter 9]{fulton}.

Now consider a collection of integers $S=(s_{i_1,\ldots,i_k})$, one for each Pl\"ucker variable. This provides a $\bZ$-grading on $R$ by setting $\grad^S X_{i_1,\ldots,i_k}=s_{i_1,\ldots,i_k}$. Consider the initial ideal $\initial_{\grad^S}I$ (spanned by nonzero components of minimal grading of elements of $I$). We will be considering the subvariety $F^S$ in $\bP$ defined by this ideal. Varieties of the form $F^S$ are known as {\it Gr\"obner degenerations} of $F$.

We have a decreasing $\bZ$-filtration on $R$ with the $m$th filtration component $R_{\ge m}$ being spanned by monomials in $R$ of $\grad^S$ no less than $m$. This induces a decreasing $\bZ$-filtration on $\mathcal P$ with components $\mathcal P_{\ge m}$, note that this is a filtered algebra structure. We denote the associated $\bZ$-graded algebra $\mathcal P^S=\bigoplus_m \mathcal P_{\ge m-1}/\mathcal P_{\ge m}$. 

\begin{proposition}\label{initdegen}
$\mathcal P^S$ and $R/\initial_{\grad^S}I$ are isomorphic as $\bZ$-graded algebras. 
\end{proposition}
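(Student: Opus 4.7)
The plan is to unpack the two sides and match them piece by piece using the fact that $R$ itself is already $\grad^S$-graded, so the associated graded of $R$ with respect to the decreasing filtration $R_{\ge m}$ is canonically identified with $R$: the class of a monomial of $\grad^S$-degree exactly $m$ in $R_{\ge m}/R_{\ge m+1}$ corresponds to that monomial sitting in the $m$th homogeneous component $R^{(m)}$ of $R$. Under this identification the task reduces to computing the $m$th graded piece of $\mathcal P = R/I$ (with the inherited filtration $\mathcal P_m = (R_{\ge m}+I)/I$) inside $R^{(m)}$ and recognizing the answer as $R^{(m)}/(\initial_{\grad^S}I)^{(m)}$.

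First I would write down the short exact sequence
\[
0\to \frac{R_{\ge m}+(R_{\ge m-1}\cap I)}{R_{\ge m}}\to\frac{R_{\ge m-1}}{R_{\ge m}}\to\frac{\mathcal P_{m-1}}{\mathcal P_m}\to 0,
\]
so that the middle term is $R^{(m-1)}$ and the left term lives in $R^{(m-1)}$ as the image of $R_{\ge m-1}\cap I$ under the projection sending each $f\in R_{\ge m-1}$ to its $\grad^S$-component of degree $m-1$. For $f\in R_{\ge m-1}\cap I$ this projection yields precisely $\initial_{\grad^S}f$ when $\grad^S$ of $f$ equals $m-1$, and zero when it is strictly larger; in either case the result lies in $(\initial_{\grad^S}I)^{(m-1)}$. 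Conversely every $\grad^S$-homogeneous element of $\initial_{\grad^S}I$ of degree $m-1$ arises as $\initial_{\grad^S}f$ for some $f\in I$ of that minimum degree, so it is hit. Thus the left term equals $(\initial_{\grad^S}I)^{(m-1)}\subset R^{(m-1)}$, which gives the required isomorphism of graded vector spaces after summing over $m$.

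Finally I would check multiplicativity: the filtration $R_{\ge m}$ satisfies $R_{\ge m}R_{\ge m'}\subset R_{\ge m+m'}$ because $\grad^S$ is additive on monomials, so the induced multiplication on $\mathcal P^S$ agrees with the one on $R/\initial_{\grad^S}I$ under the bijection above. Compatibility with the $\bZ$-grading is automatic.

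The main obstacle is the identification of the kernel in the middle step, namely the claim that the image of $R_{\ge m-1}\cap I$ in $R^{(m-1)}$ is precisely $(\initial_{\grad^S}I)^{(m-1)}$ rather than something smaller. The inclusion into the initial ideal is immediate from the definition of an initial form, but the reverse inclusion requires noting that $\initial_{\grad^S}I$ is \emph{spanned} by initial forms of elements of $I$, and that every $\grad^S$-homogeneous summand of an arbitrary element of $\initial_{\grad^S}I$ is itself the initial form of some element of $I$ of matching minimum degree—a point worth making explicitly to avoid circularity.
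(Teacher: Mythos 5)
Your argument is correct and takes essentially the same route as the paper: identify the associated graded of $R$ with $R$ itself and show that the kernel of the induced surjection onto $\mathcal P^S$ in each degree is exactly the corresponding graded piece of $\initial_{\grad^S}I$. The one subtlety you rightly flag at the end (that a homogeneous element of $\initial_{\grad^S}I$ is itself the initial form of a single element of $I$) is exactly the content of the paper's final sentence, so nothing is missing.
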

\begin{proof}
The associated graded algebra of $R$ with respect to the filtration $(\cdot)_{\ge m}$ is again $R$ with the same grading $\grad^S$. This associated graded algebra, however, projects naturally onto $\mathcal P^S$. We obtain a surjection of $\bZ$-graded algebras from $R$ onto $\mathcal P^S$ and are left to show that the kernel of this surjection is $\initial_{\grad^S}I$. 

A $\grad^S$-homogeneous element $p\in R$ lies in this kernel if and only if the image of $p$ in $\mathcal P$ is contained in $\mathcal P_{\ge\grad^S(p)+1}$. This happens if and only if for some $q\in I$ we have $p+q\in R_{\ge \grad^S(p)+1}$. The latter condition is equivalent to $p$ being the initial part of $-q$ and we see that the kernel is spanned by the initial parts of elements of $I$.
\end{proof}

In particular, we obtain isomorphisms between the $\wt$-homogeneous components, i.e.\ we have identified every $\wt$-homogeneous component $\mathcal P^S_\la$ of a Gr\"obner degeneration of the Pl\"ucker algebra with a certain associated graded space of the dual irreducible representation.

\begin{remark}
Isomorphisms between quotients by initial ideals and associated graded rings are a very general phenomenon. A special case important for algebraic geometry is the filtration by the powers of an ideal (see~\cite[Proposition 15.28]{E}), for a setting closer to ours see~\cite[Lemma 3.4]{KaM}. This phenomenon will also reappear in this paper as Proposition~\ref{adinitdegen}. However, somewhat surprisingly, the author was not able to find this proved in the literature as a general fact that would directly imply either of the two propositions, see also MathOverflow question~\cite{MO}.
\end{remark}

Now, for an integral dominant weight $\la=(a_1,\ldots,a_{n-1})$ consider the tensor product \[U_\la=L_{\omega_1}^{\otimes a_1}\otimes\ldots\otimes L_{\omega_{n-1}}^{\otimes a_{n-1}}.\] The subrepresentation of $U_\la$ generated by the highest weight vector \[u_\la=v_{\omega_1}^{\otimes a_1}\otimes\ldots\otimes v_{\omega_{n-1}}^{\otimes a_{n-1}}\] is the irreducible representation $L_\la$ (naturally dual to $\mathcal P_\la$).

If we grade $L_{\om_k}$ by setting $\grad^S e_{i_1,\ldots,i_k}=s_{i_1,\ldots,i_k}$, a grading (which we also denote $\grad^S$) on $U_\la$ is induced. We may consider an increasing $\bZ$-filtration on $U_\la$ with the $m$th component $(U_\la)_{\le m}$ being spanned by $\grad^S$-homogeneous elements of $\grad^S$ no greater than $m$. This induces a $\bZ$-filtration on $L_\la\subset U_\la$ with components $(L_\la)_m$, denote the associated graded space $L_\la^S$ with homogeneous components $(L_\la^S)_m$. %For every tensor product of the vectors $e_{i_1,\ldots,i_k}$ in $U_\la$ we have the corresponding product of the variables $X_{i_1,\ldots,i_k}$ in $R_\la$. Since the corresponding set of products is a basis in each of $U_\la$ and $R_\la$, we can view the two spaces as mutually dual. This duality identifies the gradings $\grad^S$, hence the abuse of notations.
\begin{proposition}\label{dualspaces}
 $L_\la^S$ and $\mathcal P_\la^S$ are dual as $\bZ$-graded vector spaces.
\end{proposition}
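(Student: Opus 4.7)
The plan is to derive the graded duality by degenerating the canonical perfect pairing $L_\la\otimes\mathcal P_\la\to\bC$ (the one realizing the identification $\mathcal P_\la\cong L_\la^*$). To keep track of $\grad^S$ I would lift this to the natural pairing $U_\la\otimes R_\la\to\bC$ coming from the symmetrization inclusion $R_\la=\bigotimes_k\Sym^{a_k}(L_{\om_k}^*)\hookrightarrow U_\la^*$. Because on corresponding basis vectors $\grad^S X_{i_1,\ldots,i_k}=\grad^S e_{i_1,\ldots,i_k}=s_{i_1,\ldots,i_k}$, the lifted pairing is diagonal with respect to $\grad^S$: an element of $(R_\la)_a$ annihilates $(U_\la)_b$ whenever $a\neq b$. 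In particular $(R_\la)_{\ge m}$ annihilates $(U_\la)_{\le m-1}$, and restricting to $L_\la\subset U_\la$ and descending modulo $I_\la$ (which is exactly the annihilator of the Cartan component $L_\la$ in $R_\la$) yields the containment $(\mathcal P_\la)_m\subset((L_\la)_{m-1})^{\perp}$ inside $\mathcal P_\la$.

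The second step is to upgrade this containment to equality, at which point a standard linear-algebra argument promotes the descended pairing to a perfect pairing between the associated gradeds. For this I would use Proposition~\ref{initdegen} to compute $\dim\mathcal P_\la^S=\dim\mathcal P_\la=\dim L_\la=\dim L_\la^S$. The containment rewrites as $\dim(\mathcal P_\la)_m+\dim(L_\la)_{m-1}\le\dim L_\la$, and differencing at consecutive $m$ gives $\dim((\mathcal P_\la)_m/(\mathcal P_\la)_{m+1})\le\dim((L_\la)_m/(L_\la)_{m-1})$; since both sequences telescope to the common total $\dim L_\la$, these termwise inequalities must all be equalities. This forces $(\mathcal P_\la)_m=((L_\la)_{m-1})^{\perp}$ for every $m$, so the descended pairing is nondegenerate on each homogeneous component and thereby identifies $L_\la^S$ and $\mathcal P_\la^S$ as dual $\bZ$-graded vector spaces.

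The main source of care, rather than a genuine obstacle, is indexing bookkeeping: matching the increasing indexing of $(L_\la)_m$ against the decreasing indexing of $(\mathcal P_\la)_m$ and the paper's convention $(\mathcal P_\la^S)_m=(\mathcal P_\la)_{m-1}/(\mathcal P_\la)_m$, so that the grading shift between the two associated gradeds is reflected correctly in the duality statement. The substantive ingredients—graded orthogonality of the pairing and the dimension match via Proposition~\ref{initdegen}—are both elementary.
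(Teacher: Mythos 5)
Your first paragraph is essentially the paper's setup (the paper phrases it via the perfect graded pairing between $R_\la$ and $W_\la=\bigotimes_k\Sym^{a_k}(L_{\om_k})\subset U_\la$, but that is the same thing), and the containment $(\mathcal P_\la)_m\subset((L_\la)_{m-1})^{\perp}$ is correct. The gap is in the second paragraph. Writing $d=\dim L_\la$, $p_m=\dim(\mathcal P_\la)_m$, $l_m=\dim(L_\la)_m$, the containments give the upper bounds $p_m\le d-l_{m-1}$ and $p_{m+1}\le d-l_m$; to deduce $p_m-p_{m+1}\le l_m-l_{m-1}$ you would need a \emph{lower} bound $p_{m+1}\ge d-l_m$, which is precisely the reverse containment you are trying to prove. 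So the ``differencing'' step does not follow. Moreover, no amount of telescoping can rescue it: a one-sided orthogonality containment plus equality of total dimensions genuinely does not force duality of associated gradeds. For instance, let $L=\bC u\oplus\bC v$ with increasing filtration $(L)_0=\bC u$, $(L)_1=L$, and filter $L^*$ decreasingly by $(L^*)_m=L^*$ for $m\le 0$ and $(L^*)_m=0$ for $m\ge 1$; then $(L^*)_m\subset((L)_{m-1})^{\perp}$ for all $m$ and the total dimensions agree, yet $\gr L^*$ is concentrated in a single degree while $\gr L$ is not. Hence the equality $(\mathcal P_\la)_m=((L_\la)_{m-1})^{\perp}$ is the actual content of the proposition and needs an independent argument.

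The paper supplies exactly the missing input by working one level up: the pairing between $R_\la$ and $W_\la$ is perfect and $\grad^S$-diagonal, so the filtration components $(W_\la)_{\le m}$ and $(R_\la)_{\ge m+1}$ are \emph{exact} mutual orthogonals (equality, not just containment, because each graded piece of $R_\la$ pairs perfectly with the corresponding graded piece of $W_\la$). This gives a perfect pairing between the associated graded ambient spaces, which are again $R_\la$ and $W_\la$. The proposition then reduces to showing that the kernel $\initial_{\grad^S}I_\la$ of $R_\la\twoheadrightarrow\mathcal P^S_\la$ equals the orthogonal of the subspace $L_\la^S\subset W_\la$: the containment $\initial_{\grad^S}I_\la\subset(L_\la^S)^{\perp}$ is checked directly (an initial part of $q\in I_\la$ annihilates the leading projection of any $v\in L_\la$, and such projections span $L_\la^S$), and equality follows from $\dim\initial_{\grad^S}I_\la=\dim I_\la$, $\dim L_\la^S=\dim L_\la$ and $L_\la=I_\la^{\perp}$. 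Your argument can be completed along these lines; the point is that you must use the two-sided duality of the ambient filtrations on $R_\la$ and $W_\la$ (or $U_\la$), not only the induced one-sided containment on the quotient and the subspace.
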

\begin{proof}
Let us consider the subrepresentation \[W_\la=\Sym^{a_1}(L_{\om_1})\otimes\ldots\otimes\Sym^{a_{n-1}}(L_{\om_{n-1}})\subset U_\la,\] note that $W_\la$ is a graded subspace and $L_\la\subset W_\la$. The space $W_\la$ is dual to $R_\la$ where the symmetrization of a tensor product of some vectors $e_{i_1,\ldots,i_k}$ is the basis element dual to the product of the corresponding variables $X_{i_1,\ldots,i_k}$. The increasing $\bZ$-filtration on $W_\la$ (given by $(W_\la)_{\le m}=(U_\la)_{\le m}\cap W_\la$) is dual to the decreasing $\bZ$-filtration on $R_\la$ in the sense that the subspace $(W_\la)_{\le m}$ is dual to the subspace $(R_\la)_{\ge m+1}$. This provides a duality between their associated graded spaces which are again $W_\la$ and $R_\la$ with the same gradings. The space $L_\la^S$ is, by definition, embedded into the associated graded space $W_\la$ and the space $\mathcal P_\la^S$ is (as noted in the proof of Proposition~\ref{initdegen}) a projection of the associated graded space $R_\la$. We are to show that the kernel $\initial_{\grad^S} I_\la$ of the latter projection is the orthogonal of the subspace $L_\la^S\subset W_\la$. 

Now, it is known (see, for instance, Theorem 8.2.2 and Proposition 9.1.1 in~\cite{fulton}) that $L_\la\subset W_\la$ is the orthogonal of the kernel $I_\la$ of the projection of $R_\la$ onto $\mathcal P_\la$. However, if we consider an element of $v\in L_\la\subset W_\la$ and take its projection onto the $\grad^S$-homogeneous component of maximal grading for which the projection is nonzero we will obtain an element of $L_\la^S\subset W_\la$ and $L_\la^S$ is spanned by elements of this form. By definition $\initial_{\grad^S} I_\la$ is spanned by the $\grad^S$-initial parts of elements of $I_\la$. One sees that such an initial part annihilates the mentioned projection in $L_\la^S$ and the orthogonality follows.
\end{proof}

As discussed above, we have an embedding $L_\la^S\subset W_\la\subset U_\la$. We also have the Segre embedding $$\mathbb P(L_{\om_1})^{a_1}\times\ldots\times\mathbb P(L_{\om_{n-1}})^{a_{n-1}}\subset\mathbb P(U_\la)$$ and, for regular $\la$ (i.e.\ all $a_k>0$), the embedding $$\mathbb P\subset \mathbb P(L_{\om_1})^{a_1}\times\ldots\times\mathbb P(L_{\om_{n-1}})^{a_{n-1}}$$ where $\mathbb P(L_{\om_k})$ is embedded diagonally into $\mathbb P(L_{\om_k})^{a_k}$. We obtain an embedding \[F^S\subset\bP\subset\bP(U_\la).\]
\begin{proposition}\label{FSinPLS}
For a regular $\la$ the image of $F^S$ under this embedding is contained in $\bP(L_\la^S)\subset\bP(U_\la)$.
\end{proposition}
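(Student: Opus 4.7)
The plan is to reduce everything to the duality $R_\la \leftrightarrow W_\la$ set up in the proof of Proposition~\ref{dualspaces}, which identifies $(L_\la^S)^\perp \subset W_\la^*$ with $(\initial_{\grad^S} I)_\la \subset R_\la$, and then invoke the fact that $F^S$ is by definition cut out by $\initial_{\grad^S} I$.

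First I would observe that, although the composition $\bP \subset \bP(L_{\om_1})^{a_1} \times \ldots \times \bP(L_{\om_{n-1}})^{a_{n-1}} \subset \bP(U_\la)$ a priori lands in $\bP(U_\la)$, the use of the diagonal embeddings $\bP(L_{\om_k}) \hookrightarrow \bP(L_{\om_k})^{a_k}$ forces the image to lie in $\bP(W_\la)$: the homogeneous coordinate of a point $x\in\bP$ on the tensor factor $e_{\mathbf i_1}\otimes\dots\otimes e_{\mathbf i_m}\in U_\la$ equals the product $X_{\mathbf i_1}(x)\cdots X_{\mathbf i_m}(x)$, which is symmetric under permutation of the factors coming from each $L_{\om_k}^{\otimes a_k}$. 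Under the identification $W_\la^* = R_\la$ from the proof of Proposition~\ref{dualspaces}, the coordinate on $\bP(W_\la)$ corresponding to the symmetrization of this tensor is precisely the monomial $X_{\mathbf i_1}\cdots X_{\mathbf i_m}\in R_\la$, and its value at the image of $x$ is the same product.

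Consequently, a point of the image of $\bP$ in $\bP(U_\la)$ lies in $\bP(L_\la^S)$ if and only if every linear functional in $(L_\la^S)^\perp\subset W_\la^*=R_\la$ vanishes at that point, where the pairing is evaluation of the corresponding polynomial in Pl\"ucker coordinates. By the orthogonality statement established at the end of the proof of Proposition~\ref{dualspaces}, this orthogonal subspace is exactly $(\initial_{\grad^S} I)_\la$. Thus $\bP(L_\la^S)\cap\bP$, viewed as a subvariety of $\bP$, is cut out by the degree-$\la$ part of the initial ideal.

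To finish, I would simply note that by the definition of $F^S$ as the zero locus of $\initial_{\grad^S} I$ in $\bP$, every element of $(\initial_{\grad^S} I)_\la$ vanishes on $F^S$. Hence the image of $F^S$ in $\bP(U_\la)$ is contained in $\bP(L_\la^S)$. The only delicate point is keeping track of the chain of identifications $W_\la^* \cong R_\la$, $(L_\la^S)^\perp \cong (\initial_{\grad^S} I)_\la$, and the compatibility of the symmetrization pairing with polynomial evaluation on $\bP$; regularity of $\la$ is used solely to ensure that $\bP$ itself embeds into $\bP(U_\la)$ via the diagonal, so that the statement even makes sense.
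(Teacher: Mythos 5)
Your argument is correct and is essentially the paper's own proof: reduce to the duality $W_\la^*\cong R_\la$ from Proposition~\ref{dualspaces}, note that the image of $\bP$ lands in $\bP(W_\la)$, and check that the functionals in $(L_\la^S)^\perp=(\initial_{\grad^S}I)_\la$ vanish on $F^S$ because evaluation of these functionals is evaluation of the corresponding polynomials in Pl\"ucker coordinates. The paper dismisses this last verification as ``straightforward from the definitions''; you have merely written it out.
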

\begin{proof}
The image of the Segre embedding and, therefore, of $F^S$ lies in $\bP(W_\la)$. In view of Proposition~\ref{dualspaces} and its proof, to show that a point in $F^S$ is contained in $\bP(L_\la^S)$ we are to show that the corresponding line in $W_\la$ is annihilated by every element of $\initial_{\grad^S} I_\la\subset R_\la$ (where these elements are viewed as functionals on $W_\la$). This, however, is straightforward from the definitions.
\end{proof}
The above proposition provides an embedding of $F^S$ into $\bP(L_\la^S)$. When the degeneration is trivial, i.e.\ all $s_{i_1,\ldots,i_k}=0$ and $F^S=F$, we obtain the usual embedding of $F$ into $\bP(L_\la)$ as the closure of the orbit of the point corresponding to $\bC v_\la$ under the action of the group $\exp(\fn_-)$.

Proposition~\ref{FSinPLS} is stated only for regular $\la$ since a version of this theorem for singular $\la$ would concern degenerations of partial flag varieties rather than $F$. Sections~\ref{gtdegens} and~\ref{degenact} will contain more results concerned only with complete flag varieties and/or regular highest weights. This is, again, done to avoid overcomplicating the notations in these key sections. However, partial flag varieties and singular highest weights will be discussed in Section~\ref{singular} and the corresponding generalizations of the results will be given there.

Furthermore, all the results found here as well as in the sections below can be formulated for real (rather than integer) gradings. The reason for us to assume that $s_{i_1,\ldots,i_k}\in\bZ$ is that the real case would require us to consider spaces graded by the semigroup generated by the $s_{i_1,\ldots,i_k}$ rather than by $\bZ$. This would complicate the notations with virtually no gain in mathematical merit. The only disadvantage integer gradings give us is not being able to work directly with cones in the Gr\"obner fan in the proof of Theorem~\ref{maxcone}, only with their sets of integer points. This, however, is easily circumvented.

\begin{remark}\label{projlim}
For integral dominant weights $\la$ and $\la'$ such that $\la-\la'$ is also dominant we have a surjection $L_\la\mapsto L_{\la'}$ of $\mU(\fn_-)$-modules. This is a projective system with projective limit $\mU(\fn_-)$. Now, setting $s_{i_1,\ldots,i_{k}}:=s_{i_1,\ldots,i_{k}}-s_{1,\ldots,k}$ for all tuples $i_1,\ldots,i_{k}$ does not change the degeneration $F^S$, therefore we may assume that all $s_{1,\ldots,k}=0$. With this assumption in place one can see that the said projective system respects the $\bZ$-filtrations on the $L_\la$ in the sense that it induces an increasing $\bZ$-filtration on the limit $\mU(\fn_-)$. 

We will recover an explicit description of this filtration on $\mU(\fn_-)$ for the case of Gelfand--Tsetlin degenerations in Theorem~\ref{Lfiltration}. However, it would be interesting to see a more direct definition of this filtration in the general case.
\end{remark}

\section{Polytopes and monomial bases}\label{bases}

Consider $\Theta=\bR^{\{1\le i<j\le n\}}$, for a point $T\in\Theta$ denote its coordinates $T_{i,j}$. For each tuple $1\le i_1<\ldots<i_k\le n$ with $1\le k\le n-1$ we define a vector $T(i_1,\ldots,i_k)\in\Theta$ coordinatewise by setting
\begin{equation}
T(i_1,\ldots,i_k)_{l,m}=
\begin{cases}
1\text{ if }l\le k\text{ and }m=i_l,\\
0\text{ otherwise}. 
\end{cases}
\end{equation}
In other words, we have the coordinate corresponding to pair $(l,i_l)$ equal to $1$ for every $1\le l\le k$ with $i_l>l$ and all other coordinates zero.

For a $1\le k\le n-1$ let $\Pi_{\om_k}\subset\Theta$ be the set of all $T(i_1,\ldots,i_k)$. Next, for an integral dominant weight $\la=(a_1,\ldots,a_{n-1})$ consider the Minkowski sum 
\begin{equation}\label{minksum}
\Pi_\la=\underbrace{\Pi_{\om_1}+\ldots+\Pi_{\om_1}}_{a_1}+\ldots+\underbrace{\Pi_{\om_{n-1}}+\ldots+\Pi_{\om_{n-1}}}_{a_{n-1}}.
\end{equation}
We also introduce a convex lattice polytope $P_\la\subset\Theta$ consisting of points $T$ such that
\begin{enumerate}[label=(\roman*)]
\item $T_{i,j}\ge0$ for all $1\le i<j\le n$;
\item $\sum_{l=j}^n T_{i,l}-\sum_{l=j+1}^n T_{i+1,l}\le a_i$ for all $1\le i<j\le n$.
\end{enumerate}
The second sum in (ii) is empty if $j=n$.

Before we proceed, let us recall the definition of Gelfand--Tsetlin polytopes introduced in~\cite{GT}. For each integral dominant weight $\la=(a_1,\ldots,a_{n-1})$ the corresponding GT polytope $GT_\la$ is a convex lattice polytope in $\Theta$ composed of points $T$ such that
\begin{enumerate}[label=(\roman*)]
\setcounter{enumi}{2}
\item $\la_i\ge T_{i,i+1}\ge\la_{i+1}$ for all $1\le i\le n-1$ where $\la_i=a_i+\ldots+a_{n-1}$ and $\la_n=0$;
\item $T_{i,j-1}\ge T_{i,j}\ge T_{i+1,j}$ for all pairs $1\le i<j\le n$ with $j>i+1$.
\end{enumerate}
Let $\Gamma_\la$ denote the set of integer points in $GT_\la$, elements of $\Gamma_\la$ are known as \emph{Gelfand--Tsetlin patterns}. A key property of GT polytopes established in~\cite{GT} is that $\Gamma_\la$ enumerates a basis in $L_\la$, hence $|\Gamma_\la|=\dim L_\la$. We will also make use of the following {\it Minkowski sum property}.
\begin{proposition}\label{gtminkowski}
For integral dominant weights $\la$ and $\mu$ the Minkowski sum $\Gamma_\la+\Gamma_\mu$ coincides with $\Gamma_{\la+\mu}$.
\end{proposition}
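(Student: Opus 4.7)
The plan is to prove the two inclusions separately. For the easy direction $\Gamma_\la+\Gamma_\mu\subseteq\Gamma_{\la+\mu}$, observe that the defining inequalities (iii) and (iv) are linear in both $T$ and the weight: if $T'\in\Gamma_\la$ and $T''\in\Gamma_\mu$, then adding them coordinatewise preserves integrality and produces a pattern satisfying the inequalities for $\la+\mu$, the first-column bounds adding correctly because $(\la+\mu)_i=\la_i+\mu_i$.

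For the reverse inclusion $\Gamma_{\la+\mu}\subseteq\Gamma_\la+\Gamma_\mu$, I would argue by induction on $|\mu|=\sum b_k$. Writing $\mu$ as a sum of fundamental weights, this reduces to showing $\Gamma_{\la+\om_k}\subseteq\Gamma_\la+\Gamma_{\om_k}$ for arbitrary dominant $\la$ and any fundamental weight $\om_k$: once this is known, iteratively peeling off one $\om_k$ at a time yields the general statement, using the base step at each iteration together with the previously established inclusion $\Gamma_{\la'}+\Gamma_{\om_k}\subseteq\Gamma_{\la'+\om_k}$ run in reverse.

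So fix $T\in\Gamma_{\la+\om_k}$; the task is to exhibit a $0/1$ GT pattern $T''\in\Gamma_{\om_k}$ such that $T-T''\in\Gamma_\la$. The condition (iii) applied to $\om_k$ rigidly prescribes the first column of $T''$ (namely $T''_{i,i+1}=1$ for $i<k$, $T''_{i,i+1}=0$ for $i>k$, and $T''_{k,k+1}\in\{0,1\}$), and then the monotonicity (iv) together with the $0/1$ constraint determines a ``staircase'' of $1$'s whose shape is parametrized by a subset of size $k$. My plan is to build this staircase greedily row by row from the top, at each row selecting the rightmost column $j$ at which $T''_{i,j}=1$ can be placed while keeping $T-T''$ within the bounds imposed on $\Gamma_\la$ by (iii) and (iv).

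The main obstacle is showing this greedy procedure is always consistent: the admissible $0/1$ choice for each $T''_{i,j}$ must exist, and the resulting difference $T-T''$ must satisfy (iii) and (iv) with weight $\la$. This ultimately reduces to a local check in which one uses that every first-column interval $[(\la+\om_k)_{i+1},(\la+\om_k)_i]$ for $T_{i,i+1}$ splits as a Minkowski sum of the intervals $[\la_{i+1},\la_i]$ and $[(\om_k)_{i+1},(\om_k)_i]$, while the monotonicity (iv), being shift-invariant in the weight, imposes no additional obstruction to such a split. The combinatorial heart of the argument is that the staircase of $1$'s in $T''$ can be threaded through the pattern $T$ in a way consistent with both sets of inequalities at once; I would verify this by a straightforward case analysis on whether the greedy threshold advances or stays from one row to the next.
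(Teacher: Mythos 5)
Your overall strategy (the easy inclusion by linearity of the defining inequalities, reduction to $\mu=\om_k$ by induction on $|\mu|$, then a greedy construction of the $0/1$ summand) is the standard direct route, and it is worth noting that the paper itself does not prove this proposition but cites \cite[Theorem 2.5]{chainorder}; a self-contained argument is therefore welcome. However, as written there is a genuine gap exactly where all the content of the statement sits: you never establish that an admissible staircase choice exists at every row. Concretely, write $T\in\Gamma_{\la+\om_k}$ as rows $x_{r,1}\ge\ldots\ge x_{r,n+1-r}$ (with $x_{1,i}=(\la+\om_k)_i$ the fixed top row) and encode $T''\in\Gamma_{\om_k}$ by the lengths $c_1=k\ge c_2\ge\ldots\ge c_n$ of the prefixes of $1$'s in its rows, where $c_r-c_{r+1}\in\{0,1\}$. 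Passing from row $r$ to row $r+1$ one checks that the choice $c_{r+1}=c_r-1$ violates the interlacing of $T-T''$ precisely when $x_{r,c_r}=x_{r+1,c_r}$, while the choice $c_{r+1}=c_r$ violates it precisely when $x_{r+1,c_r}=x_{r,c_r+1}$. If both failed simultaneously one would get $x_{r,c_r}=x_{r,c_r+1}$, whence row $r$ of the already-constructed partial pattern $T-T''$ (which subtracts $1$ in position $c_r$ and $0$ in position $c_r+1$) would strictly increase from position $c_r$ to $c_r+1$ --- contradicting the weak decrease of that row, which is guaranteed by the interlacing already established between rows $r-1$ and $r$. This invariant is what closes the induction; it also shows that \emph{any} locally admissible choice extends, so no backtracking is needed. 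None of this appears in your write-up: you state that consistency "reduces to a local check" and "a straightforward case analysis" but do not perform it.

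Moreover, the one substantive claim you do make about the hard step --- that "the monotonicity (iv), being shift-invariant in the weight, imposes no additional obstruction to such a split" --- is misleading at best. The tight instances of (iv) in $T$ (the equalities $x_{r,i}=x_{r+1,i}$ and $x_{r+1,i}=x_{r,i+1}$) are exactly what constrains where the $1$'s of $T''$ may be placed, and the entire difficulty is to show that these constraints are never simultaneously unsatisfiable. So the plan is correct and would differ from the paper only in being self-contained, but the reverse inclusion is asserted rather than proved; you need to supply the local case analysis and the weak-decrease invariant sketched above, or else simply cite \cite[Theorem 2.5]{chainorder} as the paper does.
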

\begin{proof}
A proof can, for instance, be found in~\cite[Theorem 2.5]{chainorder} in the much more general context of marked chain-order polytopes of which the GT polytopes are a special case.
\end{proof}

\begin{lemma}\label{gtequiv}
$\Pi_\la$ is the set of integer points in $P_\la$. Furthermore, $P_\la$ is unimodularly equivalent to the Gelfand--Tsetlin polytope $GT_\la$. 
\end{lemma}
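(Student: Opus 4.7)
The plan is to produce an explicit affine unimodular bijection $\Phi_\la\colon\Theta\to\Theta$ carrying $P_\la$ onto $GT_\la$ and then deduce the integer-point statement from Proposition~\ref{gtminkowski}. The map I would use is
\[
\Phi_\la(T)_{i, j} = \la_i - \sum_{l = n + 1 - j + i}^{n} T_{i, l}
\qquad (1 \le i < j \le n).
\]
This is affine, and its linear part is block-diagonal (one block per value of the first coordinate $i$) with each block triangular and having $\pm 1$ on the diagonal, so $\Phi_\la$ is a unimodular affine isomorphism. Its integer inverse is $T_{i, l} = \Phi_\la(T)_{i, n - l + i} - \Phi_\la(T)_{i, n + 1 - l + i}$ under the convention $\Phi_\la(T)_{i, i} = \la_i$.

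Next I would verify that $\Phi_\la$ matches the defining inequalities of $P_\la$ with those of $GT_\la$. A short calculation yields, for $j > i + 1$,
\[
\Phi_\la(T)_{i, j-1} - \Phi_\la(T)_{i, j} = T_{i, n + 1 - j + i}
\]
and
\[
\Phi_\la(T)_{i, j} - \Phi_\la(T)_{i+1, j} = a_i - \sum_{l = n + 1 - j + i}^{n} T_{i, l} + \sum_{l = n + 2 - j + i}^{n} T_{i+1, l},
\]
so the two halves of (iv) translate into conditions (i) and (ii) of $P_\la$ at index $(i, n + 1 - j + i)$. For the boundary case $j = i + 1$ one similarly obtains $\la_i - \Phi_\la(T)_{i, i+1} = T_{i, n}$ and $\Phi_\la(T)_{i, i+1} - \la_{i+1} = a_i - T_{i, n}$, so the two halves of (iii) translate into (i) and (ii) at $(i, n)$. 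Since $P_\la$ and $GT_\la$ each have $2\binom{n}{2}$ defining inequalities, this establishes the unimodular equivalence.

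For the integer-point claim, I would first check directly that each $T(i_1, \ldots, i_k)$ is an integer point of $P_{\om_k}$: condition (i) is automatic, and the left-hand side of (ii) at $(l, j)$ equals $[j \le i_l] - [j \le i_{l+1} - 1] \in \{-1, 0\}$ when $l < k$, equals $[j \le i_k] \le 1 = a_k$ when $l = k$, and vanishes when $l > k$. Combined with $|\Pi_{\om_k}| = \binom{n}{k} = \dim L_{\om_k} = |\Gamma_{\om_k}|$ and the injectivity of $\Phi_{\om_k}$ on integer points, this forces $\Pi_{\om_k}$ to exhaust the integer points of $P_{\om_k}$.

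Finally, for general $\la = \sum a_k \om_k$ I would exploit the affine identity $\Phi_{\la + \mu}(x + y) = \Phi_\la(x) + \Phi_\mu(y)$, which is immediate because the linear part of $\Phi$ does not depend on the weight. Combined with Proposition~\ref{gtminkowski}, this shows that every integer point of $P_{\la + \mu}$ decomposes as a sum of integer points from $P_\la$ and $P_\mu$; iterating and inserting the fundamental-weight case produces $\Pi_\la$ as the full set of integer points of $P_\la$. The main obstacle is finding $\Phi_\la$ in the first place---its form is essentially forced by matching the extremal cases of (ii) with the lower bound in (iii)---after which the rest is routine verification.
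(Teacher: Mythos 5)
Your proof is correct and essentially identical to the paper's: your map $\Phi_\la$ is exactly the paper's $\psi$ from equation~(\ref{psidef}), the inequality matching is the same computation, and the integer-point claim is handled the same way (direct check plus the count $\binom{n}{k}$ for fundamental weights, then Proposition~\ref{gtminkowski} for general $\la$).
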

\begin{example}\label{sl3polytopes}
Before we proceed with the proof let us illustrate the definitions and the Lemma with an example. Let $n=3$, we visualize $T\in\Theta$ as ${T_{1,2}\,T_{2,3}}\atop{T_{1,3}}$. We have \[\Pi_{\om_1}=\left\{T(1)={{0\,0}\atop0},T(2)={{1\,0}\atop0},T(3)={{0\,0}\atop1}\right\}\] and \[\Pi_{\om_2}=\left\{T(1,2)={{0\,0}\atop0},T(1,3)={{0\,1}\atop0},T(2,3)={{1\,1}\atop0}\right\}.\] For $\la=\om_1+\om_2$ we obtain \[\Pi_\la=\left\{{{0\,0}\atop0},{{0\,1}\atop0},{{1\,1}\atop0},{{1\,0}\atop0},{{2\,1}\atop0},{{0\,0}\atop1},{{0\,1}\atop1},{{1\,1}\atop1}\right\}.\] We see that $\Pi_{\om_1}$ is the set of integer points in the polytope $P_{\om_1}$ defined by the inequalities $T_{i,j}\ge 0$, $T_{1,2}+T_{1,3}-T_{2,3}\le 1$, $T_{1,3}\le 1$ and $T_{2,3}\le 0$. Polytope $P_{\om_2}$ is given by $T_{i,j}\ge 0$, $T_{1,2}+T_{1,3}-T_{2,3}\le 0$, $T_{1,3}\le 0$ and $T_{2,3}\le 1$ and $P_{\la}$ is given by $T_{i,j}\ge 0$, $T_{1,2}+T_{1,3}-T_{2,3}\le 1$, $T_{1,3}\le 1$ and $T_{2,3}\le 1$. 

Now, we have $(\la_1,\la_2,\la_3)=(2,1,0)$ and the set $\Gamma_\la$ is seen to be \[\left\{{{2\,1}\atop2},{{2\,0}\atop2},{{2\,0}\atop1},{{2\,1}\atop1},{{2\,0}\atop0},{{1\,1}\atop1},{{1\,0}\atop1},{{1\,0}\atop0}\right\}.\] Consider the affine transformation $\psi$ of $\Theta$ given by $\psi:T\mapsto{{2-T_{1,3}\:1-T_{2,3}}\atop{2-T_{1,2}-T_{1,3}}}$. Observe that $\psi(\Pi_\la)=\Gamma_\la$. Moreover, if one takes the inequalities defining $GT_\la$ (i.e.\ $2\ge T_{1,2}\ge 1\ge T_{2,3}\ge 0$ and $T_{1,2}\ge T_{1,3}\ge T_{2,3}$) and substitutes every occurrence of $T_{i,j}$ with the expression $\psi(T)_{i,j}$ defined above, one will end up with the 6 inequalities defining $P_\la$. For instance, $2\ge T_{1,2}$ turns into $2\ge 2-T_{1,3}\Leftrightarrow T_{1,3}\ge 0$ or $T_{1,3}\ge T_{2,3}$ turns into $2-T_{1,2}-T_{1,3}\ge 1-T_{2,3}\Leftrightarrow T_{1,2}+T_{1,3}-T_{2,3}\le 1$. To prove the Lemma we generalize this map $\psi$.
\end{example}
\begin{proof}[Proof of Lemma~\ref{gtequiv}]
%It is evident from the definitions that for integral dominant weights $\la$ and $\mu$ we have both $P_\la+P_\mu=P_{\la+\mu}$ and $GT_\la+GT_\mu=GT_{\la+\mu}$. Therefore, in view of Proposition~\ref{gtminkowski}, it suffices to show that (1) $\Pi_{\om_k}$ is the set of integer points in $P_{\om_k}$ for all $1\le k\le n-1$ and (2) that there exists a linear unimodular automorphism $\psi$ of $Theta$ that preserves the lattice of integer points and identifies $P_{\om_k}$ with $GT_{\om_k}$ for all $1\le k\le n-1$. 
%
%The fact that $\Pi_{\om_k}\subset P_{\om_k}$ is immediate from the definitions. The fact that $P_{\om_k}$ has exactly $n\choose k$ integer points and therefore has no integer points outside of $\Pi_{\om_k}$ will follow from (2). Let us turn to defining the automorphism $\psi$. 
%
Consider the affine transformation $\psi$ of $\Theta$ given by 
\begin{equation}\label{psidef}
\psi(T)_{i,j}=\la_i-\sum_{l=i+n+1-j}^n T_{i,l}.
\end{equation}
It is evident that $\psi$ is unimodular and preserves the lattice of integer points, let us show that $\psi(P_\la)=GT_\la$. Indeed, if $j<n$, then the inequality in (i) is equivalent to 
\begin{equation}\label{psi1}
\psi(T)_{i,i+n-j}\ge \psi(T)_{i,i+n+1-j}
\end{equation} 
and the inequality in (ii) is equivalent 
\begin{equation}\label{psi2}
\psi(T)_{i,i+n+1-j}\ge \psi(T)_{i+1,i+n+1-j}.
\end{equation}
(\ref{psi1}) and (\ref{psi2}) combined over all $1\le i<j<n$ give (iv).
If $j=n$, then the inequality in (i) is equivalent to 
\begin{equation}\label{psi3}
\psi(T)_{i,i+1}\le\la_i
\end{equation}
and the inequality in (ii) is equivalent to 
\begin{equation}\label{psi4}
\psi(T)_{i,i+1}\ge\la_{i+1}.
\end{equation}
Combining~(\ref{psi3}) and~(\ref{psi4}) over all $1\le i\le n-1$ gives (iii). This proves the second part of the lemma.

Now, with the second part established, the first part follows from the definition of $\Pi_\la$, Proposition~\ref{gtminkowski} and the claim that $\Pi_{\om_k}$ is the set of integer points in $P_{\om_k}$ for all $1\le k\le n-1$. To verify this last claim note that $\Pi_{\om_k}\subset P_{\om_k}$ is immediate from the definitions and that, in view of the second part, $P_{\om_k}$ has exactly $n\choose k$ integer points and therefore has no integer points outside of $\Pi_{\om_k}$.
\end{proof}

We see that the map $\psi$ provides a bijection between $\Pi_\la$ and the set of GT patterns.
\begin{cor}\label{dimension}
$|\Pi_\la|=\dim L_\la$.
\end{cor}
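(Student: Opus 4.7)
The plan is to observe that Corollary~\ref{dimension} is an immediate consequence of Lemma~\ref{gtequiv} combined with the classical Gelfand--Tsetlin dimension formula. By the first part of Lemma~\ref{gtequiv}, $\Pi_\la$ is exactly the set of integer points of $P_\la$, and by the second part, the affine map $\psi$ defined in~(\ref{psidef}) is a unimodular equivalence $P_\la \to GT_\la$. Since unimodular affine maps preserve the integer lattice, $\psi$ restricts to a bijection between the integer points of $P_\la$ and those of $GT_\la$, that is, between $\Pi_\la$ and $\Gamma_\la$.

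Thus $|\Pi_\la|=|\Gamma_\la|$, and the classical theorem of Gelfand--Tsetlin cited just before Proposition~\ref{gtminkowski} gives $|\Gamma_\la|=\dim L_\la$. Chaining the two equalities yields the corollary.

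There is no real obstacle to overcome here; the work has already been done in establishing Lemma~\ref{gtequiv}. The only thing worth noting explicitly in the writeup is that $\psi$ being unimodular (with integer shift, since $\la_i\in\bZ$) is precisely what is needed to transfer integer-point counts between the two polytopes.
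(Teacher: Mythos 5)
Your argument is correct and is exactly the intended one: the paper states this as an immediate corollary of Lemma~\ref{gtequiv}, whose proof already notes that $\psi$ is unimodular and preserves the lattice of integer points, so $|\Pi_\la|=|\Gamma_\la|=\dim L_\la$ by the classical Gelfand--Tsetlin result. No issues.
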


\begin{remark} 
Lemma~\ref{gtequiv} is immediate from the results found in~\cite[Section 14.4]{MS}, in particular, a variation of our map $\psi$ is also constructed there. We give a self-contained proof for the sake of completeness. The formula~(\ref{psidef}) for $\psi$ is easily derived via additivity with respect to $\la$, a much more involved question is \emph{why} a polytope given by the fairly simple expression~(\ref{minksum}) would turn out to be equivalent to the GT polytope. One possible explanation can be found in~\cite[Section 5]{KM}. 
\end{remark}

Let us now define the aforementioned monomial bases. We make use of the following terminology. We call a monomial $M\in\mU(\fn_{-})$ {\it ordered} if the factors $f_{i,j}$ appearing in M are ordered by $i$ increasing from left to right. For every $T\in\Theta$ with nonnegative integer coordinates we consider the ordered monomial $M_T\in \mU(\fn_-)$ that contains $f_{i,j}$ in degree $T_{i,j}$. Note that $M_T$ is defined uniquely since any two elements of the form $f_{i,j_1}$ and $f_{i,j_2}$ commute.
\begin{theorem}\label{basis}
The set $\{M_T v_\la, T\in\Pi_\la\}$ is a basis in $L_\la$.
\end{theorem}
This theorem will be proved in Section~\ref{gtdegens} after we introduce the relevant degree functions on monomials in $\mU(\fn_-)$, but also cf.~\cite[Corollary 2.1.3]{ChL}.

\section{Gelfand--Tsetlin degenerations}\label{gtdegens}

In this section we will define a specific family of Gr\"obner degenerations of $F$ and list several properties of the corresponding objects introduced in Section~\ref{generalities} (as well as proving Theorem~\ref{basis}). First, consider a collection of integers $A=(a_{i,j}|1\le i<j\le n)$ such that 
\begin{enumerate}[label=(\alph*)]
\item $a_{i,i+1}+a_{i+1,i+2}\le a_{i,i+2}$ for any $1\le i\le n-2$ and
\item $a_{i,j}+a_{i+1,j+1}\le a_{i,j+1}+a_{i+1,j}$ for any $1\le i<j-1\le n-2$
\end{enumerate}
or, equivalently,
\begin{enumerate}[label=(\Alph*)]
\item $a_{i,j}+a_{j,k}\le a_{i,k}$ for any $1\le i<j<k\le n$ and
\item $a_{i,j}+a_{k,l}\le a_{i,l}+a_{k,j}$ for any $1\le i<k<j<l\le n$. 
\end{enumerate}
The proof that the inequalities in (A) and (B) can be deduced from those in (a) and (b) is straightforward and almost identical (up to reversing all inequalities) to the proof of Proposition~2.1 in~\cite{fafefom}.

We will view $A$ as an element of $\Theta^*$ equipped with the basis dual to the one chosen in $\Theta$. We define \[\sigma(A)=(\sigma(A)_{i_1,\ldots,i_k})\in \bR^{\{1\le i_1<\ldots<i_k\le n|1\le k\le n-1\}}\] with $\sigma(A)_{i_1,\ldots,i_k}=A(T(i_1,\ldots,i_k))$ (as a functional on $\Theta$). We will refer to Gr\"obner degenerations given by $\sigma(A)$ with $A$ satisfying (a) and (b) as Gelfand--Tsetlin (or GT) degenerations of $F$ and to the corresponding associated graded spaces $L_\la^{\sigma(A)}$ as GT degenerations of $L_\la$. From now on and through Section~\ref{singular} we fix $A$ and $S=(s_{i_1,\ldots,i_k})=\sigma(A)$.

\begin{example}\label{sl3ideal}
For $n=3$ one may set $A={{a_{1,2}\,a_{2,3}}\atop{a_{1,3}}}={{-1\,-1}\atop{-1}}$. The only inequality here is $a_{1,2}+a_{2,3}\le a_{1,3}$ and it, evidently, holds. All points $T(i_1,\ldots,i_k)$ are listed in Example~\ref{sl3polytopes} and one can compute $\grad^S X_1=\grad^S X_{1,2}=0$, $\grad^S X_2=\grad^S X_3=\grad^S X_{1,3}=-1$ and $\grad^S X_{2,3}=-2$. The ideal $I$ is generated by the element $X_1X_{2,3}-X_2X_{1,3}+X_3X_{1,2}$ and the initial part of this element with respect to $\grad^S$ is $X_1X_{2,3}-X_2X_{1,3}$ which is the sole generator of $\initial_{\grad^S}I$. 
\end{example}

For a monomial $M=f_{i_1,j_1}\ldots f_{i_N,j_N}\in\mU(\fn_{-})$ denote \[\deg^A M=a_{i_1,j_1}+\ldots+a_{i_N,j_N}.\] Note that when $n=3$ there are two ordered monomials in $\mU(\fn_-)$ which map $v_{\om_2}$ to a nonzero multiple of $e_{2,3}$, these are $M_{T(2,3)}$ and (in the notations of Example~\ref{sl3polytopes}) $M_{{0\,0}\atop{1}}$. We have \[\deg^A M_{T(2,3)}=a_{1,2}+a_{2,3}\le a_{1,3}=\deg^A M_{{0\,0}\atop{1}}.\] This inequality between the degrees of the monomials is a special case of the following key lemma which shows why we want $A$ to satisfy (A) and (B). 

%For $1\le k\le n-1$ let $\fn_k\subset\fn_{-}$ be the subalgebra spanned by $f_{i,j}$ with $i\le k$. 
\begin{lemma}\label{minmon}
For a tuple $1\le i_1<\ldots<i_k\le n$ and an ordered monomial $M$ such that $M v_{\om_k}\in\bC^*e_{i_1,\ldots,i_k}$ we have $\deg^A M\ge s_{i_1,\ldots,i_k}$.
\end{lemma}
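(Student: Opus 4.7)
The plan is to first show that, under the hypothesis, $M$ must be a product of ``single-jump'' factors encoded by a bijection $\sigma\colon\{1,\dots,k\}\to\{i_1,\dots,i_k\}$ with $\sigma(l)\ge l$, and then to reduce any such $\sigma$ to the sorted bijection $\sigma_*(l)=i_l$ by a bubble-sort that exploits inequalities (A) and (B).

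First I would write $M=f_{r_1,s_1}\cdots f_{r_N,s_N}$ with $r_1\le\cdots\le r_N$ and compute $M v_{\om_k}$ by applying the factors right to left. Since every intermediate wedge has pairwise distinct indices, each $f_{r,s}$ contributes at most one nonzero Leibniz term: it replaces the unique $e_r$ in the wedge (if present) by $e_s$. Thus the computation amounts to following $k$ ``particles'' starting at positions $1,\dots,k$ as they jump along the chain of factors. The key observation is that the temporal order reads the first indices $r_N,r_{N-1},\dots,r_1$ in non-increasing order, while the consecutive first indices used by any single particle would be strictly increasing; hence each particle jumps at most once. This forces
\[
M=\prod_{l\in L}f_{l,s_l}\qquad(l\text{ increasing})
\]
for some $L\subseteq\{1,\dots,k\}$ and some $s_l>l$, with end positions $\{s_l:l\in L\}\cup(\{1,\dots,k\}\setminus L)=\{i_1,\dots,i_k\}$. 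Setting $\sigma(l)=s_l$ for $l\in L$ and $\sigma(l)=l$ otherwise produces a bijection $\sigma\colon\{1,\dots,k\}\to\{i_1,\dots,i_k\}$ with $\sigma(l)\ge l$ and $\deg^A M=\sum_{l\colon\sigma(l)>l}a_{l,\sigma(l)}$. Note that $M_{T(i_1,\dots,i_k)}$ itself corresponds to the sorted bijection $\sigma_*(l)=i_l$, for which the sum equals exactly $s_{i_1,\dots,i_k}$.

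To finish I would show $\sigma_*$ is a minimizer. While $\sigma\ne\sigma_*$, pick $l_1<l_2$ with $\sigma(l_1)>\sigma(l_2)$ and let $\sigma'$ be obtained by swapping the two values. The inequality $\sigma'(l)\ge l$ persists since $\sigma'(l_1)=\sigma(l_2)\ge l_2>l_1$, and the sum weakly decreases: if $\sigma(l_2)=l_2$ then $l_1<l_2<\sigma(l_1)$ and (A) gives $a_{l_1,l_2}+a_{l_2,\sigma(l_1)}\le a_{l_1,\sigma(l_1)}$; otherwise $l_1<l_2<\sigma(l_2)<\sigma(l_1)$ and (B) gives $a_{l_1,\sigma(l_2)}+a_{l_2,\sigma(l_1)}\le a_{l_1,\sigma(l_1)}+a_{l_2,\sigma(l_2)}$. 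Iterating the swaps strictly reduces the number of inversions of $\sigma$, so we eventually reach $\sigma_*$ and obtain $\deg^A M\ge s_{i_1,\dots,i_k}$.

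The main obstacle I anticipate is in setting up the single-jump decomposition cleanly: one must extract the temporal interpretation of the ordered condition and combine it with the rigidity of the Leibniz rule on wedges of distinct indices. The subsequent Monge-style rearrangement argument is routine once the framework is in place.
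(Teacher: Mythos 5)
Your proof is correct, and while it rests on the same two inequalities used in the same roles — (B) to transpose two ``jumps'' and (A) to pass through an intermediate index — its organization is genuinely different from the paper's. The paper takes an ordered monomial $M$ with $Mv_{\om_k}=\pm e_{i_1,\ldots,i_k}$ of \emph{minimal} degree, tie-broken by the potential $\sum_i(m_i-l_i)^2$, and shows by local exchanges that this extremal monomial must be $M_{T(i_1,\ldots,i_k)}$; it never classifies the non-extremal monomials. You instead first prove a structural statement about \emph{all} such monomials — each is a single-jump product $\prod_{l\in L}f_{l,s_l}$ encoded by a bijection $\sigma\colon\{1,\dots,k\}\to\{i_1,\dots,i_k\}$ with $\sigma(l)\ge l$ — and then minimize the resulting linear functional $\sum_{l:\sigma(l)>l}a_{l,\sigma(l)}$ over such bijections by an inversion-counting bubble sort. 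Your particle argument for the single-jump form is sound: the ordered condition forces the first indices to be read in non-increasing order temporally, a second jump of one particle would require a strictly larger first index later in time, and no particle can re-enter a vacated position before a repeated first index acts (which also shows the $l$'s are distinct and lie in $\{1,\dots,k\}$); your case analysis for the swaps is exhaustive since $\sigma(l)\ge l$ rules out the degenerate configurations. The trade-off: the paper's extremal argument yields, essentially for free, the refinements stated right after the lemma (Propositions~\ref{minmonstrict} and~\ref{sumsquares}, which track when equality holds and are needed later in the proofs of Theorem~\ref{basis} and Theorem~\ref{kerneldet}); recovering those from your route would require a supplementary analysis of the equality cases in your swaps. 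What your route buys is a clean classification of all ordered monomials acting nontrivially on $L_{\om_k}$ and a termination argument that needs no auxiliary potential function.
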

\begin{proof}
First of all, note that $f_{i,j}$ maps $e_{i_1,\ldots,i_k}$ to $\pm e_{j_1,\ldots,j_k}$ where \[\{j_1,\ldots,j_k\}=\{i_1,\ldots,i_k\}\cup\{j\}\backslash\{i\}\] if $i\in\{i_1,\ldots,i_k\}$ and $j\notin\{i_1,\ldots,i_k\}$, otherwise $f_{i,j}$ maps $e_{i_1,\ldots,i_k}$ to 0. In particular, that means that for any monomial $M$ in the $f_{i,j}$ the vector $Mv_{\om_k}$ is either zero or of the form $\pm e_{j_1,\ldots,j_k}$.

Recall the ordered monomials $M_T$ defined in Section~\ref{bases}. Note that $M_{T(i_1,\ldots,i_k)} v_{\om_k}=\pm e_{i_1,\ldots,i_k}$ and that $\deg^A M_{T(i_1,\ldots,i_k)}=s_{i_1,\ldots,i_k}$. 

Now, consider an ordered monomial \[M=f_{l_1,m_1}\ldots f_{l_N,m_N}\] such that $Mv_{\om_k}=\pm e_{i_1,\ldots,i_k}$ with minimal $\deg^A M$ and out of these with the minimal possible sum $\sum_i (m_i-l_i)^2$. We prove the lemma by showing that $M=M_{T_{i_1,\ldots,i_k}}$.

Any product of the form $f_{i,j_1}f_{i,j_2}$ annihilates $L_{\om_k}$, therefore all $l_i$ are pairwise distinct. Suppose that for some $i$ we have $m_i\ge m_{i+1}$. The product $f_{l_i,m_i}f_{l_{i+1},m_i}$ annihilates $L_{\om_k}$, hence $m_i>m_{i+1}$. For any $e_{j_1,\ldots,j_k}$ we have \[f_{l_i,m_i}f_{l_{i+1},m_{i+1}}(e_{j_1,\ldots,j_k})=\pm f_{l_i,m_{i+1}}f_{l_{i+1},m_i}(e_{j_1,\ldots,j_k}).\] Therefore, by replacing $f_{l_i,m_i}f_{l_{i+1},m_{i+1}}$ in $M$ with $f_{l_i,m_{i+1}}f_{l_{i+1},m_i}$ we would obtain a monomial also mapping $v_{\om_k}$ to $\pm e_{i_1,\ldots,i_k}$ and of no greater $\deg^A$-degree due to (B). However, $$(m_i-l_{i+1})^2+(m_{i+1}-l_i)^2<(m_i-l_i)^2+(m_{i+1}-l_{i+1})^2$$ which would contradict our choice of $M$. We see that both sequences $(l_1,\ldots,l_N)$ and $(m_1,\ldots,m_N)$ are strictly increasing.

Now define a tuple $(j_1,\ldots,j_k)$ by setting $j_l=m_i$ if $l=l_i$ for some $i$ and $j_l=l$ otherwise. $Mv_{\om_k}=\pm e_{i_1,\ldots,i_k}$ implies that $(j_1,\ldots,j_k)$ is a permutation of $(i_1,\ldots,i_k)$ (note that all $l_i\le k$ since $M v_{\om_k}\neq 0$), we prove that $(j_1,\ldots,j_k)=(i_1,\ldots,i_k)$. Suppose the contrary, i.e.\ that $j_l>j_{l+1}$ for some $l$. The sequences $(l_1,\ldots,l_N)$ and $(m_1,\ldots,m_N)$ increasing implies that $j_{l+1}=l+1$ while $j_l=m_i$ for some $i$. In particular, $l+1\notin\{l_1,\ldots,l_N\}$ and replacing $f_{l,m_i}$ in $M$ with the product $f_{l,l+1}f_{l+1,m_i}$ we would obtain a monomial $M'$ with $M'v_{\om_k}=\pm Mv_{\om_k}$ and $\deg^AM'\le\deg^A M$ due to (A). However, $1+(m_i-l-1)^2<(m_i-l)^2$ which, again, contradicts our choice of $M$.
\end{proof}

The above proof has the following two implications.
\begin{proposition}\label{minmonstrict}
Suppose that all the inequalities in (A) and (B) are strict. If $T\in\bZ_{\ge0}^{\{1\le i<j\le n\}}\subset\Theta$ is such that $M_Tv_{\om_k}=\pm e_{i_1,\ldots,i_k}$, then either $T=T(i_1,\ldots,i_k)$ or $\deg^A M_T>s_{i_1,\ldots,i_k}$.
\end{proposition}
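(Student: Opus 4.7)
The plan is to adapt the proof of Lemma~\ref{minmon} to the strict setting by checking that every weak inequality used there becomes strict. Assume for contradiction that $T \neq T(i_1,\ldots,i_k)$ while $M_T v_{\om_k} = \pm e_{i_1,\ldots,i_k}$ and $\deg^A M_T = s_{i_1,\ldots,i_k}$, so that $M_T$ realizes the lower bound established in Lemma~\ref{minmon}. First I would observe that since $M_T$ is ordered and $M_T v_{\om_k}$ is nonzero, no index $l$ can be repeated in $M_T$: any repetition would produce a product of the form $f_{l,j_1} f_{l,j_2}$ annihilating $L_{\om_k}$. Writing $M_T = f_{l_1,m_1}\cdots f_{l_N,m_N}$ we therefore have $l_1 < \ldots < l_N$, and every $T_{i,j}$ is either $0$ or $1$ with at most one $1$ in each row.

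Next I would run through the two reduction steps of the Lemma's proof. If $(m_1,\ldots,m_N)$ fails to be strictly increasing, the same argument as in Lemma~\ref{minmon} rules out $m_i = m_{i+1}$ and produces a position $i$ with $m_i > m_{i+1}$; replacing $f_{l_i,m_i}f_{l_{i+1},m_{i+1}}$ by $f_{l_i,m_{i+1}}f_{l_{i+1},m_i}$ inside $M_T$ yields another ordered monomial $M_{T'}$ still sending $v_{\om_k}$ to $\pm e_{i_1,\ldots,i_k}$. The quadruple $l_i < l_{i+1} < m_{i+1} < m_i$ now falls under the strict form of (B), giving $\deg^A M_{T'} < \deg^A M_T$ and contradicting the assumed minimality. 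Hence $(m_1,\ldots,m_N)$ is strictly increasing. Defining $(j_1,\ldots,j_k)$ as in Lemma~\ref{minmon}, if this tuple is not equal to $(i_1,\ldots,i_k)$ then the same analysis locates an $l$ with $j_{l+1}=l+1$ and $j_l = m_i > l+1$; replacing $f_{l,m_i}$ by $f_{l,l+1}f_{l+1,m_i}$ yields an ordered monomial $M_{T''}$ with the same image of $v_{\om_k}$, and the strict form of (A) applied to the triple $l < l+1 < m_i$ forces $\deg^A M_{T''} < \deg^A M_T$, again a contradiction.

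Combining both steps, $M_T$ must already have strictly increasing $l$'s and $m$'s with $(j_1,\ldots,j_k) = (i_1,\ldots,i_k)$, which pins down $T = T(i_1,\ldots,i_k)$ and contradicts our assumption. The only real work is the bookkeeping checking that the swap and the insertion stay within the class of ordered monomials of the form $M_{T'}$ whose action on $v_{\om_k}$ still reproduces $\pm e_{i_1,\ldots,i_k}$; this is essentially carried out in the proof of Lemma~\ref{minmon}, and the only genuinely new ingredient here is the promotion of the weak inequalities in that proof to strict ones, which is immediate from the hypothesis that all inequalities in (A) and (B) are strict.
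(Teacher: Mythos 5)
Your proposal is correct and is exactly the argument the paper intends: Proposition~\ref{minmonstrict} is stated there as an immediate implication of the proof of Lemma~\ref{minmon}, namely that each of the two reduction steps (the swap governed by (B) and the insertion governed by (A)) strictly decreases $\deg^A$ once the inequalities are strict, so a monomial attaining the minimal degree $s_{i_1,\ldots,i_k}$ must already be $M_{T(i_1,\ldots,i_k)}$. Your write-up fills in the same bookkeeping the paper leaves implicit, with no gaps.
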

For $T\in\Theta$ denote $\sq(T)=\sum_{i,j} T_{i,j}(j-i)^2$. 
\begin{proposition}\label{sumsquares}
If $T\in\bZ_{\ge0}^{\{1\le i<j\le n\}}\subset\Theta$ is such that $M_Tv_{\om_k}=\pm e_{i_1,\ldots,i_k}$ and $\deg^A M_T=s_{i_1,\ldots,i_k}$, then either $T=T(i_1,\ldots,i_k)$ or $\sq(T)>\sq(T(i_1,\ldots,i_k))$.
\end{proposition}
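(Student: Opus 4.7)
The plan is to rerun the argument from the proof of Lemma~\ref{minmon}, extracting from it the stronger fact that the minimum of $\sq$ among admissible $T$ (those satisfying the hypotheses of the proposition) is attained uniquely at $T(i_1,\ldots,i_k)$.

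Among all admissible $T$ I would pick one $T^*$ minimizing $\sq(T^*)$, and write $M_{T^*} = f_{l_1,m_1}\cdots f_{l_N,m_N}$ as an ordered monomial; then $\sum_i (m_i - l_i)^2 = \sq(T^*)$, and the $l_i$ are automatically pairwise distinct since $M_{T^*}v_{\om_k} \ne 0$. I would then apply the two exchange moves used in the proof of Lemma~\ref{minmon}: the transposition of a consecutive pair $f_{l_i,m_i} f_{l_{i+1},m_{i+1}}$ whenever $m_i > m_{i+1}$, and the split of a factor $f_{l,m_i}$ into $f_{l,l+1} f_{l+1,m_i}$ whenever the associated sequence $(j_1,\ldots,j_k)$ (as defined in that proof) admits a descent $j_l > j_{l+1}$. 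In either case the result is again of the form $M_{T'}$ with $M_{T'} v_{\om_k} = \pm e_{i_1,\ldots,i_k}$, and the same elementary calculations performed in Lemma~\ref{minmon} give $\sq(T') < \sq(T^*)$.

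The key subtlety is that $T'$ must remain admissible. Conditions (A) and (B) give $\deg^A M_{T'} \le \deg^A M_{T^*} = s_{i_1,\ldots,i_k}$, while Lemma~\ref{minmon} yields the reverse bound $\deg^A M_{T'} \ge s_{i_1,\ldots,i_k}$; hence $\deg^A M_{T'} = s_{i_1,\ldots,i_k}$ and $T'$ is admissible, contradicting the minimality of $\sq(T^*)$. Therefore no exchange applies to $M_{T^*}$, and the final step of the proof of Lemma~\ref{minmon} forces $T^* = T(i_1,\ldots,i_k)$. Since this minimizer is unique, any admissible $T \neq T(i_1,\ldots,i_k)$ satisfies $\sq(T) > \sq(T(i_1,\ldots,i_k))$. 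The main obstacle, namely ensuring that each exchange preserves $\deg^A$ exactly and not merely weakly, is resolved precisely by this two-sided bound.
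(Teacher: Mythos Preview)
Your proposal is correct and is essentially the paper's own argument: the paper states Proposition~\ref{sumsquares} as a direct implication of the proof of Lemma~\ref{minmon}, and what you have written is precisely the explicit unpacking of that implication. Your remark about the two-sided bound (the exchange gives $\deg^A M_{T'}\le s_{i_1,\ldots,i_k}$ while Lemma~\ref{minmon} gives $\ge$) is exactly the reason the paper's choice of ``minimal $\deg^A$ first, then minimal $\sq$'' makes the exchanges stay within the admissible set, so the logic is identical.
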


We are now ready to prove Theorem~\ref{basis}.
\begin{proof}[Proof of Theorem~\ref{basis}]
In view of Corollary~\ref{dimension} it suffices to show that the set $\{M_T v_\la,T\in\Pi_\la\}$ is linearly independent. We make use of $L_\la$ being embedded into $U_\la$ as the subrepresentation generated by $u_\la$. 

For $T\in\Pi_\la$ let $U_T\subset U_\la$ be the subspace spanned by products of the form \[v^1_1\otimes\ldots\otimes v^1_{a_1}\otimes\ldots\otimes v^{n-1}_1\otimes\ldots\otimes v^{n-1}_{a_{n-1}}\] with $v^i_j=e_{l^{i,j}_1,\ldots,l^{i,j}_i}$ for which the total of all $T(l^{i,j}_1,\ldots,l^{i,j}_i)$ is equal to $T$. Then $U_\la$ is the direct sum of $U_T$ with $T$ ranging over $\Pi_\la$ and we see that every $U_T$ is $\grad^S$-homogeneous with $U_T\subset (U_\la)_{A(T)}$.

Now, choose $T\in\Pi_\la$ and decompose $M_T u_\la$ into a sum of tensor products. Every summand is obtained by partitioning the set of factors in $M_T$ into $a_1+\ldots+a_{n-1}$ subsets, one for every tensor factor, applying the ordered product of each subset to the corresponding $v_{\om_k}$ and then taking the tensor product of the results. %Here note that if a monomial being applied to some $v_{\om_k}$ contains some $f_{i,j}$ with $i>k$, then the result of this application and, subsequently, the whole summand is zero. 
We now see, by applying Lemma~\ref{minmon} and Proposition~\ref{sumsquares}, that for every summand in this decomposition one of the following holds. It either a) lies in $U_T$, b) lies in some $U_{T'}$ with $A(T')=A(T)=\deg^A M_T$ and $\sq(T')<\sq(T)$ or c) lies in some $U(T')$ with $A(T')<A(T)$. Moreover, in view of the Minkowski sum property, a) holds for at least one of the summands and, therefore, the projection of $M_T u_\la$ onto $U_T$ is nonzero.

Choose a total order $\prec$ on $\Pi_\la$ such that $T_1\prec T_2$ whenever $A(T_1)<A(T_2)$ or $A(T_1)=A(T_2)$ and $\sq(T_1)<\sq(T_2)$. For a nontrivial linear combination $\Omega$ of $M_Tu_\la$ with $T\in\Pi_\la$ let $T_0$ be the $\prec$-maximal element for which $M_{T_0}u_\la$ appears in $\Omega$ with a nonzero coefficient. Since the projection onto $U_{T_0}$ is nonzero for $M_{T_0}u_\la$ but zero for all other summands, it is also nonzero for $\Omega$.
\end{proof}
For $T\in\bZ_{\ge0}^{\{1\le i<j\le n\}}$ say that the monomial $M_T$ is {\it $L_\la$-optimal} if $M_T v_\la$ lies in $(L_\la)_{A(T)}$ but not in $(L_\la)_{A(T)-1}$. The above proof implies the following fact which we will make use of later.
\begin{proposition}\label{basiscor}
For every $T\in\Pi_\la$ the monomial $M_T$ is $L_\la$-optimal.
\end{proposition}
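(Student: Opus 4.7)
The statement is essentially a by-product of the proof of Theorem~\ref{basis}, so the plan is to extract the two ingredients that appear there. For the upper bound $M_T v_\la \in (L_\la)_{A(T)}$ I would expand $M_T u_\la \in U_\la$ via the coproduct as a sum of tensor-product summands: each summand corresponds to a partition of the factors of $M_T$ among the tensor slots of $u_\la$, with each block, in its inherited order, applied to the corresponding $v_{\om_k}$. A block applied to $v_{\om_k}$ yields either $0$ or $\pm e_{i_1,\ldots,i_k}$; in the latter case Lemma~\ref{minmon} gives $\deg^A$ of that block at least $s_{i_1,\ldots,i_k}$. Summing over blocks shows every surviving summand has $\grad^S \le A(T)$, so $M_T u_\la \in (U_\la)_{\le A(T)}$ and hence $M_T v_\la \in (L_\la)_{A(T)}$.

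For the sharper lower bound $M_T v_\la \notin (L_\la)_{A(T)-1}$ I would reuse the trichotomy already used for triangularity in Theorem~\ref{basis}: combining Lemma~\ref{minmon} with Proposition~\ref{sumsquares}, every summand lies in some $U_{T'}$ with either $A(T') < A(T)$, or $A(T') = A(T)$ and $\sq(T') < \sq(T)$, or $T' = T$. Since $U_\la$ splits as $\bigoplus_{T' \in \Pi_\la} U_{T'}$ with each $U_{T'}$ being $\grad^S$-homogeneous of degree $A(T')$, it suffices to verify that the projection of $M_T u_\la$ onto the direct summand $U_T$ is nonzero; once established, $M_T u_\la$ has a nonzero component of $\grad^S$-degree exactly $A(T)$ and the conclusion follows. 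Existence of a summand landing in $U_T$ is guaranteed by the Minkowski sum property (Proposition~\ref{gtminkowski} via Lemma~\ref{gtequiv}): any decomposition $T = \sum_{i,j} T(l^{i,j}_1, \ldots, l^{i,j}_i)$ prescribes at least one partition of the factors of $M_T$ in which block $(k,j)$ produces exactly $\pm e_{l^{k,j}_1, \ldots, l^{k,j}_k}$, so the resulting tensor lies in $U_T$.

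The main obstacle I anticipate is the non-cancellation step, namely that the total contribution to $U_T$ is nonzero rather than just formally nonempty. I would handle it by isolating a single tensor-basis vector $w = e_{l^{1,1}_1} \otimes \cdots \otimes e_{l^{n-1,a_{n-1}}_1, \ldots, l^{n-1,a_{n-1}}_{n-1}} \in U_T$ attached to one fixed Minkowski decomposition of $T$, and arguing, via the sharper parts of Lemma~\ref{minmon} and Proposition~\ref{sumsquares}, that the only partitions of the factors of $M_T$ contributing to $w$ are those in which block $(k,j)$ consists of precisely the $T(l^{k,j}_1, \ldots, l^{k,j}_k)$-many factors read off from that decomposition. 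The number of such partitions is a product of multinomial coefficients, one per pair $(i,j)$, distributing the $T_{i,j}$ copies of $f_{i,j}$ among the blocks. Each such partition produces $w$ with the same sign, determined solely by the fixed internal order of $M_T$, so the contributions add constructively to a strictly positive multiple of that sign. This shows that the $U_T$-projection of $M_T u_\la$ is nonzero and completes the argument.
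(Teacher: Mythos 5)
Your proof is correct and follows essentially the same route as the paper, which simply observes that Proposition~\ref{basiscor} is a by-product of the proof of Theorem~\ref{basis}: the blockwise application of Lemma~\ref{minmon} gives $M_Tu_\la\in(U_\la)_{\le A(T)}$, and the trichotomy via Proposition~\ref{sumsquares} together with the Minkowski decomposition of $T$ gives a nonzero projection onto $U_T$. Your explicit treatment of the non-cancellation step (all contributing partitions of the factors yield the same signed basis tensor, since the sign of a block's action depends only on its exponent vector) is a detail the paper leaves implicit, and it is handled correctly.
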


We now explicitly describe a filtration on $\mU(\fn_-)$ which induces the filtrations on $L_\la$ given by a GT degeneration. The increasing $\bZ$-filtration (but not a filtered algebra structure!) on $\mU(\fn_{-})$ is defined by component $\mU(\fn_{-})_m$ being spanned by ordered monomials $M$ with $\deg^AM\le m$. Recall the filtration $((L_\la)_m, m\in\bZ)$ defined in Section~\ref{generalities} (with respect to the chosen $S=\sigma(A)$).
\begin{theorem}\label{Lfiltration}
$(\mU(\fn_-))_mv_\la=(L_\la)_m$ for every $m\in\bZ$.
\end{theorem}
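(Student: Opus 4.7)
The plan is to prove the two inclusions separately. For $(\mU(\fn_-))_mv_\la\subseteq(L_\la)_m$ I would invoke Lemma~\ref{minmon} after expanding the action on $v_\la=u_\la$ via the coproduct of $\mU(\fn_-)$ on $U_\la$. For $(L_\la)_m\subseteq(\mU(\fn_-))_mv_\la$ I would pass to the associated graded $L_\la^S$, show that the images of the monomial basis from Theorem~\ref{basis} form a graded basis of $L_\la^S$ using the triangularity already established in the proof of that theorem, and then finish by dimension counting.

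Take any ordered monomial $M$ with $\deg^AM\le m$ and expand $Mv_\la=Mu_\la$ in the decomposition $U_\la=\bigoplus_{T'\in\Pi_\la}U_{T'}$ by distributing each factor of $M$ across the $a_1+\ldots+a_{n-1}$ tensor positions via the coproduct. Each nonzero summand is a tensor product of the form $(M^{(1,1)}v_{\om_1})\otimes\ldots\otimes(M^{(n-1,a_{n-1})}v_{\om_{n-1}})$, where $M^{(i,j)}$ is the product of those factors of $M$ assigned to position $(i,j)$; the internal ordering of each $M^{(i,j)}$ is inherited from $M$ and is therefore still increasing in the first index. Thus each $M^{(i,j)}v_{\om_i}$ is a nonzero scalar multiple of some $e_{l_1^{(i,j)},\ldots,l_i^{(i,j)}}$, and Lemma~\ref{minmon} yields $\deg^AM^{(i,j)}\ge s_{l_1^{(i,j)},\ldots,l_i^{(i,j)}}$. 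Summing over positions shows that this summand lies in $U_{T'}$ with $T'=\sum_{i,j}T(l_1^{(i,j)},\ldots,l_i^{(i,j)})\in\Pi_\la$ satisfying $A(T')\le\deg^AM\le m$. Hence $Mv_\la\in(U_\la)_{\le m}\cap L_\la=(L_\la)_m$.

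For the reverse inclusion, Theorem~\ref{basis} and Proposition~\ref{basiscor} give a basis $\{M_Tv_\la:T\in\Pi_\la\}$ of $L_\la$ with $M_Tv_\la\in(L_\la)_{A(T)}\setminus(L_\la)_{A(T)-1}$. I claim that the images $\overline{M_Tv_\la}\in L_\la^S$ form a basis of $L_\la^S$ compatible with the grading. Linear independence across different graded components is automatic, so fix $m$ and consider $\{T\in\Pi_\la:A(T)=m\}$, with images in $(L_\la^S)_m\hookrightarrow(U_\la)_{\le m}/(U_\la)_{\le m-1}=\bigoplus_{T:A(T)=m}U_T$. The proof of Theorem~\ref{basis} supplies the expansion $M_Tu_\la=\sum_{T'}x_{T,T'}$ with $x_{T,T'}\in U_{T'}$, in which $x_{T,T}\neq 0$, while any other $T'$ with $A(T')=A(T)$ and $x_{T,T'}\neq 0$ satisfies $\sq(T')<\sq(T)$. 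Ordering $\{T:A(T)=m\}$ by increasing $\sq$, the matrix expressing the images of the $M_Tu_\la$ in $\bigoplus_{T:A(T)=m}U_T$ is block lower triangular with nonzero diagonal blocks $x_{T,T}$, so these images are linearly independent.

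A dimension count now closes the argument. The $|\Pi_\la|$ linearly independent images $\overline{M_Tv_\la}$ exhaust a basis of $L_\la^S$ since $\dim L_\la^S=\dim L_\la=|\Pi_\la|$, whence $\dim(L_\la)_m=|\{T\in\Pi_\la:A(T)\le m\}|$. The linearly independent set $\{M_Tv_\la:T\in\Pi_\la,\,A(T)\le m\}\subseteq(L_\la)_m$ therefore spans $(L_\la)_m$, and since each such $M_T$ is an ordered monomial with $\deg^AM_T=A(T)\le m$, i.e.\ $M_T\in(\mU(\fn_-))_m$, this gives $(L_\la)_m\subseteq(\mU(\fn_-))_mv_\la$. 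The main obstacle is securing the linear independence of $\{\overline{M_Tv_\la}\}$ in $L_\la^S$: one must carefully extract the secondary $\sq$-triangularity from inside the proof of Theorem~\ref{basis} and verify that it persists after projecting from $(U_\la)_{\le m}$ to the graded quotient $(U_\la)_{\le m}/(U_\la)_{\le m-1}$.
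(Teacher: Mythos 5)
Your proof is correct and follows essentially the same route as the paper's: both inclusions rest on the decomposition $U_\la=\bigoplus_{T\in\Pi_\la}U_T$, Lemma~\ref{minmon}, and the $\sq$-triangularity extracted from the proof of Theorem~\ref{basis}. The only difference is cosmetic: for the reverse inclusion the paper argues directly that the basis expansion of any $v\in(L_\la)_m$ is supported on $T$ with $A(T)\le m$ (by examining the coefficient with maximal $A(T)$ and, among those, minimal $\sq(T)$), whereas you reach the same conclusion by a dimension count in the associated graded space.
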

\begin{proof}
Similarly to the proof of Theorem~\ref{basis}, for a $T\in\bZ_{\ge0}^{\{1\le i<j\le n\}}$ the vector $M_T u_\la$ is a sum of tensor products each lying in some $U_m$ with $m\le \deg^A M_T$. This gives the inclusion $(\mU_\la)_mv_\la\subset(L_\la)_m$.

For the reverse inclusion consider a vector $v\in(L_\la)_m$ and express \[v=\sum_{T\in\Pi_\la} c_TM_Tv_\la.\] Among the $T$ with $c_T\neq 0$ choose a $T_0$ which has the maximal $A(T)=\deg^A M_T$ and among those with maximal $A(T)$ has the minimal $\sq(T)$. From the proof of Theorem~\ref{basis} we see that the projection of $v$ onto the direct summand $U_{T_0}$ is nonzero. This implies that $A(T_0)=\deg^A M_{T_0}\le m$ and, consequently, $v\in(\mU(\fn_-))_m$.
\end{proof}

We proceed to give two characterizations of the initial ideal $\initial_{\grad^S} I$ which mimic and follow from well known characterizations of the ideal of Pl\"ucker relations $I$.

Let us consider the polynomial ring $Q=\bC[\{z_{i,j},1\le i\le j\le n\}]$. On this ring we have a grading $\grad^A$ given by $\grad^A z_{i,j}=a_{i,j}$ if $i<j$ and $\grad^A z_{i,i}=0$. Let $\zeta$ be the $n\times n$ matrix with $\zeta_{i,j}=z_{i,j}$ if $i\le j$ and $\zeta_{i,j}=0$ otherwise. Let $D_{i_1,\ldots,i_k}\in Q$ be the determinant of the submatrix of $\zeta$ spanned by the first $k$ rows and columns $i_1,\ldots,i_k$. 

First, a fact concerning non-degenerate flag varieties.
\begin{proposition}\label{kerneldetclassic}
$I$ is the kernel of the map $\delta$ from $R$ to $Q$ taking $X_{i_1,\ldots,i_k}$ to $D_{i_1,\ldots,i_k}$.
\end{proposition}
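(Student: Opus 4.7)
The plan is to prove the two inclusions $I \subseteq \ker\delta$ and $\ker\delta \subseteq I$ separately, via a geometric interpretation of $\delta$ as pullback along an upper-triangular parametrization of flags.

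For $I \subseteq \ker\delta$ I would argue as follows: over the field $\mathrm{Frac}(Q)$, the rows of $\zeta$ span a complete flag in $Q^n$, and the minors $D_{i_1,\ldots,i_k}$ are exactly the Pl\"ucker coordinates of this flag. Since $I$ cuts out the Pl\"ucker image of $F$, every element of $I$ vanishes after the substitution $X_{i_1,\ldots,i_k} \mapsto D_{i_1,\ldots,i_k}$. Alternatively, one may verify directly that the quadratic generators of $I$---the Grassmannian Pl\"ucker relations for each fixed $k$, together with the incidence relations across different $k$'s---are polynomial identities on the minors of an arbitrary upper-triangular matrix.

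For $\ker\delta \subseteq I$ I would work geometrically. The factorization $\delta\colon R \twoheadrightarrow \mathcal P \xrightarrow{\bar\delta} Q$ corresponds to a morphism $\mathrm{Spec}(Q) \to \mathrm{Spec}(\mathcal P) = \hat F$ into the affine multicone over the Pl\"ucker-embedded flag variety, sending $\zeta$ to the point with coordinates $(D_{i_1,\ldots,i_k}(\zeta))$. To show $\bar\delta$ is injective it suffices to prove this morphism is dominant, which I would establish as follows. On the open subset of $\mathrm{Spec}(Q)$ where the leading principal minors $z_{1,1} \cdots z_{n-1,n-1}$ are nonzero, the first $n-1$ rows of $\zeta$ are linearly independent, and any sufficiently generic flag arises as the flag of initial rows of some such $\zeta$, recoverable via Gaussian elimination. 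The variable $z_{n,n}$ does not appear in any $D_{i_1,\ldots,i_k}$, and the remaining $\binom{n+1}{2} - 1$ variables of $Q$ match $\dim \hat F = \binom{n}{2} + (n-1)$, so the map is equidimensional and hits an open dense subset, giving the required injectivity.

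The main bookkeeping obstacle is handling the multigrading: since $\mathcal P$ is graded by the dominant weight lattice rather than $\bZ$, injectivity should be argued in each weight component $\mathcal P_\la = L_\la^*$ separately. I would exploit the natural $T$-equivariance of $\bar\delta$ under independent rescalings of the rows of $\zeta$, which decomposes the map weight-by-weight and reduces the claim to checking dominance on each graded piece of the multicone.
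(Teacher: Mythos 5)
Your proof is correct and is essentially the paper's argument: both inclusions ultimately rest on the fact that the flags of initial rows of invertible upper-triangular matrices form a dense (big-cell) subset of $F$, together with $I$ being the prime vanishing ideal of the Pl\"ucker-embedded flag variety, so that dominance of $\mathrm{Spec}(Q)\to\mathrm{Spec}(\mathcal P)$ forces $\ker\bar\delta=0$. The closing worry about the multigrading is unnecessary---injectivity of $\bar\delta$ follows at once from dominance because $\mathcal P$ is a domain---but it does no harm.
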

\begin{proof}
This is a variation of the following classical fact which can be interpreted as a definition of the Pl\"ucker embedding. If we introduce $n\choose2$ more variables $z_{i,j}$ for $1\le j<i\le n$, consider the matrix $\zeta'$ with $\zeta_{i,j}=z_{i,j}$ and let $D'_{i_1,\ldots,i_k}$ be the same minor but in $\zeta'$, then $I$ is the kernel of the map $\delta^0$ from $R$ to $\bC[z_{i,j},1\le i,j\le n]$ taking $X_{i_1,\ldots,i_k}$ to $D'_{i_1,\ldots,i_k}$.

The map $\delta$ is the composition of $\delta^0$ and the map from $\bC[z_{i,j},1\le i,j\le n]$ to Q taking $z_{i,j}$ to $0$ if $j<i$ and to $z_{i,j}$ if $i\le j$. Therefore, the kernel of $\delta$ contains $I$.

Now, $F$ can be viewed as $GL_n/B'$, where $B'$ is the set of lower triangular matrices. If we consider a matrix $z\in GL_n$ and specialize the variables $z_{i,j}$ to the elements of this matrix, then the image of $z$ under the projection $GL_n\to F\subset \bP$ will have homogeneous coordinates $(D'_{i_1,\ldots,i_k},1\le i_1<\ldots<i_k\le n)$ which coincides with $(D_{i_1,\ldots,i_k},1\le i_1<\ldots<i_k\le n)$ if $z$ is upper triangular. Therefore, any polynomial $p\in R$ with $\delta(p)=0$ vanishes on the subset of $F$ that is the image of the set of upper triangular matrices in $GL_n$. However, the latter image is Zariski dense in F and we obtain $p\in I$.
\end{proof}

Now we give an analogous fact for GT degenerations.
\begin{theorem}\label{kerneldet}
$\initial_{\grad^S} I$ is the kernel of the map $\delta^S$ from $R$ to $Q$ sending $X_{i_1,\ldots,i_k}$ to $\initial_{\grad^A} D_{i_1,\ldots,i_k}$.
\end{theorem}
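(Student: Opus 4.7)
The inclusion $\initial_{\grad^S} I\subseteq\ker\delta^S$ is the easy half. The key point is that $\delta$ respects the two filtrations: each $D_{i_1,\ldots,i_k}$ has $\grad^A$-minimum exactly $s_{i_1,\ldots,i_k}$, achieved by the identity-permutation term (this is the determinant-level analogue of Lemma~\ref{minmon}, established by the same exchange argument using~(A) and~(B)), so $\delta(R_{\ge m})\subseteq Q_{\ge m}$. Given $q\in I$ with initial part $p=\initial_{\grad^S} q$ of $\grad^S$-degree $m$, write $q=p+q'$ with $q'\in R_{\ge m+1}$; then $\delta(p)=-\delta(q')\in Q_{\ge m+1}$, and since $\delta^S(p)$ is by construction the $\grad^A$-degree-$m$ component of $\delta(p)$, it vanishes.

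For the reverse inclusion I would count dimensions. Combining Propositions~\ref{initdegen} and~\ref{dualspaces} gives $\dim(R/\initial_{\grad^S} I)_\lambda=\dim L_\lambda=|\Pi_\lambda|$, and the easy inclusion already implies $\dim(R/\ker\delta^S)_\lambda\le|\Pi_\lambda|$, so it suffices to exhibit in each $R_\lambda$ a family of monomials $X^T$ indexed by $T\in\Pi_\lambda$ whose images $\delta^S(X^T)$ are linearly independent in $Q$. For each such $T$ I fix a Minkowski decomposition $T=\sum_\alpha T(i^\alpha_1,\ldots,i^\alpha_{k_\alpha})$ afforded by Proposition~\ref{gtminkowski}, where the $|\lambda|$ summands are indexed by $\alpha$ with $\#\{\alpha:k_\alpha=k\}=a_k$, and set $X^T=\prod_\alpha X_{i^\alpha_1,\ldots,i^\alpha_{k_\alpha}}$. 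Then $\grad^S(X^T)=A(T)$ and $\delta^S(X^T)=\prod_\alpha\initial_{\grad^A} D_{i^\alpha_1,\ldots,i^\alpha_{k_\alpha}}$.

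To compare these I equip $Q$ with a lexicographic term order in which the off-diagonal variables $z_{p,q}$ ($p<q$) precede the diagonal ones and are listed first by row and then by column. The core combinatorial check is that within each factor $\initial_{\grad^A} D_{i_1,\ldots,i_k}$ the identity monomial $\prod_p z_{p,i_p}$ is the strict lex-maximum. Indeed, any non-identity non-vanishing permutation $\sigma$ achieving the $\grad^A$-minimum necessarily fixes the initial block $B=\{p:i_p=p\}$; at the smallest $p_0\in\{1,\ldots,k\}\setminus B$ where $\sigma$ differs from the identity one has $\sigma(p_0)>p_0$ and hence $i_{\sigma(p_0)}>i_{p_0}$, so the identity term has strictly larger exponent at the lex-earlier off-diagonal variable $z_{p_0,i_{p_0}}$. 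Multiplicativity of the lex order then gives that the lex-maximum of $\delta^S(X^T)$ is the product of the individual identity terms---a single monomial with off-diagonal exponent vector $T$ appearing with coefficient $+1$---and distinct $T$'s yield distinct lex-maxima, already separated by their off-diagonal exponents.

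The proof finishes with a standard leading-term argument: a hypothetical nontrivial relation $\sum_T c_T\delta^S(X^T)=0$ would have, for $T_0$ maximizing the off-diagonal lex order among $\{T:c_T\ne 0\}$, the coefficient of the lex-max monomial of $\delta^S(X^{T_0})$ contributed only by $\delta^S(X^{T_0})$ itself (monomials of $\delta^S(X^T)$ for $T\ne T_0$ all have strictly smaller off-diagonal vectors), forcing $c_{T_0}=0$ and a contradiction. The main technical effort goes into the lex-maximality claim for $\initial_{\grad^A} D_{i_1,\ldots,i_k}$: one must carefully analyse the non-identity permutations that still achieve the $\grad^A$-minimum (which can be abundant when~(A), (B) are not strict) and verify that the identity strictly dominates them all in the chosen order.
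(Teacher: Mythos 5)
Your proposal is correct, and its skeleton coincides with the paper's: the easy inclusion comes from $\grad^A(\initial_{\grad^A}D_{i_1,\ldots,i_k})=s_{i_1,\ldots,i_k}$, and the reverse inclusion is a dimension count in each $\wt$-component using exactly the monomials $Y_T=\prod_\alpha X_{i^\alpha_1,\ldots,i^\alpha_{k_\alpha}}$ obtained from Minkowski decompositions of $T\in\Pi_\la$. The genuine divergence is in how linear independence of the $\delta^S(Y_T)$ is certified. The paper recycles the statistic $\sq(T)=\sum_{i,j}T_{i,j}(j-i)^2$: by Proposition~\ref{sumsquares} (a byproduct of the proof of Lemma~\ref{minmon}), the term $q$ of $\delta^S(Y_T)$ with $T(q)=T$ is the unique $\sq$-extremal one, and triangularity with respect to $T\mapsto\sq(T)$ does the rest; likewise, the paper gets the degree bound on the terms of $D_{i_1,\ldots,i_k}$ for free from Lemma~\ref{minmon} via the observation that every monomial $p$ of $D_{i_1,\ldots,i_k}$ satisfies $M_{T(p)}v_{\om_k}=\pm e_{i_1,\ldots,i_k}$, rather than re-running the exchange argument at the level of permutations as you propose (your version also works, with (B) handling transpositions and (A) the degenerate case where a diagonal variable is created). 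You replace the $\sq$-triangularity by a lexicographic term order on $Q$, and your combinatorial core is sound: any non-vanishing permutation fixes the initial block $\{p:i_p=p\}$ pointwise, so at the first $p_0$ with $\sigma(p_0)\neq p_0$ one has $\sigma(p_0)>p_0$ and the identity term wins at the lex-earliest off-diagonal variable $z_{p_0,i_{p_0}}$; multiplicativity of the term order then identifies the leading monomial of the product, and since off-diagonal variables are compared before diagonal ones, distinct $T$ give lex-comparable leading monomials separated by their off-diagonal exponent vectors. Your route is self-contained and more in the standard Gr\"obner-basis idiom (it even proves lex-maximality among \emph{all} non-vanishing terms, not just the $\grad^A$-minimal ones); the paper's is shorter only because $\sq$ and Proposition~\ref{sumsquares} were already available. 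Both arguments are valid.
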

\begin{proof}
For a monomial $p\in Q$ let $T(p)\in\Theta$ be the point with coordinate $T(p)_{i,j}$ equal to the degree of $z_{i,j}$ in $p$. Observe that for every monomial $p$ appearing in the polynomial $D_{i_1,\ldots,i_k}$ we have $M_{T(p)} v_{\om_k}=\pm e_{i_1,\ldots,i_k}$. Exactly one of those monomials $q$ has $T(q)=T(i_1,\ldots,i_k)$, hence we see that $\grad^A(\initial_{\grad^A} D_{i_1,\ldots,i_k})=s_{i_1,\ldots,i_k}$ and that $\delta^S(X_{i_1,\ldots,i_k})=\initial_{\grad^A} D_{i_1,\ldots,i_k}$ is a sum of $q$ and other monomials $p$ with $\sq(T(p))>\sq(T(i_1,\ldots,i_k))$.

The fact that $\grad^A(\initial_{\grad^A} D_{i_1,\ldots,i_k})=\grad^S(X_{i_1,\ldots,i_k})$ implies (via Proposition~\ref{kerneldetclassic}) that the kernel of $\delta^S$ contains $\initial_{\grad^S} I$. To prove the reverse inclusion we show that the graded components of $\delta^S(R)$ have dimensions no less than those of $\mathcal P$. Namely, for a integral dominant weight $\la$ let $Q(\la)$ be spanned by those monomials that for every $1\le i\le n$ contain all variables of the form $z_{i,j}$ in total degree $\la_i$ (the $\la_i$ were defined in (iii) in Section~\ref{bases}). One sees that $\delta^S$ maps $R_\la$ into $Q(\la)$. The somewhat inconsistent notation is caused by the fact that a slightly different grading on $Q$ by weights will be considered below. 

Choose a $\la=(a_1,\ldots,a_{n-1})$ and and some $T\in\Pi_\la$. Combining Proposition~\ref{gtminkowski} and Lemma~\ref{gtequiv} we can decompose \[T=T^1_1+\ldots+T^1_{a_1}+\ldots+T^{n-1}_1+\ldots+T^{n-1}_{a_{n-1}}\] where $T^i_j\in\Pi_{\om_i}$. For $T^i_j=T(i_1,\ldots,i_k)$ denote $X^i_j=X_{i_1,\ldots,i_k}$ and consider the monomial $Y_T=\prod_{i,j} X^i_j\in R_\la$. From the first paragraph of the proof we see that $\delta^S(Y_T)$ is the sum of a monomial $q$ with $T(q)=T$ and other monomials $p$ with $\sq(T(p))<\sq(T)$. Consequently, the expressions $\delta^S(Y_T)$ with $T$ ranging over $\Pi_\la$ are linearly independent and the proposition follows.
\end{proof}

In Example~\ref{sl3polytopes} we see that the only point in $\Pi_\la$ that can be decomposed into a sum of points in $\Pi_{\om_1}$ and $\Pi_{\om_2}$ in two different ways is ${{1\,1}\atop0}=T(1)+T(2,3)=T(2)+T(1,3)$. One can deduce that the toric variety associated with the polytope $P_\la$ can be embedded into $\bP$ as the set of zeros of the ideal $\langle X_1X_{2,3}-X_2X_{1,3}\rangle$. However, this ideal coincides with $\initial_{\grad^S}I$ obtained in Example~\ref{sl3ideal}. We generalize this to a fact that is one of our main reasons for considering these degenerations and terming them ``Gelfand--Tsetlin degenerations''.
\begin{theorem}\label{toric}
If all inequalities in (a) and (b) (equivalently, all inequalities in (A) and (B)) are strict and $\la$ is regular, then the GT degeneration $F^S$ is the toric variety associated with the polytope $P_\la$. This is isomorphic to the toric variety associated with the Gelfand--Tsetlin polytope $GT_\la$.
\end{theorem}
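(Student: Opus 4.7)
\bigskip

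\noindent\textbf{Proof proposal.} The plan is to exploit the strictness assumption to reduce $\initial_{\grad^S} I$ to a toric (binomial prime) ideal, and then to read off the corresponding polytope from the multigraded structure of $\mathcal{P}^S$.

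First I would observe that, under the strict inequalities in (A) and (B), Proposition~\ref{minmonstrict} forces each polynomial $D_{i_1,\ldots,i_k}\in Q$ to have a \emph{unique} monomial of minimal $\deg^A$, namely the one with exponent vector $T(i_1,\ldots,i_k)$. Hence $\initial_{\grad^A}D_{i_1,\ldots,i_k}$ is a single (scalar times) monomial, so the map $\delta^S\colon R\to Q$ of Theorem~\ref{kerneldet} is a monomial map. Since $\initial_{\grad^S}I=\ker\delta^S$, the initial ideal is a binomial prime ideal, which already shows that $F^S$ is a toric variety.

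Next I would identify the associated polytope by tracking the multigraded structure. For each integral dominant $\la$, the proof of Theorem~\ref{kerneldet} gives that the classes of the Minkowski monomials $Y_T$ ($T\in\Pi_\la$) form a basis of $\mathcal{P}^S_\la$, with $\delta^S(Y_T)$ being a nonzero scalar times the $Q$-monomial with exponent vector $T$. Combined with the Minkowski sum property $\Pi_\la+\Pi_\mu=\Pi_{\la+\mu}$ (from Proposition~\ref{gtminkowski} via Lemma~\ref{gtequiv}), this shows that $\delta^S$ induces an isomorphism of multigraded algebras between $\mathcal{P}^S$ and the semigroup algebra of the monoid $\bigsqcup_\la\Pi_\la$. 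For regular $\la$, the corresponding Veronese-type subring $\bigoplus_{k\ge0}\mathcal{P}^S_{k\la}$ is therefore the semigroup algebra of the cone over $P_\la$ (the integer decomposition property of $P_\la$ being exactly the Minkowski equality), which is by definition the homogeneous coordinate ring of the projective toric variety $X_{P_\la}$ under the polarization given by $P_\la$. Matching this against the embedding $F^S\subset\bP(L_\la^S)$ from Proposition~\ref{FSinPLS} (whose coordinates are dual to the basis $\{Y_T\}_{T\in\Pi_\la}$) identifies $F^S$ with $X_{P_\la}$.

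Finally, the isomorphism $X_{P_\la}\cong X_{GT_\la}$ is immediate from Lemma~\ref{gtequiv}, since unimodularly equivalent lattice polytopes yield isomorphic projective toric varieties. The most delicate step is the middle one: one must check carefully that the binomial relations among the $Y_T$ in $\initial_{\grad^S}I$ are exactly those identifying distinct Minkowski decompositions of the same lattice point $T\in\Pi_\la$, i.e.\ precisely the defining relations of the toric ideal of $P_\la$. This reduces to the assertion that $\delta^S(Y_T)=\delta^S(Y_{T'})$ iff $T=T'$, which follows directly from the monomial form of $\delta^S$ established in the first step.
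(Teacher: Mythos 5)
Your proposal is correct and follows essentially the same route as the paper: strictness plus Proposition~\ref{minmonstrict} shows each $\initial_{\grad^A}D_{i_1,\ldots,i_k}$ is the single monomial $\prod_l z_{l,i_l}$, so by Theorem~\ref{kerneldet} the initial ideal is the kernel of a monomial map, i.e.\ the toric ideal of the semigroup generated by the $(\om_k,T(i_1,\ldots,i_k))$, whose semigroup ring is the homogeneous coordinate ring of the toric variety of $P_\la$, with Lemma~\ref{gtequiv} giving the identification with $GT_\la$. The extra detail you supply on the multigraded/Veronese identification of the polytope is a fleshed-out version of what the paper delegates to the semigroup-ring observation and to~\cite[Chapter 14]{MS}.
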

\begin{proof}
As pointed out in the proof of Theorem~\ref{kerneldet}, for every monomial $p$ appearing in the polynomial $D_{i_1,\ldots,i_k}$ we have $M_{T(p)} v_{\om_k}=\pm e_{i_1,\ldots,i_k}$. However, in view of Proposition~\ref{minmonstrict}, if all inequalities in (a) and (b) are strict, then $M_{T(i_1,\ldots,i_k)}$ is the only ordered $L_{\om_k}$-optimal monomial mapping $v_{\om_k}$ to $\pm e_{i_1,\ldots,i_k}$. We deduce that $\initial_{\grad^A} D_{i_1,\ldots,i_k}=\prod_{l=1}^k z_{l,i_l}$.

The fact that the subring in $Q$ generated by the monomials $\prod_{l=1}^k z_{l,i_l}$ is the coordinate ring of the toric variety in question is essentially proved in~\cite[Chapter 14]{MS}. However, we can observe that this subring is the semigroup ring of the semigroup in $\fh^*\oplus\Theta$ generated by points of the form $(\om_k,T(i_1,\ldots,i_k))$. This semigroup ring is the homogeneous coordinate ring of the toric variety associated with $P_\la$. The second claim in the proposition follows from the unimodular equivalence proved in Lemma~\ref{gtequiv}.
\end{proof}
The toric ideal $J$ obtained as the kernel of the map taking $X_{i_1,\ldots,i_k}$ to $\prod_{l=1}^k z_{l,i_l}$ is precisely the ideal considered in the works~\cite{GL} and~\cite{KM}. We will come back to this ideal in Sections~\ref{tropical} and~\ref{addendum}.

We move on to the second characterization. Choose a complex vector $c=(c_{i,j})\in\bC^{\{1\le i<j\le n\}}$ and consider the $G$-action $v_k(c)=\prod_{i,j} \exp(c_{i,j}f_{i,j})v_{\om_k}\in L_{\om_k}$, where factors in the product are ordered by $i$ increasing from left to right. This defines $v_k(c)$ uniquely in view of the commutation relations. The coordinate of $v_k(c)$ corresponding to basis vector $e_{i_1,\ldots,i_k}$ is equal to $C_{i_1,\ldots,i_k}(c)$ for some polynomial $C_{i_1,\ldots,i_k}\in\bC[z_{i,j},1\le i<j\le n]$. In the non-degenerate case the following holds. 
\begin{proposition}\label{expprodclassic}
$I$ is the kernel of the map $\varepsilon$ from $R$ to $Q$ sending $X_{i_1,\ldots,i_k}$ to $z_{k,k}C_{i_1,\ldots,i_k}$.
\end{proposition}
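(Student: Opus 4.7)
The plan is to establish $\ker\varepsilon=I$ via two inclusions, following the general strategy of the proof of Proposition~\ref{kerneldetclassic}. For the forward inclusion $I\subseteq\ker\varepsilon$, take $p\in I$; using the $\wt$-homogeneity of $I$, we may assume $p$ is homogeneous of weight $\la=(a_1,\ldots,a_{n-1})$, so each monomial in $p$ contains precisely $a_k$ factors of the form $X_{i_1,\ldots,i_k}$. Then
\[\varepsilon(p)=\prod_{k=1}^{n-1}z_{k,k}^{a_k}\cdot p(C_{i_1,\ldots,i_k}),\]
where $p(C_{i_1,\ldots,i_k})\in\bC[z_{i,j},\,i<j]$ denotes the result of substituting $X_{i_1,\ldots,i_k}\mapsto C_{i_1,\ldots,i_k}$. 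For every specific $c$, the tuple $(C_{i_1,\ldots,i_k}(c))$ lists the Pl\"ucker coordinates of the flag $\prod_{i,j}\exp(c_{i,j}f_{i,j})B\in F$, so $p(C_{i_1,\ldots,i_k}(c))=0$ identically in $c$; hence $\varepsilon(p)=0$.

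For the reverse inclusion $\ker\varepsilon\subseteq I$, view $\varepsilon$ as dual to a morphism $\tilde\Psi:\mathrm{Spec}(Q)\to\mathrm{Spec}(R)$ sending $z$ to the point with $X_{i_1,\ldots,i_k}$-coordinate $z_{k,k}C_{i_1,\ldots,i_k}(z)$. The forward inclusion places the image of $\tilde\Psi$ inside the affine multi-cone $\tilde F=\mathrm{Spec}(\mathcal P)$, which is irreducible since $\mathcal P$ is a domain. On the open locus $U=\{z_{k,k}\ne 0\text{ for all }1\le k\le n-1\}$, the projection of $\tilde\Psi(z)$ to $\bP$ is the Pl\"ucker image of the flag $\prod\exp(z_{i,j}f_{i,j})B$; as $(z_{i,j})_{i<j}$ varies this projection sweeps out the open Bruhat cell $\exp(\fn_-)B/B\subset F$, while the $z_{k,k}$ freely scale the representative in each $L_{\om_k}$ (since $\prod\exp(c_{i,j}f_{i,j})$ is unipotent, the $e_{1,\ldots,k}$-coefficient of $v_k(c)$ equals $1$, so any nonzero rescaling of the $k$-th factor corresponds to a unique value of $z_{k,k}$). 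Thus $\tilde\Psi|_U$ admits a right inverse on the dense open subset of $\tilde F$ where the projection lands in the big cell and all fundamental-factor representatives are nonzero, so $\tilde\Psi|_U$ is dominant; any $p\in\ker\varepsilon$ vanishes on a Zariski dense subset of $\tilde F$ and therefore lies in $I$.

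The main obstacle is formalizing the density argument --- specifically, checking that along each fiber of $\tilde F\to F$ above the big cell the $z_{k,k}$ genuinely recover every affine scaling. Both this and the density of the big cell itself are essentially consequences of the isomorphism $\exp(\fn_-)\xrightarrow{\sim}\exp(\fn_-)B/B$, so modulo a careful unpacking the argument is a straightforward transposition of the final paragraph of the proof of Proposition~\ref{kerneldetclassic} to the opposite Bruhat cell.
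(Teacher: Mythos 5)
Your argument follows the paper's proof in essence: the forward inclusion is identical (homogeneity, factoring out $\prod_k z_{k,k}^{a_k}$, and vanishing of $p$ on the orbit points), and your reverse inclusion --- dominance of $\tilde\Psi$ onto the affine multicone --- is just an affine repackaging of the paper's projective density argument. Your observation that the $e_{1,\ldots,k}$-coefficient of $v_k(c)$ equals $1$ because the product of exponentials is unipotent, so that the $z_{k,k}$ recover every scaling along the fibers over the big cell, correctly disposes of the point you flag as the main obstacle.

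The one place your justification is off is the claim that the ordered products $\prod_{i,j}\exp(c_{i,j}f_{i,j})$ sweep out the whole big cell. This does \emph{not} follow from the isomorphism $\exp(\fn_-)\xrightarrow{\sim}\exp(\fn_-)B/B$, which concerns the exponential of the full algebra $\fn_-$ and not products of the one-parameter subgroups $\exp(\bC f_{i,j})$ taken in a fixed order. This covering statement is precisely the nontrivial step the paper singles out and proves, by induction on $n$ via the factorization $N=N_{n-1}\exp(\fn_1)$; equivalently, it is the standard fact that for any ordering of the roots the multiplication map $\prod_\alpha U_{-\alpha}\to N$ is an isomorphism of varieties (surjectivity, or even just dominance, is all you need). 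With that argument supplied, your proof is complete and is essentially the paper's.
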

\begin{proof}
For an integral dominant weight $\la$ let $\bm v_\la\in\bP(L_\la)$ be the point corresponding to $\bC v_\la$. Let $N\subset G$ be the unipotent subgroup with tangent algebra $\fn_-$. $N$ acts on $\bP$ and the closure of the orbit $N\bm v$ is $F$ where $\bm v=\bm v_{\om_1}\times\ldots\times\bm v_{\om_{n-1}}$. Now, the Pl\"ucker coordinates of the point $\prod_{i,j}\exp(c_{i,j}f_{i,j})(\bm v)$ are precisely $C_{i_1,\ldots,i_k}$(c). In view of the additional factor $z_{k,k}$, the kernel of $\varepsilon$ is a $\wt$-homogeneous ideal that contains $I$.

We are left to show that the set of points of the form $\prod_{i,j}\exp(c_{i,j}f_{i,j})(\bm v)$ is open in $F$ or, sufficiently, that the set of products of the form $\prod_{i,j}\exp(c_{i,j}f_{i,j})$ is open in $N$. In fact, induction on $n$ easily shows that the set of such products is all of $N$. For the induction step one writes $N=N_{n-1}\exp(\fn_1)$ where $N_{n-1}$ is the exponential of the subalgebra spanned by $f_{i,j}$ with $i>1$. 
\end{proof}

Now, our analog for GT degenerations.
\begin{theorem}\label{kernelexp}
$\initial_{\grad^S} I$ is the kernel of the map $\varepsilon^S$ from $R$ to $Q$ sending $X_{i_1,\ldots,i_k}$ to $\initial_{\grad^A}(z_{k,k}C_{i_1,\ldots,i_k})$. 
\end{theorem}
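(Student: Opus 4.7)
The plan is to mirror the proof of Theorem~\ref{kerneldet} almost line for line, with $\delta^S$ replaced by $\varepsilon^S$ throughout. The starting observation is that expanding $\prod_{i,j}\exp(c_{i,j}f_{i,j})v_{\om_k}$ and reading off the $e_{i_1,\ldots,i_k}$-coefficient gives $C_{i_1,\ldots,i_k}=\sum_{T}\tfrac{\alpha_T}{\prod T_{i,j}!}\prod z_{i,j}^{T_{i,j}}$, summed over those $T\in\bZ_{\ge0}^{\{1\le i<j\le n\}}$ with $M_Tv_{\om_k}=\alpha_T e_{i_1,\ldots,i_k}\ne 0$. Each monomial has $\grad^A$-degree $A(T)=\deg^A M_T$, so by Lemma~\ref{minmon} all these degrees are $\ge s_{i_1,\ldots,i_k}$; since $\grad^A z_{k,k}=0$, the element $\varepsilon^S(X_{i_1,\ldots,i_k})=\initial_{\grad^A}(z_{k,k}C_{i_1,\ldots,i_k})$ is $\grad^A$-homogeneous of degree exactly $s_{i_1,\ldots,i_k}=\grad^S X_{i_1,\ldots,i_k}$, i.e., $\varepsilon^S$ intertwines the gradings $\grad^S$ and $\grad^A$.

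This homogeneity immediately yields $\initial_{\grad^S}I\subset\ker\varepsilon^S$: for $p\in I$ one has $\varepsilon(p)=0$ by Proposition~\ref{expprodclassic}, and writing each $\grad^S$-homogeneous piece $q$ in the form $\varepsilon(q)=\varepsilon^S(q)+(\grad^A\text{ strictly higher})$, the $\grad^A$-minimal component of $\varepsilon(p)$ equals $\varepsilon^S(\initial_{\grad^S}p)$ and so must vanish. For the reverse inclusion a dimension count suffices: there is a surjection $R_\la/\initial_{\grad^S}I_\la\twoheadrightarrow R_\la/(\ker\varepsilon^S|_{R_\la})$ whose source has dimension $\dim\mathcal P_\la=|\Pi_\la|$, so it is enough to exhibit $|\Pi_\la|$ linearly independent elements of $\varepsilon^S(R_\la)$.

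To produce these I will reuse the $Y_T$ construction from the proof of Theorem~\ref{kerneldet}: for each $T\in\Pi_\la$ fix a Minkowski decomposition $T=T^1_1+\ldots+T^{n-1}_{a_{n-1}}$ with $T^i_j\in\Pi_{\om_i}$ (Proposition~\ref{gtminkowski} plus Lemma~\ref{gtequiv}) and set $Y_T=\prod X^i_j\in R_\la$. When we expand $\varepsilon^S(Y_T)=\prod\varepsilon^S(X^i_j)$, each factor $\varepsilon^S(X^i_j)$ is a sum of $z$-monomials indexed by $T'_{i,j}$ in the same class as in Proposition~\ref{sumsquares}, so each summand of the product has the form $\prod_k z_{k,k}^{a_k}\cdot\prod_{i<j} z_{i,j}^{(\sum_{\alpha,\beta}T'_{\alpha,\beta})_{i,j}}$. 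By per-factor $\sq$-minimality (Proposition~\ref{sumsquares}) and the linearity of $\sq$, the unique $\sq$-smallest summand is the ``all-optimal'' one $T'_{i,j}=T^i_j$, producing the monomial $q_T=\prod_k z_{k,k}^{a_k}\prod_{i<j}z_{i,j}^{T_{i,j}}$ with nonzero coefficient; note that $q_T$ depends only on $T$, not on the chosen decomposition. Ordering $\Pi_\la$ by increasing $\sq$, the appearance of $q_{T'}$ as a monomial of $\varepsilon^S(Y_T)$ forces $\sq(T')\ge\sq(T)$ with equality only when $T'=T$, so the matrix of $q_{T'}$-coordinates of the $\varepsilon^S(Y_T)$ is upper triangular with nonzero diagonal, yielding the desired linear independence. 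The main subtlety I anticipate is precisely this triangularity step: in contrast with $\delta^S$, the expansion of $\varepsilon^S(X_{i_1,\ldots,i_k})$ may contain strictly more $z$-monomials than the determinantal one, and distinct decompositions of the same $T$ can even produce coinciding $\varepsilon^S(Y_T)$. What rescues the argument is that Proposition~\ref{sumsquares} is a strict \emph{per-factor} inequality, which together with the linearity of $\sq$ still pins down $q_T$ as the unique total $\sq$-minimum regardless of the choice of decomposition.
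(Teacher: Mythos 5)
Your proof is correct and follows the same route as the paper, which itself just states that the argument of Theorem~\ref{kerneldet} carries over verbatim; your elaboration of the triangularity step via the per-factor strictness of Proposition~\ref{sumsquares} and the linearity of $\sq$ is exactly the intended mechanism. The only point left tacit is that $\ker\varepsilon^S$ is $\wt$-homogeneous (needed so that the degree-by-degree dimension count settles the reverse inclusion for the whole ideal); this follows from the $z_{k,k}$ factors, and the paper handles it by introducing the grading $Q_\la$ on $Q$ at the start of its proof.
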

\begin{proof}
Consider a grading on $Q$ with $Q_\la$ being spanned by those monomials that for every $1\le i\le n-1$ contain the variable $z_{i,i}$ in degree $a_i$. Once again, for every monomial $p$ appearing in the polynomial $z_{k,k}C_{i_1,\ldots,i_k}$ we have $M_{T(p)} v_{\om_k}=\pm e_{i_1,\ldots,i_k}$ and exactly one of these monomials $q$ has $T(q)=T(i_1,\ldots,i_k)$. The rest of the proof repeats that of Theorem~\ref{kerneldet} verbatim modulo the appropriate substitutions.
\end{proof}

\section{The degenerate action}\label{degenact}

In this section we define an associative algebra that acts on the GT degenerate representation spaces $L_\la^S$ and give an explicit description of the embedding of $F^S$ into $L_\la^S$ in terms of this action. 

Let us consider the associative algebra $\Phi_n$ generated by elements $\{\varphi_{i,j}|1\le i<j\le n\}$ with relations $\varphi_{i_1,j_1}\varphi_{i_2,j_2}=0$ whenever $i_1>i_2$ and $\varphi_{i_,j_1}\varphi_{i_,j_2}=\varphi_{i,j_2}\varphi_{i,j_1}$ for all $1\le i<j_1<j_2\le n$. For $T\in\bZ_{\ge 0}^{\{1\le i<j\le n\}}$ let $\varphi^T\in\Phi_n$ be the product $\prod_{i,j}\varphi_{i,j}^{T_{i,j}}$ with the factors ordered by $i$ increasing from left to right (which defines $\varphi^T$ uniquely). The elements $\varphi^T$ form a basis in $\Phi_n$.

We define an action of $\Phi_n$ on the vector space $L_\la$. To do so for $1\le k\le n-1$ consider the Lie algebra $\fn_-(k)\subset\fn_-$ spanned by $f_{i,j}$ with $i\ge k$, we see that $\fn_-(1)=\fn_-$ and that $\fn_-(k)$ is a nilpotent subalgebra in $\msl_{n-k+1}$. Denote $L_\la(k)=\mU(\fn_-(k))v_\la\subset L_\la$. Note that the root vectors $-\alpha_{i,j}$ with $i\ge k$ generate a simple cone $\mathfrak c(k)\subset\fh^*$ of dimension $n-k$ with edges generated by $\alpha_i$ with $i\ge k$. One sees that $L_\la(k)$ is precisely the sum of all weight subspaces in $L_\la$ of weights $\mu$ for which $\mu-\la\in\mathfrak c(k)$.

Our action is defined as follows. For each $\varphi_{i,j}$ and a weight vector $v\in L_\lambda$ we have $\varphi_{i,j}v=f_{i,j}v$ if $v\in L_\la(i)$ and $\varphi_{i,j}v=0$ otherwise. 
\begin{proposition}\label{phionL}
This is a well-defined $\Phi_n$-module structure on $L_\la$. For every $T\in\bZ_{\ge 0}^{\{1\le i<j\le n\}}$ we have $\varphi^T v_\la=M_T v_\la$.
\end{proposition}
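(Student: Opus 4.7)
My plan is to verify the two defining relations of $\Phi_n$ hold on weight vectors in $L_\la$ (giving well-definedness) and then compute $\varphi^T v_\la$ by inducting on the index $i$. The core observation is that $L_\la(i)$ can be characterized on the weight level: a weight vector of weight $\mu$ lies in $L_\la(i)$ if and only if $\la - \mu$ lies in the cone $-\mathfrak c(i)$ generated by the simple roots $\alpha_l$ with $l \ge i$; hence the action of each $\varphi_{i,j}$ is well-defined on weight vectors and extends linearly.

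For the first relation $\varphi_{i_1,j_1}\varphi_{i_2,j_2}=0$ with $i_1>i_2$, I take a weight vector $v$ of weight $\mu$; the only nontrivial case is $v\in L_\la(i_2)$, i.e.\ $\la-\mu=\sum_{l\ge i_2}c_l\alpha_l$ with $c_l\ge 0$. Then $f_{i_2,j_2}v$ has weight $\mu-\alpha_{i_2,j_2}$, and the coefficient of $\alpha_{i_2}$ in $\la-\mu+\alpha_{i_2,j_2}$ equals $c_{i_2}+1>0$, so this weight is not in $\la-\mathfrak c(i_1)$ (since $i_1>i_2$ and the cone $\mathfrak c(i_1)$ does not involve $\alpha_{i_2}$); consequently $\varphi_{i_1,j_1}(f_{i_2,j_2}v)=0$. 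For the commutation $\varphi_{i,j_1}\varphi_{i,j_2}=\varphi_{i,j_2}\varphi_{i,j_1}$, the case $v\notin L_\la(i)$ gives $0=0$, while for $v\in L_\la(i)$ an analogous weight calculation shows $f_{i,j}v\in L_\la(i)$, so both sides equal $f_{i,j_1}f_{i,j_2}v=f_{i,j_2}f_{i,j_1}v$ (the $f_{i,*}$ commute in $\fn_-$ since their root sum is not a root).

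For the formula $\varphi^T v_\la = M_T v_\la$, I partition the factors by first index: writing $T^{(i)}$ for the restriction of $T$ to pairs with first coordinate $i$, we have $\varphi^T=\varphi^{T^{(1)}}\cdots\varphi^{T^{(n-1)}}$ and similarly for $M_T$. I apply the operators to $v_\la$ from right to left, proving by downward induction on $k$ that
\[
\varphi^{T^{(k)}}\varphi^{T^{(k+1)}}\cdots\varphi^{T^{(n-1)}}v_\la = M_{T^{(k)}}M_{T^{(k+1)}}\cdots M_{T^{(n-1)}}v_\la \in L_\la(k).
\]
The base case $k=n-1$ is immediate since $v_\la\in L_\la(n-1)$ trivially. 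For the step, by induction the vector before applying $\varphi^{T^{(k)}}$ lies in $L_\la(k+1)\subset L_\la(k)$, and each successive application of some $\varphi_{k,j}$ keeps the vector inside $L_\la(k)$ (the weight displacement accumulates only $\alpha_l$ with $l\ge k$), so each $\varphi_{k,j}$ coincides with $f_{k,j}$ throughout.

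The only potentially tricky point is the bookkeeping of cones in the first relation, but once one observes that acting by $f_{i_2,j_2}$ changes the coefficient of $\alpha_{i_2}$ in the weight by $-1$, the argument becomes routine; no genuine obstacle is anticipated.
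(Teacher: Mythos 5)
Your proof is correct and follows essentially the same route as the paper's (which is much terser): the first relation via the weight/cone argument showing the image of $\varphi_{i_2,j_2}$ misses $L_\la(i_1)$ for $i_1>i_2$, the second relation by viewing the $\varphi_{i,j}$ as commuting operators on $L_\la(i)$ that kill everything else, and the formula $\varphi^Tv_\la=M_Tv_\la$ by induction using that $\varphi_{i,j}$ preserves $L_\la(i)$. The only cosmetic difference is that you induct downward on the block index $k$ rather than on $\sum_{i,j}T_{i,j}$; this changes nothing of substance.
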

\begin{proof}
We verify that the considered endomorphisms of $L_\la$ satisfy the defining relations for $\Phi_n$. The image of $f_{i,j}$ intersects $L_\la(i-1)$ trivially, consequently, so does the image of $\varphi_{i,j}$. This implies that the first set of relations is satisfied. The actions of $\varphi_{i,j_1}$ and $\varphi_{i,j_2}$ commute since they both annihilate anything outside of $L_\la(i)$ and therefore may be viewed as commuting endomorphisms of $L_\la(i)$.

The second claim is easily obtained by induction on $\sum_{i,j} T_{i,j}$ via the fact that $\varphi_{i,j}$ preserves $L_\la(i)$.
\end{proof}

Now we introduce a $\bZ$-grading on the algebra $\Phi_n$ by setting $\deg^A\varphi_{i,j}=a_{i,j}$, denote by $\Phi_{n,m}$ the homogeneous components of this grading. Subsequently we obtain an increasing $\bZ$-filtration on $\Phi_n$ with components $\Phi_{n,\le m}=\bigoplus_{l\le m}\Phi_{n,l}$. On one hand, this is a filtered algebra and the associated graded algebra is again $\Phi_n$ with the same grading $\deg^A$. On the other, this filtration induces a filtration on $L_\la$ via $(L_\la)_{\le m}=\Phi_{n,\le m}v_\la$.
\begin{proposition}\label{filtrationsame}
$(L_\la)_{\le m}=(L_\la)_m$, i.e.\ the newly introduced filtration coincides with the one considered previously.
\end{proposition}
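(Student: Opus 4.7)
The plan is to combine three ingredients already in place: (i) a basis description of the grading on $\Phi_n$; (ii) the identification $\varphi^T v_\la = M_T v_\la$ from Proposition~\ref{phionL}; and (iii) Theorem~\ref{Lfiltration}, which describes $(L_\la)_m$ explicitly in terms of ordered monomials in $\mU(\fn_-)$. No serious obstacle is expected; the statement is almost a tautology once the right pieces are assembled.

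First I would observe that the elements $\varphi^T$ with $T \in \bZ_{\ge 0}^{\{1\le i<j\le n\}}$ form a basis of $\Phi_n$, and that $\varphi^T$ is $\deg^A$-homogeneous with $\deg^A \varphi^T = A(T)$. Consequently $\Phi_{n,\le m}$ has a basis given by those $\varphi^T$ with $A(T) \le m$. Applying this basis to $v_\la$ and using the identity $\varphi^T v_\la = M_T v_\la$ from Proposition~\ref{phionL}, we obtain
\[
(L_\la)_{\le m} = \Phi_{n,\le m}\, v_\la = \operatorname{span}\bigl\{M_T v_\la : T \in \bZ_{\ge 0}^{\{1\le i<j\le n\}},\ A(T) \le m\bigr\}.
\]

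Next I would recall that by definition $\mU(\fn_-)_m$ is spanned by ordered monomials of $\deg^A$-degree at most $m$, which is precisely the set of $M_T$ with $A(T)\le m$. Therefore
\[
\mU(\fn_-)_m\, v_\la = \operatorname{span}\bigl\{M_T v_\la : A(T)\le m\bigr\},
\]
and the two spans coincide. Invoking Theorem~\ref{Lfiltration}, which asserts $(L_\la)_m = \mU(\fn_-)_m v_\la$, we conclude $(L_\la)_{\le m} = (L_\la)_m$, as required.
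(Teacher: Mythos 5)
Your proof is correct and follows exactly the paper's route: the paper likewise deduces the statement immediately from the identity $\varphi^T v_\la = M_T v_\la$ of Proposition~\ref{phionL} together with Theorem~\ref{Lfiltration}. You have merely written out the intermediate identification of $\Phi_{n,\le m}v_\la$ with the span of the $M_T v_\la$ for $A(T)\le m$, which the paper leaves implicit.
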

\begin{proof}
This is immediate from the second part of Proposition~\ref{phionL} and Theorem~\ref{Lfiltration}.
\end{proof}
Let us turn the associated graded space $L_\la^S$ into a $\Phi_n$-module by degenerating the action on $L_\la$. We have the surjections $(L_\la)_m\to (L_\la^S)_m$ with kernels $(L_\la)_{m-1}$ and the maps $\varphi_{i,j}:(L_\la)_m\to(L_\la)_{m+a_{i,j}}$. This induces maps $\varphi_{i,j}:(L_\la^S)_m\to(L_\la^S)_{m+a_{i,j}}$ which are summed over $m$ to provide maps $\varphi_{i,j}:L_\la^S\to L_\la^S$. 

Let $v_\la^S$ be the image of $v_\la\in (L_\la)_0$ in $(L_\la^S)_0$.
\begin{proposition}\label{phionLS}
This is a well-defined $\Phi_n$-module structure on $L_\la^S$. The action on $v_\la^S$ is described as follows. For $T\in\bZ_{\ge 0}^{\{1\le i<j\le n\}}$ the vector $\varphi^T v_\la^S$ is the projection of $M_T v_\la\in(L_\la)_{A(T)}$ to $(L_\la^S)_{A(T)}$ (thus $\varphi^T v_\la^S=0$ if $M_T$ is not $L_\la$-optimal).
\end{proposition}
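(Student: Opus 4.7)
The plan is to first establish that the $\Phi_n$-action on $L_\la$ is filtration-compatible (namely that $\varphi_{i,j}$ shifts filtration degree by at most $a_{i,j}$), and then appeal to the standard associated-graded formalism: a filtered module over a filtered algebra with a homogeneous set of defining relations descends to a graded module over the associated graded algebra.

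For the filtration-compatibility, the crucial input is Proposition~\ref{filtrationsame}, which identifies $(L_\la)_m$ with $\Phi_{n,\le m}v_\la$. Since $\varphi_{i,j}$ lies in $\Phi_{n,\le a_{i,j}}$, one immediately obtains $\varphi_{i,j}(L_\la)_m\subset \Phi_{n,\le m+a_{i,j}}v_\la=(L_\la)_{m+a_{i,j}}$, so the induced maps $\varphi_{i,j}\colon(L_\la^S)_m\to(L_\la^S)_{m+a_{i,j}}$ are well defined. To verify that the defining relations of $\Phi_n$ continue to hold on $L_\la^S$, observe that by Proposition~\ref{phionL} they hold as genuine operator identities on $L_\la$: the annihilation relations $\varphi_{i_1,j_1}\varphi_{i_2,j_2}=0$ (for $i_1>i_2$) descend trivially to the associated graded, while the commutations $\varphi_{i,j_1}\varphi_{i,j_2}=\varphi_{i,j_2}\varphi_{i,j_1}$ are homogeneous of $\deg^A$ equal to $a_{i,j_1}+a_{i,j_2}$ and hence also descend. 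This yields the first assertion.

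For the second assertion the plan is to induct on $\sum_{i,j}T_{i,j}$, with the base case $T=0$ being immediate. For the inductive step, write $\varphi^T=\varphi_{i,j}\varphi^{T'}$ where $\varphi_{i,j}$ is the leftmost factor in the prescribed ordered product and $T'$ is obtained from $T$ by decreasing $T_{i,j}$ by one; the analogous factorization $M_T=f_{i,j}M_{T'}$ then also holds in $\mU(\fn_-)$. By the inductive hypothesis $\varphi^{T'}v_\la^S$ is the image of $M_{T'}v_\la\in(L_\la)_{A(T')}$ in $(L_\la^S)_{A(T')}$, and by the very definition of the degenerated action, applying $\varphi_{i,j}$ produces the image of $\varphi_{i,j}M_{T'}v_\la$ in $(L_\la^S)_{A(T)}$. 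Since all factors in $M_{T'}$ have first index $\ge i$, we have $M_{T'}v_\la\in L_\la(i)$, so by the definition of the $\Phi_n$-action $\varphi_{i,j}M_{T'}v_\la=f_{i,j}M_{T'}v_\la=M_Tv_\la$, closing the induction.

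The step with any real substance is identifying the filtration $(L_\la)_m$ with $\Phi_{n,\le m}v_\la$, which however has already been carried out in Proposition~\ref{filtrationsame}; once that bridge is in place, the argument is largely bookkeeping. The only thing to be careful about is circular reasoning, since $(L_\la)_m$ was originally defined via the embedding in $U_\la$ rather than as $\Phi_{n,\le m}v_\la$, and it is precisely Proposition~\ref{filtrationsame} that makes the two descriptions interchangeable.
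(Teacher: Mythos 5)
Your proposal is correct and follows essentially the same route as the paper: well-definedness via filtration-compatibility of the operators (which the paper also grounds in Proposition~\ref{filtrationsame}) plus the fact that the defining relations already hold on $L_\la$ by Proposition~\ref{phionL}, and the identification of $\varphi^T v_\la^S$ with the projection of $M_T v_\la$. The only cosmetic difference is that you re-derive $\varphi^T v_\la = M_T v_\la$ by an explicit induction on $\sum T_{i,j}$, whereas the paper simply cites the second part of Proposition~\ref{phionL} (which was itself proved by the same induction).
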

\begin{proof}
To compute the image of a vector $v\in(L_\la^S)_m$ under the action of $\varphi_{i,j}$ one may choose a preimage $v'\in(L_\la)_m$ of $v$ and take the image of $\varphi_{i,j}v'\in(L_\la)_{m+a_{i,j}}$ in $(L_\la^S)_{m+a_{i,j}}$. The first claim now follows from the first part of Proposition~\ref{phionL}.

$\varphi^T v_\la^S$ is the projection of $\varphi^T v_\la\in (L_\la)_{A(T)}$ to $(L_\la^S)_{A(T)}$ and $\varphi^T v_\la=M_T v_\la$, therefrom we obtain the second claim.
\end{proof}

In particular, the above proposition combined with Proposition~\ref{basiscor} have the following consequence.
\begin{cor}\label{degenbasis}
For an integral dominant weight $\la$ the set of vectors $\{\varphi^Tv_\la^S,T\in\Pi_\la\}$ is a basis in $L_\la^S$.
\end{cor}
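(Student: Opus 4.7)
The plan is to combine the explicit description of $\varphi^T v_\la^S$ given by Proposition~\ref{phionLS} with a dimension count and the triangularity machinery already developed for Theorems~\ref{basis} and~\ref{Lfiltration}. Proposition~\ref{phionLS} identifies $\varphi^T v_\la^S$ with the image of $M_T v_\la$ in $(L_\la^S)_{A(T)}$, and Proposition~\ref{basiscor} says this image is nonzero for every $T\in\Pi_\la$. Since Corollary~\ref{dimension} gives $|\Pi_\la|=\dim L_\la=\dim L_\la^S$, proving the claim reduces to linear independence, and since the $\varphi^T v_\la^S$ are homogeneous of degree $A(T)$, it suffices to verify linear independence inside each graded component $(L_\la^S)_m$ separately.

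Fix $m$ and suppose $w:=\sum_{T\in\Pi_\la,\,A(T)=m} c_T M_T v_\la$ lies in $(L_\la)_{m-1}$; I must show all $c_T$ vanish. The argument uses the direct sum decomposition $U_\la=\bigoplus_{T'} U_{T'}$ from the proof of Theorem~\ref{basis} together with the resulting projections $\pi_{T'}\colon U_\la\twoheadrightarrow U_{T'}$. Combining Lemma~\ref{minmon} and Proposition~\ref{sumsquares} exactly as in that proof, every nonzero summand of $M_T v_\la$ inside $U_\la$ lies in some $U_{T'}$ with either $A(T')<A(T)$, or $A(T')=A(T)$ and $\sq(T')<\sq(T)$, or $T'=T$; moreover the $T'=T$ term always occurs and is nonzero, by the Minkowski sum property (Proposition~\ref{gtminkowski}) which produces a ``standard'' decomposition for any $T\in\Pi_\la$.

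Assuming for contradiction that some $c_T\ne 0$, choose $T_0$ with $c_{T_0}\ne 0$ of \emph{maximal} $\sq(T_0)$ (all the relevant $T$'s already share $A(T)=m$). For any other $T\ne T_0$ with $c_T\ne 0$ the trichotomy above, together with this maximality, rules out any $U_{T_0}$-component in $M_T v_\la$; hence $\pi_{T_0}(w)=c_{T_0}\cdot\pi_{T_0}(M_{T_0}v_\la)$, which is nonzero. On the other hand $U_{T_0}\subset (U_\la)_{A(T_0)}=(U_\la)_m$, whereas $(L_\la)_{m-1}=L_\la\cap(U_\la)_{\le m-1}$ by the definition of the induced filtration in Section~\ref{generalities}, so $\pi_{T_0}(w)=0$. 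This contradiction forces every $c_T$ to vanish, completing the proof. The only delicate point is getting the direction of the $\sq$-comparison right when choosing $T_0$: one must pick maximal $\sq(T_0)$, dictated by the direction $\sq(T')\le\sq(T)$ of the inequality in the decomposition analysis; beyond that, no new ideas are needed past the machinery already in place.
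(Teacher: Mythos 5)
Your argument is correct and is essentially the one the paper relies on: the corollary is deduced from Propositions~\ref{phionLS} and~\ref{basiscor} together with the dimension count, and the underlying content is exactly the $U_T$-decomposition and the $(A,\sq)$-triangularity from the proofs of Theorems~\ref{basis} and~\ref{Lfiltration} that you spell out. Your resolution of the ``delicate point'' is also the right one: since the off-diagonal components of $M_Tu_\la$ land in summands $U_{T'}$ with $\sq(T')<\sq(T)$ (when $A(T')=A(T)$), the element $T_0$ must be chosen with \emph{maximal} $\sq$ among the terms of top $\deg^A$-degree.
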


Our next goal is, given a $\la=(a_1,\ldots,a_{n-1})$, to define a $\Phi_n$-module structure on \[U_\la^S=(L_{\omega_1}^S)^{\otimes a_1}\otimes\ldots\otimes (L_{\omega_{n-1}}^S)^{\otimes a_{n-1}}.\] In fact, we define a monoidal structure on the category $\mathcal C$ of finite-dimensional $\Phi_n$-modules $L$ that are also equipped with an $\fh$-action with the following properties. First, $L$ is a direct sum of its $\fh$-weight spaces. Second, for a $\fh$-weight vector $v\in L$ and $h\in\fh$ we one has $h(\varphi_{i,j}(v))=\varphi_{i,j}(h(v))-h(\alpha_{i,j})\varphi_{i,j}(v)$, i.e.\ $\varphi_{i,j}$ decreases the weight by $\alpha_{i,j}$. Finally, among all weights with nonzero multiplicities in $L$ there exists a single highest weight $\la$ (i.e. $\la$ is obtained from any other by adding a sum of positive roots) such that $\varphi_{i,j}(v)=0$ for any $\varphi_{i,j}$ and any weight vector $v\in L$ of weight $\mu$ such that $\mu-\la\notin\mathfrak c(i)$. We refer to this $\la$ as the highest weight of $L$. It is easily seen that each $L_\la^S$ inherits a weight decomposition from $L_\la$, lies in $\mathcal C$ and has highest weight $\la$.

%First we observe that each $L_{w_k}^S$ is equiped with an action of the Cartan subalgebra $\mathfrak h$. To do so we note that a vector $e_{i_1,\ldots,i_k}$ lies in $(L_{\om_k})_{s_{i_1,\ldots,i_k}}$ but not in $(L_{\om_k})_{s_{i_1,\ldots,i_k}-1}$ (due to Lemma~\ref{minmon}), let $e_{i_1,\ldots,i_k}^S$ be the image of this vector in $(L_{w_k}^S)_{s_{i_1,\ldots,i_k}}$. It now suffices to say that $e_{i_1,\ldots,i_k}^S$ has the same weight as $e_{i_1,\ldots,i_k}$, since the vectors $e_{i_1,\ldots,i_k}^S$ form a basis in $L_{w_k}^S$. This structure induces an action of $\mathfrak h$ on the tensor product $U_\la^S$ which decomposes into the direct sum of its weight subspaces.

For modules $L_1$, $L_2$ in $\mathcal C$ with highest weights $\la_1$, $\la_2$ denote $U=L_1\otimes L_2$. Note that a weight decomposition is induced on $U$ and for $1\le k\le n-1$ let $U(k)$ be the sum of weight subspaces in $U$ of weights $\mu$ such that $\mu-(\la_1+\la_2)\in\mathfrak c(k)$. For weight vectors $v_1\in L_1$, $v_2\in L_2$ and each $\varphi_{i,j}$ %and a product of weight vectors \[v=v^1_1\otimes\ldots\otimes v^1_{a_1}\otimes\ldots\otimes v^{n-1}_1\otimes\ldots\otimes v^{n-1}_{a_{n-1}}\in U_\la^S\] 
we set $\varphi_{i,j}(v_1\otimes v_2)=0$ if $v_1\otimes v_2\notin U(i)$, otherwise we set 
\begin{equation}\label{phionUdef}
\varphi_{i,j}(v_1\otimes v_2)=\varphi_{i,j}(v_1)\otimes v_2+v_1\otimes\varphi_{i,j}(v_2).%\sum_{k, l} v^1_1\otimes \ldots\otimes\varphi_{i,j}(v^k_l)\otimes\ldots\otimes v^{n-1}_{a_{n-1}},
\end{equation}
%i.e the sum of the expressions obtained from $v$ by applying $\varphi_{i,j}$ to each of the tensor factors.
\begin{proposition}
This is a well-defined $\Phi_n$-module structure on $U$. 
\end{proposition}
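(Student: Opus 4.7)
The plan is to verify both the well-definedness of each operator $\varphi_{i,j}$ on $U$ and the two families of relations defining $\Phi_n$. For well-definedness, I would first note that $U$ inherits a weight decomposition, so each subspace $(L_1)_\mu\otimes(L_2)_\nu$ is a direct summand; the formula~(\ref{phionUdef}) is bilinear on each such summand (since the condition $v_1\otimes v_2\in U(i)$ depends only on $\mu+\nu$, not on the chosen tensor representatives) and therefore extends uniquely to a well-defined linear endomorphism of $U$.

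The main technical input will be a lemma: for weight vectors $v_1$ of weight $\mu$ and $v_2$ of weight $\nu$, one has $v_1\otimes v_2\in U(k)$ if and only if $v_1\in L_1(k)$ and $v_2\in L_2(k)$. This follows because $\la_1$ and $\la_2$ are highest weights, so both $\mu-\la_1$ and $\nu-\la_2$ are non-positive integer combinations of simple roots; their sum has zero coefficient on every $\alpha_l$ with $l<k$ if and only if each summand does. Consequently, whenever the nontrivial case of~(\ref{phionUdef}) applies, both factors lie in their respective $L_s(i)$ and $\varphi_{i,j}$ acts on each of them by $f_{i,j}$.

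To check $\varphi_{i_1,j_1}\varphi_{i_2,j_2}=0$ for $i_1>i_2$, I would apply the composition to a pure weight tensor $v_1\otimes v_2$. If $v_1\otimes v_2\notin U(i_2)$ the claim is trivial; otherwise $\varphi_{i_2,j_2}(v_1\otimes v_2)=f_{i_2,j_2}(v_1)\otimes v_2+v_1\otimes f_{i_2,j_2}(v_2)$, and each summand's weight differs from $\la_1+\la_2$ by a combination whose coefficient on $\alpha_{i_2}$ has dropped by at least $1$ and is therefore $\le-1$. Since $i_2<i_1$, the lemma forces each summand outside $U(i_1)$, so $\varphi_{i_1,j_1}$ annihilates it. For the commutation $\varphi_{i,j_1}\varphi_{i,j_2}=\varphi_{i,j_2}\varphi_{i,j_1}$, the only nontrivial case is $v_1\otimes v_2\in U(i)$; the intermediate tensor $f_{i,j_2}(v_1)\otimes v_2+v_1\otimes f_{i,j_2}(v_2)$ still lies in $U(i)$ because $f_{i,j_2}\in\fn_-(i)$ preserves $L_1(i)$ and $L_2(i)$, and applying $\varphi_{i,j_1}$ produces four pure tensors symmetric under $j_1\leftrightarrow j_2$ thanks to $[f_{i,j_1},f_{i,j_2}]=0$ in $\mU(\fn_-)$ (the hypothetical bracket would be a root vector of weight $-\alpha_{i,j_1}-\alpha_{i,j_2}$, which is not a root since the coefficient of $\alpha_i$ equals $2$).

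The main obstacle is to isolate the correct lemma connecting the case split in the definition of $\varphi_{i,j}$ with the weight shift $\mu\mapsto\mu-\alpha_{i,j}$; once that is in hand, both relations reduce to a uniform derivation-style computation together with the elementary commutativity of the $f_{i,\cdot}$'s sharing a common first index.
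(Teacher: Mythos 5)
Your argument has the same skeleton as the paper's (which is only two sentences long): the first relation follows from weight bookkeeping --- the image of $\varphi_{i_2,j_2}$ has strictly negative $\alpha_{i_2}$-coefficient relative to $\la_1+\la_2$ and therefore lies outside $U(i_1)$ whenever $i_1>i_2$ --- and the second from applying the Leibniz formula twice and invoking commutativity on the two factors. Your lemma factoring the condition $v_1\otimes v_2\in U(k)$ into separate conditions on $v_1$ and $v_2$ is correct and makes explicit a step the paper leaves implicit; the bilinearity remark settling well-definedness is also fine.

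The one thing to repair is your systematic use of $f_{i,j}$, of $\fn_-(i)$, and of $[f_{i,j_1},f_{i,j_2}]=0$ in $\mU(\fn_-)$. The proposition is stated for arbitrary $L_1,L_2$ in the category $\mathcal C$, whose objects are abstract $\Phi_n$-modules with a compatible $\fh$-action; there is no $f_{i,j}$-action on a general object, and the case the paper actually needs --- tensoring the degenerate modules $L_{\om_k}^S$ to build $U_\la^S$ --- is precisely one where $\varphi_{i,j}$ is not the restriction of $f_{i,j}$. The fix is essentially notational: replace $f_{i,j}$ everywhere by the given action of $\varphi_{i,j}$ on each factor. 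Then the commutativity of $\varphi_{i,j_1}$ and $\varphi_{i,j_2}$ on $L_1$ and $L_2$ is part of the hypothesis that these are $\Phi_n$-modules (rather than a consequence of $-\alpha_{i,j_1}-\alpha_{i,j_2}$ failing to be a root), and the fact that $\varphi_{i,j_2}$ preserves the sum of weight spaces of weights $\mu$ with $\mu-\la_s\in\mathfrak c(i)$ follows from the weight-shift axiom of $\mathcal C$ together with $-\alpha_{i,j_2}\in\mathfrak c(i)$. With these substitutions your proof is complete and matches the paper's.
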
 
\begin{proof}
We see that the action of $\varphi_{i,j}$ subtracts $\alpha_{i,j}$ from the weight of a weight vector in $U$ and that the image of this action lies in $U(i)$ but intersects $U(i+1)$ trivially. We deduce $\varphi_{i_1,j_1}\varphi_{i_2,j_2}U=0$ whenever $i_1>i_2$. The commutation of the actions of $\varphi_{i,j_1}$ and $\varphi_{i,j_2}$ on $U$ follows from the definition~(\ref{phionUdef}) and the fact that they commute on $L_1$ and $L_2$. 
\end{proof}
It is now obvious that $U$ lies in $\mathcal C$ with highest weight $\la_1+\la_2$ and that the defined tensor product in $\mathcal C$ is associative and symmetric. In particular, this lets us view $U_\la^S$ as a $\Phi_n$-module.

%\begin{remark}
%One could consider the category of finite-dimensional $\Phi_n$-modules $L$ that are also equipped with an $\fh$-action in such a way that for a $\fh$-weight vector $v\in L$ and $h\in\fh$ we one has $h(\varphi_{i,j}(v))=\varphi_{i,j}(h(v))-h(\alpha_{i,j})\varphi_{i,j}(v)$, i.e.\ $\varphi_{i,j}$ decreases the weight by $\alpha_{i,j}$. All of the $\Phi_n$-modules we consider have a natural weight structure and lie in this category. The above tensor product construction generalizes straightforwardly to provide a monoidal structure on this category, we, however, will not need this kind of generality.
%\end{remark}

Next, note that a vector $e_{i_1,\ldots,i_k}$ lies in $(L_{\om_k})_{s_{i_1,\ldots,i_k}}$ but not in $(L_{\om_k})_{s_{i_1,\ldots,i_k}-1}$ (due to Lemma~\ref{minmon}), let $e_{i_1,\ldots,i_k}^S$ be the image of this vector in $(L_{w_k}^S)_{s_{i_1,\ldots,i_k}}$. We have linear isomorphisms between $L_{\om_k}$ and $L_{\om_k}^S$ sending $e_{i_1,\ldots,i_k}$ to $e_{i_1,\ldots,i_k}^S$ which induce a linear isomorphism between $U_\la$ and $U_\la^S$. Now recall the embedding $L_\la^S\subset U_\la$ from Section~\ref{generalities}. Denote the composition of the latter embedding and former isomorphism $\iota:L_\la^S\hookrightarrow U_\la^S$. Note that \[\iota(v_\la^S)=u_\la^S=(v_{\omega_1}^S)^{\otimes a_1}\otimes\ldots\otimes (v_{\omega_{n-1}}^S)^{\otimes a_{n-1}}.\]
\begin{lemma}
The embedding $\iota$ is a homomorphism of $\Phi_n$-modules.
\end{lemma}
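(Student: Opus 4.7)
The plan is to realize $\iota$ as the associated graded of a $\Phi_n$-equivariant, filtered inclusion $L_\la\hookrightarrow U_\la$ at the non-degenerate level. I would first endow $U_\la$ itself with a $\Phi_n$-action extending the given one on $L_\la$, then show this action respects the $\grad^S$-filtration, and finally identify the induced action on the associated graded with the monoidal $\Phi_n$-action defining $U_\la^S$. This makes $L_\la\hookrightarrow U_\la$ a morphism of filtered $\Phi_n$-modules, and its associated graded coincides with $\iota$, hence is automatically $\Phi_n$-equivariant.

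Each fundamental representation $L_{\om_k}$ lies in the category $\mathcal C$ via the action of Proposition~\ref{phionL}, so iterating the monoidal structure produces a $\Phi_n$-action on $U_\la=L_{\om_1}^{\otimes a_1}\otimes\ldots\otimes L_{\om_{n-1}}^{\otimes a_{n-1}}$. The key combinatorial observation is that a simple tensor of weight vectors $v_1\otimes\cdots\otimes v_N$ with weights $\mu_1,\ldots,\mu_N$ lies in $U_\la(i)$ if and only if each factor $v_p$ lies in the corresponding $L_{\om_{k_p}}(i)$: since each $\om_{k_p}-\mu_p$ is a non-negative combination of simple roots, the sum $\la-\sum_p\mu_p$ can avoid all simple roots $\alpha_l$ with $l<i$ only when every summand does. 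Under this hypothesis the Leibniz rule for $\varphi_{i,j}$ on the tensor product reduces to the Leibniz rule for $f_{i,j}$, so the $\Phi_n$-action on $U_\la$ agrees with $f_{i,j}$ on $U_\la(i)$ and vanishes elsewhere. In particular it preserves $L_\la\subset U_\la$ (since $U_\la(i)\cap L_\la=L_\la(i)$) and restricts there to the action of Proposition~\ref{phionL}.

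To verify the filtration compatibility $\varphi_{i,j}(U_\la)_{\le m}\subset(U_\la)_{\le m+a_{i,j}}$, the Leibniz rule reduces the claim to a single-factor statement: whenever $e_{i_1,\ldots,i_k}\in L_{\om_k}(i)$ and $f_{i,j}e_{i_1,\ldots,i_k}=\pm e_{j_1,\ldots,j_k}$, one has $s_{j_1,\ldots,j_k}\le s_{i_1,\ldots,i_k}+a_{i,j}$. This is Lemma~\ref{minmon} applied to $f_{i,j}\cdot M_{T(i_1,\ldots,i_k)}$, which is an ordered monomial because the condition $e_{i_1,\ldots,i_k}\in L_{\om_k}(i)$ forces $i_l=l$ for $l<i$ and hence every factor of $M_{T(i_1,\ldots,i_k)}$ to have first index at least $i$. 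Identifying the associated graded of this action with the monoidal action on $U_\la^S$ is then formal: the cone condition $U(i)$ is purely weight-theoretic and so agrees verbatim with $U^S(i)$, and extracting the top $\grad^S$-component of a Leibniz sum retains exactly the summands achieving the bound $s_{j^p_1,\ldots}=s_{i^p_1,\ldots}+a_{i,j}$, which matches the definition of $\varphi_{i,j}$ on each $L_{\om_{k_p}}^S$. The one step requiring real work is the filtration compatibility, pivoting on the ordered-monomial calculation via Lemma~\ref{minmon}; with that in hand, $\iota$ is the associated graded of a $\Phi_n$-equivariant, filtered inclusion and therefore $\Phi_n$-equivariant itself.
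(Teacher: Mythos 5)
Your proof is correct, but it takes a genuinely different route from the paper's. The paper argues pointwise on the cyclic generator: since $L_\la^S=\Phi_n v_\la^S$ by Corollary~\ref{degenbasis}, it suffices to check $\iota(\varphi^T v_\la^S)=\varphi^T u_\la^S$ for every basis monomial $\varphi^T$, and this is done by expanding both $\varphi^T u_\la^S$ and the projection of $M_T u_\la$ over all decompositions $T=\sum T^k_l$ and observing that exactly the summands in which every $M_{T^k_l}$ is $L_{\om_k}$-optimal survive on both sides. You instead lift the whole picture to the non-degenerate level: you put a $\Phi_n$-module structure on $U_\la$ via the monoidal structure on $\mathcal C$, identify it concretely as ``$f_{i,j}$ on $U_\la(i)$, zero on weight vectors outside'' using the correct observation that a simple tensor of weight vectors lies in $U_\la(i)$ iff every factor lies in its $L_{\om_{k_p}}(i)$, check filtration compatibility through the single-factor estimate $s_{j_1,\ldots,j_k}\le s_{i_1,\ldots,i_k}+a_{i,j}$ (your reduction to Lemma~\ref{minmon} via the ordered monomial $f_{i,j}M_{T(i_1,\ldots,i_k)}$, using that $e_{i_1,\ldots,i_k}\in L_{\om_k}(i)$ forces $i_l=l$ for $l<i$, is exactly right), and conclude by functoriality of the associated graded. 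Your version is more structural: it isolates the one genuinely quantitative input (the single-factor degree bound) and produces as a byproduct a filtered $\Phi_n$-module structure on $U_\la$ whose associated graded is $U_\la^S$, which the paper never makes explicit. The paper's version is shorter given that Proposition~\ref{phionLS} and Corollary~\ref{degenbasis} are already in place and requires no discussion of the action on all of $U_\la$. One point worth making explicit in your write-up is that the filtration on $L_\la$ induced from $(U_\la)_{\le m}$ is the one with respect to which the $\Phi_n$-module $L_\la^S$ is defined (this is the content of Proposition~\ref{filtrationsame}), so that the associated graded of your restricted action really is the module of Proposition~\ref{phionLS}.
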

\begin{proof}
Since $L_\la^S$ is generated by $v_\la^S$ as a $\Phi_n$ module (due to Corollary~\ref{degenbasis}), it suffices to show that for any $\varphi^T$ we have $\iota(\varphi^T v_\la^S)=\varphi^T u_\la^S$.

The vector $\varphi^T u_\la^S$ can be written explicitly as 
\begin{equation}\label{phiTulaS}
\sum_{\sum T^k_l=T} (\varphi^{T_1^1} v_{\om_1}^S)\otimes \ldots\otimes(\varphi^{T_l^k} v_{\om_k}^S)\otimes\ldots\otimes (\varphi^{T^{n-1}_{a_{n-1}}} v_{\om_{n-1}}^S).
\end{equation}
Here we sum over all decompositions of $T$ into a sum of $T^k_l\in\bZ^{\{1\le i<j\le n\}}$ with $1\le k\le n-1$ and $1\le l\le a_k$. Note that, in view Proposition~\ref{phionLS}, only those summands are nonzero in which each of the monomials $M_{T^k_l}$ is $L_{\om_k}$-optimal.

Now consider $\iota(\varphi^T v_\la^S)$. The image of $\varphi^T v_\la^S\in L_\la^S$ under the embedding into $U_\la$ is seen to coincide with the projection of $M_T u_\la\in(L_\la)_{A(T)}\subset(U_\la)_{\le A(T)}$ to $(L_\la^S)_{A(T)}\subset(U_\la)_{A(T)}$ due to Proposition~\ref{phionLS}. Now, \[M_T u_\la=\sum_{\sum T^k_l=T} (M_{T_1^1} v_{\om_1})\otimes \ldots\otimes(M_{T_l^k} v_{\om_k})\otimes\ldots\otimes (M_{T^{n-1}_{a_{n-1}}} v_{\om_{n-1}})\] with $\{T^k_l\}$ ranging over the same set of partitions as in~(\ref{phiTulaS}). Observe that unless each $M_{T^k_l}$ is $L_{\om_k}$-optimal in a summand, this summand lies in $(U_\la)_{\le A(T)-1}$. Therefore, when taking the projection onto $(L_\la)_{A(T)}\subset(U_\la)_{\le A(T)}$ only those summands in which each $M_{T^k_l}$ is $L_{\om_k}$-optimal remain. 

Finally observe that if for a $L_{\om_k}$-optimal $M_{T^k_l}$ we have $M_{T_l^k} v_{\om_k}=\pm e_{i_1,\ldots,i_k}$, then, due to Proposition~\ref{phionLS}, $\varphi^{T_l^k} v_{\om_k}^S=\pm e_{i_1,\ldots,i_k}^S$. Thus our bijection from $L_{\om_k}$ to $L_{\om_k}^S$ maps $M_{T_l^k} v_{\om_k}$ to $\varphi^{T_l^k} v_{\om_k}^S$ and the assertion follows.
\end{proof}

\begin{cor}\label{degcartcomp}
The $\Phi_n$-submodule in $U_\la^S$ generated by $u_\la^S$ is isomorphic to $L_\la^S$ (as a $\Phi_n$-module).
\end{cor}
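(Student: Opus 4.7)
The plan is to deduce this corollary almost immediately from the preceding Lemma. That Lemma establishes that the map $\iota : L_\la^S \hookrightarrow U_\la^S$ is a homomorphism of $\Phi_n$-modules, and by its construction as a composition of the embedding $L_\la^S\subset U_\la$ with a linear isomorphism $U_\la\cong U_\la^S$, it is in particular injective. We also have the explicit identification $\iota(v_\la^S)=u_\la^S$ noted just before the Lemma.

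To conclude, I would observe that $L_\la^S$ is generated by $v_\la^S$ as a $\Phi_n$-module: this is Corollary~\ref{degenbasis}, which says that the vectors $\varphi^T v_\la^S$ with $T\in\Pi_\la$ span (indeed, form a basis of) $L_\la^S$. Combining this with the fact that $\iota$ is a $\Phi_n$-module homomorphism, we get
\[
\iota(L_\la^S) \;=\; \iota(\Phi_n\cdot v_\la^S) \;=\; \Phi_n\cdot \iota(v_\la^S) \;=\; \Phi_n\cdot u_\la^S,
\]
i.e.\ the image of $\iota$ is exactly the $\Phi_n$-submodule of $U_\la^S$ generated by $u_\la^S$. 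Since $\iota$ is injective, it restricts to an isomorphism of $\Phi_n$-modules between $L_\la^S$ and this submodule.

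There is essentially no obstacle here, as all the work has been carried out in the preceding Lemma (which is where the genuine content lies — namely the compatibility of the action of $\Phi_n$ on $L_\la^S$ defined via degeneration with the coproduct-like formula~(\ref{phionUdef}) defining the $\Phi_n$-action on tensor products). The corollary is a direct packaging of that result together with the fact that $v_\la^S$ is a cyclic vector.
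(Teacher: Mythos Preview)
Your proposal is correct and matches the paper's approach exactly: the paper states this as a corollary with no separate proof, treating it as immediate from the preceding Lemma together with the cyclicity of $v_\la^S$ (Corollary~\ref{degenbasis}), the injectivity of $\iota$, and the identity $\iota(v_\la^S)=u_\la^S$. You have spelled out precisely the intended one-line deduction.
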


Next, consider a complex vector $c=(c_{i,j},1\le i<j\le n)$. It is evident that each element $c_{i,j}\varphi_{i,j}$ acts nilpotently in the $\Phi_n$-modules $L_\la^S$ and $U_\la^S$ which allows us to consider the exponential of its action. We denote this exponential simply $\exp(c_{i,j}\varphi_{i,j})$. Furthermore, in each of these $\Phi_n$-modules we introduce the operator \[\exp(c)=\prod_{i,j}\exp(c_{i,j}\varphi_{i,j})\] where the factors are are ordered by $i$ increasing from left to right (which defines $\exp(c)$ uniquely). We may now straightforwardly transfer the actions of $\exp(c_{i,j}\varphi_{i,j})$ and $\exp(c)$ to the projectivizations of said $\Phi_n$-modules.

Let and $\bm v_\la^S$ be the point in $\bP(L_\la^S)$ corresponding to $\bC v_\la^S$. The following theorem is what we view as the main result of this paper.
\begin{theorem}\label{main}
For an integral dominant regular weight $\la$ let $E_\la$ be the image of $\bC^{\{1\le i<j\le n\}}$ in $\bP(L_\la^S)$ under the map taking $c$ to $\exp(c)\bm v_\la^S$. The Zariski closure of $E_\la$ is the degenerate flag variety $F^S$. This embedding of $F^S$ into $\bP(L_\la^S)$ coincides with the one given by Proposition~\ref{FSinPLS}
\end{theorem}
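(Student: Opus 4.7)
The plan is to compute the Pl\"ucker coordinates of the point $[\exp(c)\bm v_\la^S]$ explicitly, recognize them as initial parts of the polynomials $C_{i_1,\ldots,i_k}$ from Proposition~\ref{expprodclassic}, and then invoke Theorem~\ref{kernelexp} to conclude that the vanishing ideal of $\overline{E_\la}\subset\bP$ coincides with $\initial_{\grad^S}I$, the defining ideal of $F^S$.

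First I would use the $\Phi_n$-equivariant embedding $\iota:L_\la^S\hookrightarrow U_\la^S$ from the preceding lemma to transfer the computation to $U_\la^S$, giving $\iota(\exp(c)v_\la^S)=\exp(c)u_\la^S$. Because the rightmost factors of $\exp(c)=\prod_{i,j}\exp(c_{i,j}\varphi_{i,j})$ in the chosen order (by $i$ increasing from left to right) are applied first, at each stage the vector being acted upon lies in the subspace $U(i)$ required by~(\ref{phionUdef}); hence the Leibniz identity $\exp(c_{i,j}\varphi_{i,j})(v\otimes w)=\exp(c_{i,j}\varphi_{i,j})(v)\otimes\exp(c_{i,j}\varphi_{i,j})(w)$ applies throughout and yields
\[\exp(c)u_\la^S=\bigotimes_k\bigl(\exp(c)v_{\om_k}^S\bigr)^{\otimes a_k}.\]
Expanding the exponentials and invoking Proposition~\ref{phionLS}, the coefficient of $e_{i_1,\ldots,i_k}^S$ in $\exp(c)v_{\om_k}^S$ receives a nonzero contribution only from those $T$ with $M_Tv_{\om_k}=\pm e_{i_1,\ldots,i_k}$ and $A(T)=s_{i_1,\ldots,i_k}$, the latter being the minimum permitted by Lemma~\ref{minmon}. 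Matching with the classical expansion $C_{i_1,\ldots,i_k}(c)=\sum_{T:\,M_Tv_{\om_k}=\pm e_{i_1,\ldots,i_k}}\pm c^T/T!$ identifies this coefficient as exactly $\initial_{\grad^A}C_{i_1,\ldots,i_k}(c)$.

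Combining these, under the Segre decomposition underlying Proposition~\ref{FSinPLS} the point $[\exp(c)v_\la^S]$ originates from the point in $\bP$ whose Pl\"ucker coordinates are $\initial_{\grad^A}C_{i_1,\ldots,i_k}(c)$. For $\wt$-homogeneous $p\in R$ of weight $\la=(a_1,\ldots,a_{n-1})$, Theorem~\ref{kernelexp} specializes to
\[\varepsilon^S(p)=z_{1,1}^{a_1}\cdots z_{n-1,n-1}^{a_{n-1}}\cdot p\bigl(\initial_{\grad^A}C_{i_1,\ldots,i_k}\bigr),\]
since none of the $C_{i_1,\ldots,i_k}$ involves any $z_{l,l}$. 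Because the $z_{l,l}$ are algebraically independent from the remaining variables of $Q$, the condition $\varepsilon^S(p)=0$ is equivalent to $p(\initial_{\grad^A}C_{i_1,\ldots,i_k}(c))$ being identically zero in $c$, which is equivalent to $p$ vanishing on $E_\la$. By Theorem~\ref{kernelexp} the former condition reads $p\in\initial_{\grad^S}I$; hence the vanishing ideal of $\overline{E_\la}$ coincides with $\initial_{\grad^S}I$, so $\overline{E_\la}=F^S$ and this embedding agrees with that of Proposition~\ref{FSinPLS}.

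The main technical obstacle is justifying the Leibniz distribution of $\exp(c)$ over tensor products in this degenerate setting: since $\varphi_{i,j}$ fails the Leibniz rule outside $U(i)$, the argument is genuinely sensitive to both the descending chain $U(n-1)\subset\ldots\subset U(1)$ and the prescribed ordering of the exponential factors. A subsidiary bookkeeping step is isolating the harmless prefactor $z_{1,1}^{a_1}\cdots z_{n-1,n-1}^{a_{n-1}}$ in Theorem~\ref{kernelexp} in order to translate the algebraic condition $\varepsilon^S(p)=0$ into geometric vanishing on $E_\la$.
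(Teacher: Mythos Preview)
Your proposal is correct and follows essentially the same route as the paper. The ``main technical obstacle'' you single out---distributing $\exp(c)$ over the tensor factors despite the failure of the Leibniz rule outside $U(i)$---is precisely what the paper isolates as a separate lemma (Lemma~\ref{exptensor}), proved by the same descending induction along $U(n-1)\subset\ldots\subset U(1)$ that you sketch; you have simply inlined that argument. Your explicit handling of the prefactor $z_{1,1}^{a_1}\cdots z_{n-1,n-1}^{a_{n-1}}$ when invoking Theorem~\ref{kernelexp} is a detail the paper leaves implicit.
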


The following fact is crucial to our proof of the theorem.
\begin{lemma}\label{exptensor}
For any $c\in\bC^{\{1\le i<j\le n\}}$ and $\la=(a_1,\ldots,a_{n-1})$ we have \[\exp(c)u_\la^S=(\exp(c)v_{\omega_1}^S)^{\otimes a_1}\otimes\ldots\otimes (\exp(c)v_{\omega_{n-1}}^S)^{\otimes a_{n-1}}\in U_\la^S.\]
\end{lemma}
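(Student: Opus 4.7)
The plan is to reduce the lemma to the factorization identity
\begin{equation*}
\exp(c_{i,j}\varphi_{i,j})(v_1 \otimes v_2) = \exp(c_{i,j}\varphi_{i,j})(v_1) \otimes \exp(c_{i,j}\varphi_{i,j})(v_2)
\end{equation*}
for weight tensors $v_1 \otimes v_2 \in U(i)$, after which the full statement follows by applying the factors of $\exp(c)$ to $u_\la^S$ in the order dictated by the product. The chief subtlety is that the coproduct-like formula~(\ref{phionUdef}) only applies on the subspaces $U(i)$ (with $\varphi_{i,j}$ acting as zero elsewhere), so I will need, at every stage, to know that the intermediate vector lies in the $U(i)$ relevant to the next factor.

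The core computation is a binomial identity
\begin{equation*}
\varphi_{i,j}^k(v_1 \otimes v_2) = \sum_{l=0}^k \binom{k}{l}\,\varphi_{i,j}^l(v_1) \otimes \varphi_{i,j}^{k-l}(v_2)
\end{equation*}
for weight tensors $v_1 \otimes v_2 \in U(i)$, proved by induction on $k$ starting from~(\ref{phionUdef}). The induction rests on the following positivity splitting: if a pure weight tensor $v_1 \otimes v_2$ belongs to $U(i)$, then already $v_s \in L_s(i)$ for both $s=1,2$. Indeed, each weight difference $\mathrm{weight}(v_s)-\la_s$ is a nonnegative integer combination of simple roots, and if the sum of two such combinations only involves $\alpha_m$ with $m \ge i$ then so does each summand individually. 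Combined with the preservation $\varphi_{i,j}(U(i)) \subset U(i)$ noted in the excerpt, this ensures every term $\varphi_{i,j}^l(v_1) \otimes \varphi_{i,j}^{k-l}(v_2)$ arising on the right-hand side is itself a weight tensor in $U(i)$, so one further application of $\varphi_{i,j}$ distributes by~(\ref{phionUdef}) and the inductive step closes via Pascal's rule. Multiplying by $c_{i,j}^k/k!$ and summing yields the factorization identity; a routine induction on the number of tensor factors extends it to multi-factor pure weight tensors in $U(i)$. The same extension applies to the bundle $E_{i_0} := \prod_{j > i_0}\exp(c_{i_0,j}\varphi_{i_0,j}) = \exp\bigl(\sum_{j > i_0} c_{i_0,j}\varphi_{i_0,j}\bigr)$, where the last equality uses the commutation of the $\varphi_{i_0,j}$'s at fixed $i_0$.

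The outer argument then processes the bundles $E_{i_0}$ in decreasing order of $i_0$, matching the order in which they act within $\exp(c)$. I would maintain the invariant that after $E_{n-1},\ldots,E_{i_0+1}$ have been applied to $u_\la^S$, the resulting vector is a pure tensor $\bigotimes_{k,l} w^{(k,l)}$ with each factor $w^{(k,l)} \in L_{\om_k}^S(i_0+1)$. The invariant holds initially, since $v_{\om_k}^S$ is a highest weight vector and hence lies in every $L_{\om_k}^S(m)$. At the inductive step, the current tensor lies in $U(i_0+1) \subset U(i_0)$, so $E_{i_0}$ distributes across the tensor factors by the factorization identity of the previous paragraph; each updated factor is a sum of iterated $\varphi_{i_0,j}$-images of $w^{(k,l)} \in L_{\om_k}^S(i_0+1) \subset L_{\om_k}^S(i_0)$ and thus again lies in $L_{\om_k}^S(i_0)$ since $\varphi_{i_0,j}$ preserves this subspace. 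After the final bundle $E_1$ acts, every tensor factor has independently absorbed the full product $\exp(c)$, yielding the right-hand side of the lemma. The main obstacle I anticipate is the positivity splitting described above; once that is in hand, the remainder is bookkeeping with the nested cones $\mathfrak c(n-1) \subset \cdots \subset \mathfrak c(1)$ and the commutation relations built into $\Phi_n$.
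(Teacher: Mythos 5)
Your proof is correct and follows essentially the same route as the paper's: your binomial identity is just the explicit form of the paper's observation that the $\mathbb G_a$-action of $\exp(\bC\varphi_{i,j})$ on $U_\la^S(i)$ is the tensor product of its actions on the $L_{\om_k}^S(i)$, and your outer induction over the bundles $E_{i_0}$ in decreasing order of $i_0$ matches the paper's decreasing induction on $k$ using the truncated vectors $c(k)$. The positivity splitting you highlight is indeed the point that makes the distribution over tensor factors legitimate; the paper uses it implicitly.
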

\begin{proof}
Since $\varphi_{i,j}$ acts on all $L_{\om_k}^S(i)$ and on $U_\la^S(i)$, so does the one-dimensional Lie algebra $\bC\varphi_{i,j}$. By definition, the action of this Lie algebra on $U_\la^S(i)$ is the tensor product of its actions on the $L_{\om_k}^S(i)$. Therefore the Lie group $\exp(\bC\varphi_{i,j})=\mathbb G_a$ (i.e.\ $\bC$ under addition) also acts on these spaces and its action on $U_\la^S(i)$ is the tensor product of its actions on the $L_{\om_k}^S(i)$. This means that for any \[v=v^1_1\otimes\ldots\otimes v^{n-1}_{a_{n-1}}\in U_\la^S(i)\] we have \[\exp(c_{i,j}\varphi_{i,j})v=\exp(c_{i,j}\varphi_{i,j})v^1_1\otimes\ldots\otimes \exp(c_{i,j}\varphi_{i,j})v^{n-1}_{a_{n-1}}.\] 

Now for $1\le k\le n$ denote $c(k)$ the vector with $c(k)_{i,j}=c_{i,j}$ whenever $i\ge k$ and $c(k)_{i,j}=0$ otherwise. In particular, $c(n)=0$ and $c(1)=c$. The vector $\exp(c(k))u_\la^S$ lies in $U_\la^S(k)$ and we obtain by decreasing induction on $k$ that \[\exp(c(k))u_\la^S=(\exp(c(k))v_{\omega_1}^S)^{\otimes a_1}\otimes\ldots\otimes (\exp(c(k))v_{\omega_{n-1}}^S)^{\otimes a_{n-1}}.\qedhere\] 
\end{proof}

\begin{proof}[Proof of Theorem~\ref{main}] 
In view of Corollary~\ref{degcartcomp} it suffices to prove that $F^S$ coincides with the closure of the set $\{\exp(c)\bm u_\la^S\}\subset \bP(U_\la^S)$ where $\bm u_\la^S$ corresponds to $\bC u_\la^S$. 

Let us write out $\exp(c)u_\la^S$ as in Lemma~\ref{exptensor} and consider the tensor factor $\exp(c)v_{\omega_k}^S$. We may rewrite every $\exp(c_{i,j}\varphi_{i,j})$ as the series $1+\varphi_{i,j}+\frac{\varphi_{i,j}^2}2+\ldots$, expand the product $\exp(c)$ and then retain only those monomials $\varphi^T$ in the result for which $M_T$ is $L_{\om_k}^S$-optimal, since all others act trivially. For a $L_{\om_k}^S$-optimal monomial $M_T$, if $M_Tv_{\om_k}=\pm e_{i_1,\ldots,i_k}$, then $\varphi^T v_{\om_k}^S=\pm e_{i_1,\ldots,i_k}^S$.

Now let us consider $\prod_{i,j} \exp(c_{i,j}f_{i,j})v_{\om_k}\in L_{\om_k}$. Let us expand every $\exp(c_{i,j}f_{i,j})$ as $1+f_{i,j}+\frac{f_{i,j}^2}2+\ldots$, then expand the product and retain only the actions of $L_{\om_k}$-optimal monomials. Then the coordinate of the result corresponding to $e_{i_1,\ldots,i_k}$ will be equal to $\initial_{\grad^A}(C_{i_1,\ldots,i_k})(c)$ where $C_{i_1,\ldots,i_k}$ are the polynomials considered in Theorem~\ref{kernelexp}.

This shows that the coordinate of $\exp(c)v_{\omega_k}^S$ corresponding to $e_{i_1,\ldots,i_k}^S$ will be equal to $\initial_{\grad^A}(C_{i_1,\ldots,i_k})(c)$. If we now compose the embedding $\bP\subset \bP(U_\la)$ from Section~\ref{generalities} with the isomorphism between $\bP(U_\la)$ and $\bP(U_\la^S)$, then we see that $\exp(c)\bm u_\la^S$ lies in $\bP\subset\bP(U_\la^S)$ and its Pl\"ucker coordinates are precisely the values $\initial_{\grad^S}(C_{i_1,\ldots,i_k})(c)$. Finally, we know from Theorem~\ref{kernelexp} that $\initial_{\grad^S}I$ is precisely the ideal of polynomials vanishing in all points with Pl\"ucker coordinates of this form with $c$ ranging over $\bC^{\{1\le i<j\le n\}}$. This concludes the proof.

The obtained embedding of $F^S$ into $\bP(L_\la^S)$ coincides with the one obtained in Proposition~\ref{FSinPLS}, since we have considered the same embedding of $\bP(L_\la^S)$ into $\bP(U_\la)$, the same embedding of $\bP$ into $\bP(U_\la)$ and the same embedding of $F^S$ into $\bP$.
\end{proof} 

\begin{remark}
In the case of abelian PBW degenerations as well as in~\cite{fafefom} and~\cite{favourable} the degenerate flag variety was defined as an orbit closure for a degenerate Lie group. This was then shown to coincide with a certain Gr\"obner degeneration. There is no degenerate group to be seen here, however, it turns out that considering the exponentials of generators of the degenerate algebra is sufficient. The fact that this embedding provided by representation theory coincides with the geometric one given by Proposition~\ref{FSinPLS} shows that we have constructed the ``correct'' embedding.

Now, in these earlier works one could define the degenerate flag variety in complete analogy with Theorem~\ref{main} without mentioning the Lie group formed by the exponentials. Since our degenerate representation theory shares many nice properties with the earlier theories of PBW degenerations (the existence of tensor products, Corollary~\ref{degcartcomp}, Lemma~\ref{exptensor} and, of course, Theorem~\ref{main}), one could argue that a degenerate Lie group is not inherent to a degenerate representation theory but is an additional nice feature of the earlier theories. This idea is strengthened by the below approach which gets rid of the exponentials altogether, requiring only tensor products with a ``Cartan component property'' analogous to Corollary~\ref{degcartcomp}.
\end{remark}

We now present an alternative way of characterizing $F^S$ in terms of the representation theory of $\Phi^n$. First of all, for integral dominant weights $\la$ and $\mu$ consider the tensor product $U_\la^S\otimes U_\mu^S=U_{\la+\mu}^S$. On one hand, by Corollary~\ref{degcartcomp} applied to $\la$ and $\mu$, this product contains $L_\la^S\otimes L_\mu^S$. On the other, it contains $L_{\la+\mu}^S$ as the submodule generated by $u_\la^S\otimes u_\mu^S=u_{\la+\mu}^S$. We obtain an embedding $L_{\la+\mu}^S\subset L_\la^S\otimes L_\mu^S$ and the dual surjection $(L_\la^S)^*\otimes (L_\mu^S)^*\twoheadrightarrow(L_{\la+\mu}^S)^*$. This gives a commutative algebra structure on $\mathcal Q^S=\bigoplus_\la  (L_\la^S)^*$. %This algebra is graded by $\fh^*$ by definition, we denote this grading $\wt$.

\begin{theorem}
$\mathcal Q^S$ is isomorphic to $\mathcal P^S$.
\end{theorem}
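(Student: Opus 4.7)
The plan is to identify both algebra structures as dual to a single embedding $L_{\la+\mu}^S\hookrightarrow L_\la^S\otimes L_\mu^S$. By Proposition~\ref{dualspaces} the two algebras already share the underlying weight-graded vector space $\bigoplus_\la(L_\la^S)^*$, so it is enough to match the multiplications component by component. On the $\mathcal Q^S$ side the relevant embedding, which I will call $\beta$, is built into the definition of the algebra: it is the $\Phi_n$-submodule of $L_\la^S\otimes L_\mu^S\subset U_\la^S\otimes U_\mu^S=U_{\la+\mu}^S$ generated by $u_\la^S\otimes u_\mu^S=u_{\la+\mu}^S$, which by Corollary~\ref{degcartcomp} is isomorphic to $L_{\la+\mu}^S$. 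The task then reduces to unfolding the $\mathcal P^S$-multiplication into a concrete embedding $\alpha:L_{\la+\mu}^S\hookrightarrow L_\la^S\otimes L_\mu^S$ and checking $\alpha=\beta$.

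To produce $\alpha$, I would invoke the classical fact that the Pl\"ucker multiplication $\mathcal P_\la\otimes\mathcal P_\mu\to\mathcal P_{\la+\mu}$ is dual (via $\mathcal P_\la\cong L_\la^*$) to the Cartan embedding $L_{\la+\mu}\hookrightarrow L_\la\otimes L_\mu$ sending $v_{\la+\mu}$ to $v_\la\otimes v_\mu$. The grading $\grad^S$ on $R$ is compatible with multiplication, so the induced filtration on $\mathcal P$ is multiplicative; dually, the Cartan embedding becomes a filtered map, with the filtration on $L_{\la+\mu}$ inherited from $U_{\la+\mu}$ and the filtration on $L_\la\otimes L_\mu$ inherited from $U_\la\otimes U_\mu=U_{\la+\mu}$ (i.e.\ the tensor-product filtration). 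Since associated graded commutes with tensor products of filtered vector spaces, one has $\gr(L_\la\otimes L_\mu)=L_\la^S\otimes L_\mu^S$, and passing to $\gr$ yields an injection $\alpha:L_{\la+\mu}^S\hookrightarrow L_\la^S\otimes L_\mu^S$ that dualizes to the $\mathcal P^S$-multiplication under Proposition~\ref{dualspaces}.

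To identify $\alpha$ with $\beta$, I would first check that $\alpha$ is $\Phi_n$-equivariant. The key intermediate step is that the tensor-product $\Phi_n$-action on $L_\la\otimes L_\mu$ defined in Section~\ref{degenact} restricts on the Cartan submodule $L_{\la+\mu}$ to the direct $\Phi_n$-action: indeed, a weight analysis shows that any simple-tensor summand $v'\otimes w'$ of a vector in $L_{\la+\mu}\cap U(i)$ satisfies $v'\in L_\la(i)$ and $w'\in L_\mu(i)$, so the tensor rule faithfully reproduces the Leibniz action of $f_{i,j}$. Consequently the image of $\alpha$ is a $\Phi_n$-submodule of $L_\la^S\otimes L_\mu^S$ containing $v_\la^S\otimes v_\mu^S=u_{\la+\mu}^S$, hence contains the image of $\beta$ (which is the minimal such submodule). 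Since both images have dimension $\dim L_{\la+\mu}$ (for $\alpha$ by injectivity of $\gr$ on a strict filtered inclusion, for $\beta$ by Corollary~\ref{degcartcomp}), they coincide, and the theorem follows. The main obstacle I anticipate is the routine but nontrivial bookkeeping in the second paragraph, particularly verifying that the filtration on $L_\la\otimes L_\mu$ induced from $U_{\la+\mu}$ really is the tensor-product filtration and that the associated graded of the Cartan inclusion is injective; both claims can be checked by lifting $\grad^S$-adapted bases of $L_\la^S$ back to $L_\la$ in the spirit of Theorem~\ref{Lfiltration}.
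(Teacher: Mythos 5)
Your argument is correct in substance but follows a genuinely different route from the paper. The paper never compares the two comultiplications directly: it observes that $\mathcal Q^S$, like $\mathcal P^S$, is a quotient of the Pl\"ucker ring $R$ (via Corollary~\ref{degcartcomp}), and identifies the two kernels degree by degree --- in degree $\la$ the kernel of $R_\la\twoheadrightarrow\mathcal Q^S_\la$ is the orthogonal of the cyclic submodule $\Phi_n u_\la^S\subset W_\la^S$, which under the linear isomorphism $U_\la\cong U_\la^S$ is exactly the subspace $L_\la^S\subset W_\la$ already shown in (the proof of) Proposition~\ref{dualspaces} to be the orthogonal of $\initial_{\grad^S}I_\la$. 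That route reuses work already done (the orthogonality computation and the lemma that $\iota$ is a $\Phi_n$-homomorphism) and avoids all of the filtered-tensor-product bookkeeping you flag. Your route is more intrinsic: you compare the multiplications through their dual embeddings $\alpha,\beta:L_{\la+\mu}^S\to L_\la^S\otimes L_\mu^S$, and the pivotal observation --- that the tensor $\Phi_n$-action of Section~\ref{degenact} restricts on the Cartan component $L_{\la+\mu}\subset L_\la\otimes L_\mu$ to the plain action, essentially because $x+y\in\mathfrak c(i)$ with $x,y$ both in the cone spanned by the negative simple roots forces $x,y\in\mathfrak c(i)$ --- is correct and is not made explicit in the paper. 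The price is the list of verifications you acknowledge (that the filtration induced on $L_\la\otimes L_\mu$ from $U_{\la+\mu}$ is the tensor filtration, that $\gr$ of the tensor action is the tensor product of the $\gr$ actions, and that passing to $\gr$ intertwines with the duality of Proposition~\ref{dualspaces}); all of these do go through with filtration-adapted bases, but they are genuinely part of the proof and would have to be written out.

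One step at the end should be tightened. Equality of the \emph{images} of $\alpha$ and $\beta$ only shows that the dual maps $(L_\la^S)^*\otimes(L_\mu^S)^*\to(L_{\la+\mu}^S)^*$ have the same kernel, which does not by itself assemble into an algebra isomorphism. But you have already established everything needed for the stronger statement $\alpha=\beta$: both maps are $\Phi_n$-equivariant and send $v_{\la+\mu}^S$ to $v_\la^S\otimes v_\mu^S$, and $L_{\la+\mu}^S$ is cyclic over $\Phi_n$ by Corollary~\ref{degenbasis}, so $\alpha(\varphi^Tv_{\la+\mu}^S)=\varphi^T(v_\la^S\otimes v_\mu^S)=\beta(\varphi^Tv_{\la+\mu}^S)$ for all $T$. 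Replacing the dimension count with this one line completes the identification of the two products.
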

\begin{proof}
Both of the algebras decompose as $\bigoplus_\la  (L_\la^S)^*$, we are to identify the multiplicative structures. In view of Corollary~\ref{degcartcomp} the algebra $\mathcal Q^S$ is generated by the components $(L_{\om_i}^S)^*$ and we have a surjection $R\twoheadrightarrow\mathcal Q^S$. We are to show that the kernel of this surjection is $\initial_{\grad^S} I$.

For $\la=(a_1,\ldots,a_{n-1})$ the subspace \[W_\la^S=\Sym^{a_1}(L_{\om_1}^S)\otimes\ldots\otimes\Sym^{a_{n-1}}(L_{\om_{n-1}}^S)\subset U_\la^S\] is seen to be a $\Phi_n$-submodule. When restricted to homogeneity degree $\la$, the kernel of the above surjection is the orthogonal of the submodule $L_\la^S\subset W_\la^S$ generated by $u_\la^S\in W_\la^S$ (the bases in $L_{\om_k}^S$ composed of the $e_{i_1,\ldots,i_k}^S$ provide a duality between $W_\la^S$ and $R_\la$). However, in the proof of Proposition~\ref{dualspaces} it was shown that $L_\la^S\subset W_\la$ is orthogonal to $\initial_{\grad^S} I_\la$ and the theorem follows via the linear isomorphism between $U_\la$ and $U_\la^S$.
%For any monomial $\varphi^T$ acting nontrivially on $u_\la^S$ (i.e. such that $M_T$ is $L_\la$-optimal) Proposition~\ref{phionLS} implies the following. To obtain $\varphi^T u_\la^S$ one must take $M_Tu_\la$, take its maximal $\grad^S$-homogeneous component and take the image of this component under the linear isomorphism between $U_\la$ and $U_\la^S$. The orthogonality with $\initial_{\grad^S} I_\la$ ensues.
\end{proof}

The above theorem can be rephrased as the following characterization of $F^S$ (where $\{(e_{i_1,\ldots,i_k}^S)^*\}\subset (L_\la^S)^*$ is the basis dual to $\{e_{i_1,\ldots,i_k}^S\}$).
\begin{theorem}\label{mainproj}
There exists a surjection $R\twoheadrightarrow\mathcal Q^S$ mapping $X_{i_1,\ldots,i_k}$ to $(e_{i_1,\ldots,i_k}^S)^*$. The zero set of the kernel of this surjection is $F^S\subset\bP$.
\end{theorem} 
We have thus established two ways of characterizing a Gelfand--Tsetlin degeneration in terms of the representation theory of $\Phi_n$. While Theorem~\ref{main} mimics the traditional way of defining PBW degenerations as orbit closures, Theorem~\ref{mainproj} seems more natural and also has the advantage of being independent of the highest weight.

\section{Singular highest weights and partial flag varieties}\label{singular}

The purely representation-theoretic results in the above sections such as Theorem~\ref{basis}, Theorem~\ref{Lfiltration}, Proposition~\ref{filtrationsame} or Corollary~\ref{degenbasis} hold equally well for regular and singular highest weight $\la$. However, results concerned with the geometry of $F^S$ and its defining ideal $I^S$ (Proposition~\ref{FSinPLS}, Theorems~\ref{kerneldet} and~\ref{kernelexp}, Theorem~\ref{toric}, Theorems~\ref{main} and~\ref{mainproj}) are limited to the consideration of the complete flag variety $F$ and its degenerations and, therefore, only deal with regular highest weights. In this section we will recall the necessary facts concerning partial flag varieties and then generalize said results to this setting.

Within this section fix a set ${\bf d}=\{d_1,\ldots,d_l\}\subset\{1,\ldots,n-1\}$ and an integral dominant weight $\la=\sum_j a_{d_j}\om_{d_j}$ with all $a_{d_j}>0$ (i.e.\ having nonzero coordinates precisely at positions $d_1,\ldots,d_l$). The subgroup in $G$ stabilizing $v_\la\in L_\la$ is the standard parabolic subgroup $P_{\bf d}$ (depending only on $\bf d$ and not on the chosen $\la$) and $F_\bd=G/P_\bd$ is the corresponding partial flag variety. Here we obtain $P_\bd=B$ and $F_\bd=F$ when $\bd=\{1,\ldots,n-1\}$, i.e.\ $\la$ is regular.

Consider the subring $R_\bd\subset R$ generated by all Pl\"ucker variables of the from $X_{i_1,\ldots,i_{d_j}}$, this subring is the homogeneous coordinate ring of \[\bP_\bd=\bP(L_{\om_{d_1}})\times\ldots\times\bP(L_{\om_{d_l}}).\] The Pl\"ucker embedding $F_\bd\subset\bP_\bd$ of the partial flag variety is given the ideal $I_\bd=I\cap R_\bd$, denote the homogeneous coordinate ring $\mathcal P_\bd=R_\bd/I_\bd$. The grading $\wt$ restricts to $R_\bd$ as a grading by the semigroup generated by all $\om_{d_j}$. If $\mu$ is a weight in this semigroup, then the corresponding homogeneous components can be identified: $R_{\bd,\mu}=R_\mu$, $I_{\bd,\mu}=I_\mu$ and $\mathcal P_{\bd,\mu}=\mathcal P_\mu$.

The grading $\grad^S$ can also be restricted to $R_\bd$ and we set $\mathcal P_\bd^S=R_\bd/\initial_{\grad^S}I_\bd$. This is the homogeneous coordinate ring of the Gr\"obner degeneration $F_\bd^S\subset\bP_\bd$ of $F_\bd$ given by the ideal $\initial_{\grad^S}I_\bd$. The decreasing filtration on $R$ given by $\grad^S$ induces a filtration on $R_{\bd}$ and, subsequently, on $\mathcal P_{\bd}$. For a weight $\mu$ in the semigroup generated by all $\om_{d_j}$ the obtained filtration on $\mathcal P_{\bd,\mu}$ coincides with that on $\mathcal P_\mu$ and we see (via Proposition~\ref{initdegen}) that the homogeneous components of $\mathcal P_\bd^S$ are associated graded spaces of duals of irreducible representations.

Now, the Segre embedding provides an embedding $\bP_\bd\subset\bP(U_\la)$ and, subsequently, $F_\bd^S\subset\bP(U_\la)$. We have the following generalization of Proposition~\ref{FSinPLS} which is proved in the same manner.
\begin{proposition}
The image of the embedding $F_\bd^S\subset\bP(U_\la)$ is contained in the image of $\bP(L_\la^S)\subset\bP(U_\la)$ (defined in Section~\ref{generalities}).
\end{proposition}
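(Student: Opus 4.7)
The proof should follow the template of Proposition~\ref{FSinPLS} very closely, with the only genuinely new point being the identification of a graded piece of the initial ideal $\initial_{\grad^S} I_\bd$ with the corresponding graded piece of $\initial_{\grad^S} I$. The plan is as follows.

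First, I would establish the identity $(\initial_{\grad^S} I_\bd)_\la = (\initial_{\grad^S} I)_\la$. Since $\la$ is assumed to have nonzero coordinates exactly at the positions $d_j \in \bd$, every $\wt$-homogeneous monomial of weight $\la$ in $R$ already lies in the subring $R_\bd$; hence $R_{\bd,\la} = R_\la$ and therefore $I_{\bd,\la} = I_\bd \cap R_\la = I \cap R_\la = I_\la$. Both $\initial_{\grad^S} I_\bd$ and $\initial_{\grad^S} I$ are $\wt$-homogeneous (the $\grad^S$-initial part of a $\wt$-homogeneous polynomial remains $\wt$-homogeneous of the same weight, because the $\grad^S$-initial form only selects a subset of monomials from a single $\wt$-graded piece). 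Consequently, taking $\wt$-weight $\la$ parts on both sides yields the span of $\grad^S$-initial forms of elements of $I_\la$ in either case, and the claimed equality follows.

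Second, I would set up the geometry exactly as in Proposition~\ref{FSinPLS}. Any point in $F_\bd^S \subset \bP_\bd$ maps under the composite of the diagonal embeddings $\bP(L_{\om_{d_j}}) \hookrightarrow \bP(L_{\om_{d_j}})^{a_{d_j}}$ with the Segre embedding into $\bP(U_\la)$, and this image lies in $\bP(W_\la)$ since the Segre image of a product of symmetric-power-like factors sits in the symmetrized subspace. So the question is whether this image lies in the further subspace $\bP(L_\la^S) \subset \bP(W_\la)$.

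Third, I would invoke Proposition~\ref{dualspaces} (and the duality established in its proof), by which $L_\la^S \subset W_\la$ is the orthogonal of $\initial_{\grad^S} I_\la \subset R_\la$ viewed as functionals on $W_\la$. Hence membership in $\bP(L_\la^S)$ reduces to vanishing of every element of $\initial_{\grad^S} I_\la$ at the image point. By the first step these functionals coincide with the $\wt$-weight $\la$ part of $\initial_{\grad^S} I_\bd$, and these are precisely (a subset of) the defining equations of $F_\bd^S \subset \bP_\bd$; they therefore vanish on the chosen point, just as in the original proof.

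The only potential obstacle is the first step, and it is really just bookkeeping about graded components; once this is in hand, the argument of Proposition~\ref{FSinPLS} transplants without modification.
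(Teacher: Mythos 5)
Your proposal is correct and follows exactly the route the paper intends: the paper proves this proposition simply by saying it ``is proved in the same manner'' as Proposition~\ref{FSinPLS}, and your first step (the identification $R_{\bd,\la}=R_\la$, $I_{\bd,\la}=I_\la$ and hence $(\initial_{\grad^S}I_\bd)_\la=(\initial_{\grad^S}I)_\la$ for $\la$ supported on $\bd$) is precisely the bookkeeping the paper records in the paragraphs preceding the statement. Nothing is missing.
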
 

From the fact that $\initial_{\grad^S}I_\bd=\initial_{\grad^S}I\cap R_\bd$ we immediately obtain generalizations of Theorems~\ref{kerneldet} and~\ref{kernelexp}.
\begin{theorem}\label{kerneldetsing}
$\initial_{\grad^S} I_\bd$ is the kernel of the map from $R_\bd$ to $Q$ sending $X_{i_1,\ldots,i_{d_j}}$ to $\initial_{\grad^A} D_{i_1,\ldots,i_{d_j}}$.
\end{theorem}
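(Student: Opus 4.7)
The plan is to reduce the statement to Theorem~\ref{kerneldet} by a restriction argument. The map appearing in the statement is literally the restriction to $R_\bd$ of the map $\delta^S\colon R\to Q$ that was shown in Theorem~\ref{kerneldet} to have kernel $\initial_{\grad^S}I$. Hence its kernel on $R_\bd$ equals $(\initial_{\grad^S}I)\cap R_\bd$, and the whole theorem follows once we establish the identity
\[
\initial_{\grad^S}I_\bd \;=\; \initial_{\grad^S}I \cap R_\bd.
\]

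To prove this identity I would exploit the bigrading on $R$: the ring $R$ is graded both by the weight $\wt$ and by $\grad^S$, and every monomial is homogeneous with respect to both. The Pl\"ucker ideal $I$ is $\wt$-homogeneous, so the $\grad^S$-initial form of any $\wt$-homogeneous element of $I$ is again $\wt$-homogeneous of the same weight, and the same holds for $I_\bd$. Consequently, both sides of the desired equality decompose as direct sums of their $\wt$-homogeneous components, and it suffices to check the equality weight by weight. The subring $R_\bd$ is the sum of those $R_\mu$ with $\mu$ in the semigroup generated by $\om_{d_1},\dots,\om_{d_l}$, and the paragraph preceding the theorem records that $I_\mu = (I_\bd)_\mu$ for any such $\mu$, while for $\mu$ outside that semigroup the component $R_\mu\cap R_\bd$ is zero. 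Taking $\grad^S$-initial forms of $I_\mu=(I_\bd)_\mu$ therefore yields the same subspace whether we view it as a component of $\initial_{\grad^S}I$ or of $\initial_{\grad^S}I_\bd$, establishing the weight-by-weight equality.

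I do not anticipate a genuine obstacle here; all the substantive work has been done in Theorem~\ref{kerneldet}, and the passage to $F_\bd$ is formal once the compatibility of initial ideals with the weight grading is noted. The analogous proof of the singular counterpart of Theorem~\ref{kernelexp} should proceed along identical lines with $\delta^S$ replaced by $\varepsilon^S$.
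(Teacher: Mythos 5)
Your proposal is correct and follows the same route as the paper: the paper derives this theorem in one line from the identity $\initial_{\grad^S}I_\bd=\initial_{\grad^S}I\cap R_\bd$ together with Theorem~\ref{kerneldet}, exactly as you do. The only difference is that the paper asserts that identity without comment, whereas you supply the (correct) justification via the $\wt$-grading and the component-wise identification $(I_\bd)_\mu=I_\mu$.
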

\begin{theorem}\label{kernelexpsing}
$\initial_{\grad^S} I_\bd$ is the kernel of the map from $R_\bd$ to $Q$ sending $X_{i_1,\ldots,i_{d_j}}$ to $\initial_{\grad^A}(z_{d_j,d_j}C_{i_1,\ldots,i_{d_j}})$. 
\end{theorem}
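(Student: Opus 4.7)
The plan is to deduce this directly from its complete-flag analogue, Theorem~\ref{kernelexp}, together with the identity
\[
\initial_{\grad^S} I_\bd \;=\; \initial_{\grad^S} I \cap R_\bd
\]
that is flagged by the authors just before Theorem~\ref{kerneldetsing}. Once this identity is in hand, the theorem is essentially formal: the map from $R_\bd$ to $Q$ in the statement is literally the restriction to $R_\bd$ of the map $\varepsilon^S\colon R\to Q$ of Theorem~\ref{kernelexp}, since the Pl\"ucker generators $X_{i_1,\ldots,i_{d_j}}$ of $R_\bd$ are sent to the same targets. Its kernel is therefore $(\ker \varepsilon^S)\cap R_\bd = \initial_{\grad^S} I \cap R_\bd = \initial_{\grad^S} I_\bd$, which is exactly what is to be proved.

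So the real content is verifying the displayed identity. The inclusion $\initial_{\grad^S} I_\bd \subseteq \initial_{\grad^S} I \cap R_\bd$ is automatic from $I_\bd\subseteq I$ together with the fact that $R_\bd$ is a $\grad^S$-graded subring of $R$, so that $\grad^S$-initial parts of elements of $R_\bd$ remain in $R_\bd$. For the reverse inclusion I would exploit the $\wt\times\grad^S$-bigrading on $R$: both $I$ (by construction) and $R_\bd$ (being the $\wt$-graded subspace supported on weights in the semigroup generated by the $\om_{d_j}$) are bihomogeneous subspaces. Take a $\grad^S$-homogeneous $g\in\initial_{\grad^S}I\cap R_\bd$ and write $g=\initial_{\grad^S}(f)$ for some $f\in I$. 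Decomposing $f=\sum_\mu f_\mu$ into its $\wt$-components (each $f_\mu\in I$ by $\wt$-homogeneity) and setting $h=\sum_\mu f_\mu$ with $\mu$ ranging over the semigroup generated by the $\om_{d_j}$ yields an element of $I\cap R_\bd=I_\bd$. The condition $g\in R_\bd$ forces all $\wt$-components of $g$ outside that semigroup to vanish, from which one checks that $\initial_{\grad^S}(h)=g$.

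The only mild obstacle is this last inclusion, because initial-ideal formation does not in general commute with intersection; what makes it work here is the compatibility of $\grad^S$ with the $\wt$-grading (both are monomial gradings, so the $\wt$- and $\grad^S$-decompositions commute). With the identity secured, the theorem drops out as a restriction of Theorem~\ref{kernelexp}; an entirely parallel argument, substituting minors $D_{i_1,\ldots,i_{d_j}}$ for exponential polynomials $z_{d_j,d_j}C_{i_1,\ldots,i_{d_j}}$, also gives Theorem~\ref{kerneldetsing} from Theorem~\ref{kerneldet}, which is presumably why the authors state the two in parallel.
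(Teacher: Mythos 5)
Your proposal is correct and follows essentially the same route as the paper, which likewise deduces the theorem immediately from the identity $\initial_{\grad^S} I_\bd = \initial_{\grad^S} I \cap R_\bd$ by restricting the map $\varepsilon^S$ of Theorem~\ref{kernelexp} to $R_\bd$. The only difference is that the paper asserts this identity without proof, whereas you supply a (correct) verification via the $\wt$-bihomogeneity of $I$ and the weight-decomposition argument.
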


Like in the proof of Theorem~\ref{toric}, we see (via Theorem~\ref{kerneldetsing}) that if all inequalities in (a) and (b) are strict, then $\mathcal P_\bd^S$ is the semigroup ring of the semigroup in $\fh^*\oplus\Theta$ generated by the union of all $(\om_{d_j},\Pi_{\om_{d_j}})$. This is precisely the homogeneous coordinate ring of the toric variety associated with the polytope $P_\la$ and we obtain the following generalization.
\begin{theorem}
If all inequalities in (a) and (b) (equivalently, all inequalities in (A) and (B)) are strict, then the Gelfand--Tsetlin degeneration $F_\bd^S$ is the toric variety associated with the polytope $P_\la$. This is isomorphic to the toric variety associated with the Gelfand--Tsetlin polytope $GT_\la$.
\end{theorem}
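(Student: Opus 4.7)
The plan is to mirror the proof of Theorem~\ref{toric} but apply the partial-flag analogue Theorem~\ref{kerneldetsing} in place of Theorem~\ref{kerneldet}, reducing everything to an identification of $\mathcal P_\bd^S$ with a semigroup ring.

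First, under the hypothesis that all inequalities in (a), (b) are strict, I would invoke Proposition~\ref{minmonstrict}: for each tuple $1\le i_1<\ldots<i_{d_j}\le n$, the unique ordered monomial $M_T$ with $M_Tv_{\om_{d_j}}=\pm e_{i_1,\ldots,i_{d_j}}$ achieving $\deg^A M_T=s_{i_1,\ldots,i_{d_j}}$ is $M_{T(i_1,\ldots,i_{d_j})}$. Translating this from the representation-theoretic side to the determinantal side (using the observation in the proof of Theorem~\ref{kerneldet} that monomials $p$ appearing in $D_{i_1,\ldots,i_{d_j}}$ correspond bijectively to ordered monomials $M_{T(p)}$ satisfying $M_{T(p)}v_{\om_{d_j}}=\pm e_{i_1,\ldots,i_{d_j}}$), we get
\[
\initial_{\grad^A} D_{i_1,\ldots,i_{d_j}}=\pm\prod_{l=1}^{d_j} z_{l,i_l}.
\]
By Theorem~\ref{kerneldetsing}, $\mathcal P_\bd^S$ is then isomorphic to the subring of $Q$ generated by these monomials, one for each $1\le d_j\in\bd$ and each increasing tuple.

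Next I would identify this subring as a semigroup ring. The monomial $\prod_{l=1}^{d_j} z_{l,i_l}$ naturally corresponds to the lattice point $(\om_{d_j},T(i_1,\ldots,i_{d_j}))\in\fh^*\oplus\Theta$ (the first coordinate recording weight, the second recording exponents of the $z$'s for $l<m$, and the $z_{l,l}$-degrees being determined by the weight), and this correspondence is multiplicative. Thus the subring in question is exactly the semigroup algebra of the semigroup $\Sigma_\bd\subset\fh^*\oplus\Theta$ generated by $\bigcup_j\{\om_{d_j}\}\times\Pi_{\om_{d_j}}$.

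Finally, I would identify $\Sigma_\bd$ with the semigroup $\bigcup_{N\ge 0}\{(N\la,T):T\in\Pi_{N\la}\}$ sitting inside $\fh^*\oplus\Theta$. One inclusion is immediate from the Minkowski-sum definition~(\ref{minksum}) of $\Pi_{N\la}$. The reverse inclusion is exactly Proposition~\ref{gtminkowski} combined with Lemma~\ref{gtequiv}, which guarantees that every lattice point of $P_{N\la}$ (equivalently, every element of $\Pi_{N\la}$) decomposes as a sum of $Na_{d_j}$-many elements of $\Pi_{\om_{d_j}}$ across $j$. This identifies $\mathcal P_\bd^S$ with the homogeneous coordinate ring of the projective toric variety associated with $P_\la$, proving the first claim; the second claim follows immediately from the unimodular equivalence $P_\la\cong GT_\la$ of Lemma~\ref{gtequiv}. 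The main subtlety to check carefully is that the Minkowski decomposition provided by Proposition~\ref{gtminkowski} genuinely uses only the fundamental weights $\om_{d_j}$ appearing in $\la$ (not all fundamental weights), but this is built into the statement since $\Pi_{N\la}$ is defined as the Minkowski sum of $Na_{d_j}$ copies of $\Pi_{\om_{d_j}}$ for $d_j\in\bd$.
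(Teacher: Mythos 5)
Your proposal is correct and follows essentially the same route as the paper: the paper's argument likewise combines Theorem~\ref{kerneldetsing} with the computation $\initial_{\grad^A}D_{i_1,\ldots,i_{d_j}}=\prod_l z_{l,i_l}$ from the proof of Theorem~\ref{toric} (via Proposition~\ref{minmonstrict}) to identify $\mathcal P_\bd^S$ with the semigroup ring generated by the $(\om_{d_j},\Pi_{\om_{d_j}})$, and then recognizes this as the homogeneous coordinate ring of the toric variety of $P_\la$, with Lemma~\ref{gtequiv} giving the equivalence with $GT_\la$. Your write-up merely makes explicit the semigroup identification and the role of the Minkowski sum property, which the paper leaves implicit.
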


Finally, we have the generalizations of Theorems~\ref{main} and~\ref{mainproj}. The former is deduced from Theorem~\ref{kernelexpsing} just like Theorem~\ref{main} is deduced from Theorem~\ref{kernelexp} while the latter is a direct consequence of Theorem~\ref{mainproj}.
\begin{theorem}
Let $E_\la$ be the image of $\bC^{\{1\le i<j\le n\}}$ in $\bP(L_\la^S)$ under the map taking $c$ to $\exp(c)\bm v_\la^S$. The Zariski closure of $E_\la$ is the degenerate flag variety $F_\bd^S$. 
\end{theorem}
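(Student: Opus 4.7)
The plan is to mirror the proof of Theorem~\ref{main}, substituting Theorem~\ref{kernelexpsing} for Theorem~\ref{kernelexp} and working with the smaller product $\bP_\bd$ in place of $\bP$. First I would observe that the Cartan-component property Corollary~\ref{degcartcomp} holds equally well for singular $\la$: the proof in Section~\ref{degenact} relies only on the weight decomposition of $U_\la^S$, Proposition~\ref{phionLS} and Corollary~\ref{degenbasis}, none of which need regularity. Hence the $\Phi_n$-submodule of $U_\la^S$ generated by $u_\la^S$ is still (isomorphic to) $L_\la^S$. Consequently, it suffices to show that $F_\bd^S$ coincides with the Zariski closure of the set $\{\exp(c)\bm u_\la^S : c\in\bC^{\{1\le i<j\le n\}}\}$ inside $\bP(U_\la^S)$, where $\bm u_\la^S$ is the line spanned by $u_\la^S$.

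Next I would apply Lemma~\ref{exptensor} (which likewise does not use regularity) to expand
\[
\exp(c)u_\la^S=\bigotimes_{j=1}^{l}\bigl(\exp(c)v_{\om_{d_j}}^S\bigr)^{\otimes a_{d_j}}.
\]
As in the proof of Theorem~\ref{main}, each tensor factor $\exp(c)v_{\om_{d_j}}^S$ is computed by expanding $\exp(c_{i,j}\varphi_{i,j})=\sum_m \varphi_{i,j}^m/m!$, multiplying out, and keeping only the monomials $\varphi^T$ for which $M_T$ is $L_{\om_{d_j}}$-optimal (all others act by zero on $v_{\om_{d_j}}^S$ by Proposition~\ref{phionLS}). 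Comparing with the analogous computation performed in the classical setting for $\prod_{i,j}\exp(c_{i,j}f_{i,j})v_{\om_{d_j}}\in L_{\om_{d_j}}$, whose $e_{i_1,\ldots,i_{d_j}}$-coordinate is $C_{i_1,\ldots,i_{d_j}}(c)$, yields that the $e_{i_1,\ldots,i_{d_j}}^S$-coordinate of $\exp(c)v_{\om_{d_j}}^S$ equals $\initial_{\grad^A}(C_{i_1,\ldots,i_{d_j}})(c)$.

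Composing the isomorphism $\bP(U_\la)\cong\bP(U_\la^S)$ with the Segre embedding $\bP_\bd\hookrightarrow\bP(U_\la)$ (which, since $\la=\sum_j a_{d_j}\om_{d_j}$, involves only the factors indexed by $d_j$ each repeated $a_{d_j}$ times) shows that $\exp(c)\bm u_\la^S$ lies in the image of $\bP_\bd$, and its Plücker coordinate at $X_{i_1,\ldots,i_{d_j}}$ is the value $\initial_{\grad^A}\bigl(z_{d_j,d_j}C_{i_1,\ldots,i_{d_j}}\bigr)(c)$ (the extra factor $z_{d_j,d_j}$ arising exactly because of the $a_{d_j}$-fold diagonal). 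An application of Theorem~\ref{kernelexpsing} then identifies $\initial_{\grad^S}I_\bd$ with the ideal of polynomials in $R_\bd$ vanishing on all such points, so the closure is $F_\bd^S$.

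I do not anticipate a real obstacle; the translation is essentially mechanical. The only point requiring care is bookkeeping: one must check that the factor $z_{d_j,d_j}$ featuring in Theorem~\ref{kernelexpsing} is reproduced correctly by the Segre embedding (which it is, precisely because only the weights $\om_{d_j}$ enter $\la$ with multiplicity $a_{d_j}$), and that the reduction via Corollary~\ref{degcartcomp} survives for singular $\la$. Once these are noted, the rest of the proof of Theorem~\ref{main} transfers verbatim.
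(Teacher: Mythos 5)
Your proposal is correct and follows exactly the route the paper intends: the paper itself only remarks that this theorem ``is deduced from Theorem~\ref{kernelexpsing} just like Theorem~\ref{main} is deduced from Theorem~\ref{kernelexp}'', and your write-up is a faithful expansion of that deduction, including the correct observations that Corollary~\ref{degcartcomp} and Lemma~\ref{exptensor} do not require regularity of $\la$.
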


Let $\mathcal P_\bd^S$ be the subalgebra in $\mathcal P^S$ spanned by $(L_\mu^S)^*$ such that $\mu$ being a sum of the $\om_{d_j}$.
\begin{theorem}
There exists a surjection $R_\bd\twoheadrightarrow\mathcal P_\bd^S$ mapping $X_{i_1,\ldots,i_{d_j}}$ to $(e_{i_1,\ldots,i_{d_j}}^S)^*$ the kernel of which cuts out $F_\bd^S\subset\bP_\bd$.
\end{theorem}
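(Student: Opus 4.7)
The plan is to derive this as a direct consequence of Theorem~\ref{mainproj} by restriction to the subring $R_\bd \subset R$. The surjection given by Theorem~\ref{mainproj} sends $X_{i_1,\ldots,i_k}$ to $(e_{i_1,\ldots,i_k}^S)^*\in(L_{\om_k}^S)^*$ and has kernel $\initial_{\grad^S} I$ (via the identification $\mathcal Q^S\cong\mathcal P^S$ established in the preceding theorem). Restricting this surjection to $R_\bd$ produces a homomorphism $\pi_\bd: R_\bd\to\mathcal P^S$; its image lies in $\mathcal P_\bd^S$ since the generators $X_{i_1,\ldots,i_{d_j}}$ of $R_\bd$ are sent into the components $(L_{\om_{d_j}}^S)^*$. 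So the three things to verify are that $\pi_\bd$ surjects onto $\mathcal P_\bd^S$, that $\ker\pi_\bd=\initial_{\grad^S} I_\bd$, and that the resulting quotient cuts out $F_\bd^S$ inside $\bP_\bd$.

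For surjectivity I would argue that $\mathcal P_\bd^S$ is generated as an algebra by the fundamental components $(L_{\om_{d_j}}^S)^*$ with $d_j\in\bd$. Indeed, the multiplication in $\mathcal Q^S$ comes from the dualized embeddings $L_{\la+\mu}^S\hookrightarrow L_\la^S\otimes L_\mu^S$ of Corollary~\ref{degcartcomp}, and iterating this gives a surjection $(L_{\om_{d_1}}^S)^{*\otimes a_{d_1}}\otimes\cdots\otimes (L_{\om_{d_l}}^S)^{*\otimes a_{d_l}}\twoheadrightarrow (L_\mu^S)^*$ for every $\mu=\sum_j a_{d_j}\om_{d_j}$. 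Hence every homogeneous component of $\mathcal P_\bd^S$ lies in the image of $\pi_\bd$.

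For the identification of the kernel, I would note first that by construction $\ker\pi_\bd=\ker\pi\cap R_\bd=\initial_{\grad^S} I\cap R_\bd$, and the inclusion $\initial_{\grad^S} I_\bd\subseteq\initial_{\grad^S} I\cap R_\bd$ is immediate from $I_\bd=I\cap R_\bd$. For the reverse inclusion, take a $\grad^S$-homogeneous $p\in R_\bd\cap\initial_{\grad^S} I$ and write $p=\initial_{\grad^S} q$ for some $q\in I$. Every element of $R_\bd$ is $\wt$-homogeneous of weight in the semigroup generated by the $\om_{d_j}$, and since $I$ is $\wt$-homogeneous one can replace $q$ by its $\wt$-component of weight $\wt(p)$; this lies in $R_{\wt(p)}\cap I\subseteq R_\bd\cap I=I_\bd$, so $p\in\initial_{\grad^S} I_\bd$. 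This gives the equality of ideals and, since $\initial_{\grad^S} I_\bd$ is the defining ideal of $F_\bd^S\subset\bP_\bd$, the theorem follows.

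The main obstacle is really just the surjectivity onto $\mathcal P_\bd^S$, i.e.\ confirming that Corollary~\ref{degcartcomp} suffices to generate the subalgebra from the fundamental fundamental-weight components; the kernel computation is then routine manipulation of the two compatible gradings on $R$.
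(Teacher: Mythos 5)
Your proposal is correct and matches the paper's approach: the paper derives this statement as ``a direct consequence of Theorem~\ref{mainproj}'' without further detail, and your argument supplies exactly the routine details that claim presupposes (generation of $\mathcal P_\bd^S$ by the fundamental components via Corollary~\ref{degcartcomp}, and the identity $\initial_{\grad^S}I_\bd=\initial_{\grad^S}I\cap R_\bd$, which the paper itself states earlier in the section and which your $\wt$-homogeneity argument correctly justifies).
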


\section{Gr\"obner fans and tropical flag varieties}\label{tropical}

It is evident that all $A$ with properties (a) and (b) (i.e.\ (A) and (B)) form the set $K$ of integer points inside a convex rational polyhedral cone $\mathcal K\subset\Theta^*$. Now, $\sigma$ can be viewed as a linear map from $\Theta^*$ to the space of Gr\"obner degenerations $\Xi=\bR^{\{1\le i_1<\ldots<i_k\le n|1\le k\le n-1\}}$.

\begin{proposition}\label{sigmaK}
The linear map $\sigma$ is injective and each of $\mathcal K$ and $\sigma(\mathcal K)$ is a product of $\bR^{n-1}$ and a simpicial cone of dimension ${n-1}\choose2$. Furthermore, the map $\sigma$ is unimodular in the sense that is establishes a bijection between the set integer points in $\Theta^*$ and the set of integer points in its image.
\end{proposition}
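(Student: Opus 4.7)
The plan is to treat all three assertions through a single convenient change of coordinates on $\Theta^*$.

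First, to establish injectivity and unimodularity of $\sigma$, I would note that for any $1 \le i < j \le n$ the special tuple $(1,2,\ldots,i-1,j)$ produces a vector $T(1,\ldots,i-1,j)\in\Theta$ with a single nonzero entry, namely a $1$ at position $(i,j)$: the entries with $l<i$ satisfy $i_l=l$ and therefore contribute nothing (they would require $l<i_l$), while $i_i=j>i$ gives the sole $1$. Consequently
\[
\sigma(A)_{1,2,\ldots,i-1,j} = A(T(1,\ldots,i-1,j)) = a_{i,j}
\]
for every $1 \le i < j \le n$. This single identity shows simultaneously that each entry of $A$ is recoverable as one coordinate of $\sigma(A)$ (so $\sigma$ is injective) and that integrality passes in both directions (unimodularity), since $\sigma$ is a $\bZ$-linear map and $A$ is read off from a subset of $\sigma(A)$'s coordinates.

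For the structure of $\mathcal K$, I would introduce new coordinates on $\Theta^*$ by keeping the $n-1$ ``simple'' values $a_{l,l+1}$ and replacing every other $a_{i,j}$ (with $j - i \ge 2$) by
\[
c_{i,j} = a_{i,j} + a_{i+1,j-1} - a_{i,j-1} - a_{i+1,j},
\]
using the convention $a_{l,l} = 0$. A direct inspection shows that $c_{i,i+2} \ge 0$ is inequality (a), while $c_{i,j} \ge 0$ for $j \ge i+3$ is inequality (b) after the re-indexing $j \mapsto j-1$; in particular the number of $c_{i,j}$ equals $\binom{n-1}{2}$. So $\mathcal K$ is cut out by $c_{i,j} \ge 0$ for all $j - i \ge 2$ and imposes no constraints on the simple coordinates.

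The key remaining step, and essentially the only one that has a small amount of content, is to check that these $n - 1 + \binom{n-1}{2} = \binom{n}{2}$ linear functionals form a $\bZ$-basis of the dual lattice of $\Theta^*$. I would do this via the inductive inverse
\[
a_{i,j} = c_{i,j} + a_{i,j-1} + a_{i+1,j} - a_{i+1,j-1},
\]
on the right-hand side of which every $a$ has strictly smaller index difference than $j-i$; inducting on $j - i$ starting from the simple coordinates expresses each $a_{i,j}$ as an explicit $\bZ$-linear combination of the $c$'s and the $a_{l,l+1}$'s. Hence the change of coordinates is unimodular, $\mathcal K = \bR^{n-1} \times \bR_{\ge 0}^{\binom{n-1}{2}}$ in the new coordinates, and the corresponding statement for $\sigma(\mathcal K)$ follows by transporting everything through the injective unimodular map $\sigma$. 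I do not foresee any genuine obstacle beyond the combinatorial bookkeeping of the re-indexings described above.
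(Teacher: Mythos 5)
Your proposal is correct and follows essentially the same route as the paper: the paper also uses the tuples $(1,\ldots,i-1,j)$ to read off $a_{i,j}=\sigma(A)_{1,\ldots,i-1,j}$, which gives injectivity and unimodularity at once, and then observes that the $\binom{n-1}{2}$ functionals bounded in (a) and (b) are linearly independent to get the product structure of $\mathcal K$ and hence of $\sigma(\mathcal K)$. Your explicit triangular change of coordinates $c_{i,j}=a_{i,j}+a_{i+1,j-1}-a_{i,j-1}-a_{i+1,j}$ with induction on $j-i$ is just a concrete verification of the linear independence that the paper leaves as ``one easily sees.''
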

\begin{proof}
The map $\sigma$ can be represented by a $(2^n-2)\times{n\choose2}$-matrix. Choose a pair $1\le i<j\le n$ and consider the row of this matrix corresponding to the tuple $(1,\ldots,i-1,j)$ (i.e.\ the integers between 1 and $i$ with $i$ replaced by $j$). One sees that $T(1,\ldots,i-1,j)_{i,j}=1$ while all other coordinates of $T(1,\ldots,i-1,j)$ are zero. This means that this row in our matrix has exactly one nonzero entry in the column corresponding to the pair $(i,j)$. This shows that the matrix has maximal rank and, therefore, $S$ is injective.

One easily sees that altogether there are ${n-1}\choose2$ inequalities in (a) and (b) and that the functionals on $\Theta^*$ bounded by these inequalities are linearly independent. This immediately implies that $\mathcal K$ has the claimed form. The claim concerning $\sigma(\mathcal K)$ follows from the injectivity of $S$.

The final claim can be obtained as follows. Obviously, the image of any integer point under $\sigma$ is an integer point. Conversely, consider an integer point $S=(s_{i_1,\ldots,i_k})\in \sigma(\Theta^*)$. We claim that $S=\sigma(A)$ where the coordinate $a_{i,j}$ of $A$ is equal to $s_{1,\ldots,i-1,j}$. Indeed, we know that the coordinate $\sigma(A)_{1,\ldots,i-1,j}=a_{i,j}$ and that $S$ is the unique point in the image $\sigma$ with the given coordinates $s_{1,\ldots,i-1,j}$, since all other coordinates are expressed as linear combinations of these.
\end{proof}

Let us briefly introduce the Gr\"obner fan and the tropicalization of the flag variety $F$, the details can be found in~\cite[Chapter 3]{MLS}. Every point in $S=(s_{i_1,\ldots,i_k})\in\Xi$ defines a Gr\"obner degeneration of $F$ but, as mentioned in Remark~\ref{projlim}, increasing all $s_{i_1,\ldots,i_k}$ for a chosen $k$ by the same constant does not change the degeneration. Therefore we can restrict our attention to the subspace $\Xi_0$ in which $s_{1,\ldots,k}=0$ for all $k$. Note that $\sigma(\mathcal K)\subset\Xi_0$. 

Let us define an equivalence relation on $\Xi_0$ by setting $S\sim S'$ if and only if $\initial_{\grad^{S'}}I=\initial_{\grad^S}I$. Each equivalence class is the relative interior of a closed convex rational polyhedral cone in $\Xi_0$. Together all these cones form a complete fan in $\Xi_0$ known as the Gr\"obner fan of the variety $F$. Let us consider all the cones in this fan such that for a point $S$ in the relative interior of the cone the initial ideal $\initial_{\grad^{S}}I$ does not contain any monomials in the Pl\"ucker variables. These cones form a subfan in the Gr\"obner fan that is the tropicalization of $F$ with respect to the identically zero valuation on $\bC$. This subfan is also known as the {\it tropical flag variety}.

\begin{theorem}\label{maxcone}
$\sigma(\mathcal K)$ is a cone in the Gr\"obner fan of $F$. Moreover, $\sigma(\mathcal K)$ is a maximal cone in the tropicalization of $F$.
\end{theorem}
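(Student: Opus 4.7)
I would split the proof into two steps: first show that $\sigma(\mathcal K)$ equals the closure of a single equivalence class of the Gr\"obner fan (so it is a Gr\"obner cone), and then deduce the maximality statement from closedness of the tropical flag variety together with a dimension count. The first step begins with Theorems~\ref{toric} and~\ref{kerneldet}, which together imply that for every $S$ in the relative interior of $\sigma(\mathcal K)$ the initial ideal $\initial_{\grad^S}I$ equals the toric ideal $J$ cut out by the monomial map $X_{i_1,\ldots,i_k}\mapsto\prod_{l=1}^k z_{l,i_l}$. Hence $\text{relint}(\sigma(\mathcal K))$ sits inside a single equivalence class of the Gr\"obner fan, and its closure is a Gr\"obner cone $\mathcal C\supset\sigma(\mathcal K)$; the task is to upgrade this to $\mathcal C=\sigma(\mathcal K)$ by showing that any $S\notin\text{relint}(\sigma(\mathcal K))$ gives $\initial_{\grad^S}I\ne J$.

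The key input is a three-term Pl\"ucker relation attached to each defining inequality of $\mathcal K$. For $1\le i<j\le n-1$ I would use
\[
p_{i,j}=X_{1,\ldots,i}\,X_{1,\ldots,i-1,j,j+1}-X_{1,\ldots,i-1,j}\,X_{1,\ldots,i,j+1}+X_{1,\ldots,i-1,j+1}\,X_{1,\ldots,i,j},
\]
which lies in $I$ by a direct minor computation in the upper-triangular matrix $\zeta$ from Section~\ref{gtdegens} combined with Proposition~\ref{kerneldetclassic}. The same computation shows that the three $\grad^S$-weights of its monomials are $a_{i,j}+a_{i+1,j+1}$, $a_{i,j}+a_{i+1,j+1}$ and $a_{i,j+1}+a_{i+1,j}$ (with the convention $a_{i+1,i+1}=0$ in the case $j=i+1$). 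For $j=i+1$ the relevant inequality is (a), namely $a_{i,i+1}+a_{i+1,i+2}\le a_{i,i+2}$; for $j>i+1$ it is (b), namely $a_{i,j}+a_{i+1,j+1}\le a_{i,j+1}+a_{i+1,j}$. Under strict inequality $\initial_{\grad^S}p_{i,j}$ is a binomial that one checks lies in $J$; under equality $\initial_{\grad^S}p_{i,j}=p_{i,j}$, and applying $X_I\mapsto\prod z_{l,i_l}$ to $p_{i,j}$ leaves a single nonzero monomial, so $p_{i,j}\notin J$; under violation $\initial_{\grad^S}p_{i,j}$ is the single monomial $\pm X_{1,\ldots,i-1,j+1}X_{1,\ldots,i,j}$, which cannot lie in the monomial-free $J$. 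In the last two regimes $\initial_{\grad^S}I\ne J$, which forces $\mathcal C=\sigma(\mathcal K)$.

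For the maximality claim, $\text{relint}(\sigma(\mathcal K))$ lies in $\operatorname{trop}(F)$ (since $J$ is monomial-free), so by closedness of $\operatorname{trop}(F)$ the entire $\sigma(\mathcal K)\subset\operatorname{trop}(F)$; together with the previous step this makes $\sigma(\mathcal K)$ a cone of $\operatorname{trop}(F)$. By the fundamental theorem of tropical geometry $\operatorname{trop}(F)$ is pure of dimension $\dim F=\binom{n}{2}$, and by Proposition~\ref{sigmaK} $\dim\sigma(\mathcal K)=\binom{n}{2}$ as well, so $\sigma(\mathcal K)$ is top-dimensional and therefore maximal. The principal technical obstacle is the content of the middle paragraph: verifying $p_{i,j}\in I$ uniformly in $(i,j)$, computing the three $\grad^S$-weights, and tracking in each of the three regimes whether the surviving initial form lies in $J$; the rest of the proof is a packaging argument via closedness and purity.
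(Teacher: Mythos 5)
Your argument is correct and shares its skeleton with the paper's proof: points of $\mathrm{relint}\,\sigma(\mathcal K)$ give the toric ideal $J$, the relative boundary must be shown to give something else, and maximality then follows from the bound $\dim\le\binom n2$ on cones of the tropicalization together with $\dim\sigma(\mathcal K)=\binom n2$ from Proposition~\ref{sigmaK}. Where you genuinely diverge is the boundary step, and there your argument is precisely dual to the paper's. The paper exhibits, for each inequality of (a)/(b) turned into an equality, a binomial generator of $J$ of the form~(\ref{Jrelation}) and uses Theorem~\ref{kerneldet} to check that it is \emph{not} in $\initial_{\grad^{\sigma(A)}}I$: on the wall the initial form of the determinant $D_{1,\ldots,i-1,j,j+1}$ acquires a second monomial, so $\delta^S$ no longer annihilates the binomial. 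You instead exhibit an element that \emph{is} in $\initial_{\grad^{\sigma(A)}}I$ but not in $J$: the three-term Pl\"ucker relation $p_{i,j}$, whose monomials have $\grad^{\sigma(A)}$-weights $a_{i,j}+a_{i+1,j+1}$, $a_{i,j}+a_{i+1,j+1}$, $a_{i,j+1}+a_{i+1,j}$ (I verified these, including the $j=i+1$ convention), so that on the wall the entire trinomial is the initial form, while the monomial map $X_{i_1,\ldots,i_k}\mapsto\prod_l z_{l,i_l}$ cancels the first two monomials and not the third. Both computations are valid; yours has the mild advantage of needing only the definition of $J$ as the kernel of the monomial map rather than its binomial generating set from~\cite{MS} plus Theorem~\ref{kerneldet}, while the paper's keeps everything inside the machinery of Section~\ref{gtdegens}. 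One point worth making explicit in your write-up: you announce that every $S\notin\mathrm{relint}\,\sigma(\mathcal K)$ must be ruled out, but your computation (like the paper's) only treats $S$ in the image of $\sigma$; the reduction to the relative boundary of $\sigma(\mathcal K)$ is legitimate, but it uses the convexity of the Gr\"obner cone $\overline{C_J}$ together with the dimension bound $\dim\overline{C_J}\le\binom n2$, which forces the linear spans of $\overline{C_J}$ and $\sigma(\mathcal K)$ to coincide so that any point of $\overline{C_J}\setminus\sigma(\mathcal K)$ produces a point of $C_J$ on the relative boundary of $\sigma(\mathcal K)$.
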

\begin{proof}
For every $A\in K$ we know from Theorem~\ref{kerneldet} that $\mathcal P^{\sigma(A)}$ can be embedded into a polynomial ring, therefore $\initial_{\grad^{\sigma(A)}}I$ is prime and hence monomial free. From Proposition~\ref{sigmaK} we now deduce that all the integer points in $\sigma(\mathcal K)$ are contained in (the support of) the tropical flag variety and, consequently, so is all of $\sigma(\mathcal K)$ since it is a rational cone. Furthermore, as shown in~\cite[Lemma 3.2.10]{MLS}, a maximal cone in the tropicalization of $F$ can have dimension at most $\dim F={n\choose 2}$ which is precisely the dimension of $\sigma(\mathcal K)$. We see that it suffices to prove the first claim and the second will follow.

We know that for every integer point $S$ in the relative interior of $\sigma(\mathcal K)$ the initial ideal $\initial_{\grad^S}I$ is the toric ideal $J$ discussed in Theorem~\ref{toric}, hence the same holds for every (not necessarily integer) point in the relative interior. To prove that $\sigma(\mathcal K)$ is a cone in the Gr\"obner fan we are to show that for every point $S$ in its relative boundary the ideal $\initial_{\grad^S}I$ differs from $J$. Again, since $\sigma(\mathcal K)$ and all of its proper faces are rational cones, it suffices to prove the last assertion for integer points $S$, i.e.\ points of the form $\sigma(A)$ where $A=(a_{i,j})\in K$ is such that at least one of the inequalities in (a) and (b) is an equality.

Now,~\cite[Theorem 14.16]{MS} provides an explicit description of $J$. It is generated by binomials of the form 
\begin{equation}\label{Jrelation}
X_{i_1,\ldots,i_k}X_{j_1,\ldots,j_l}-X_{\max(i_1,j_1),\ldots,\max(i_k,j_k)}X_{\min(i_1,j_1),\ldots,\min(i_k,j_k),j_{k+1},\ldots,j_l}
\end{equation}
where $k\le l$. 

Choose an $A=(a_{i,j})\in K$. Suppose that we have $a_{i,i+1}+a_{i+1,i+2}=a_{i,i+2}$ for some $1\le i\le n-2$. Let us show that $\initial_{\grad^{\sigma(A)}}I$ differs from $J$ by presenting a binomial of the form~(\ref{Jrelation}) which is not contained in $\initial_{\grad^{\sigma(A)}}I$. Indeed, we have 
\begin{align*}
&\initial_{\grad^A}(D_{1,\ldots,i})=z_{1,1}\ldots z_{i,i},\\
&\initial_{\grad^A}(D_{1,\ldots,i-1,i+1,i+2})=z_{1,1}\ldots z_{i-1,i-1}z_{i,i+1}z_{i+1,i+2}+z_{1,1}\ldots z_{i-1,i-1}z_{i,i+2}z_{i+1,i+1},\\
&\initial_{\grad^A}(D_{1,\ldots,i-1,i+1})=z_{1,1}\ldots z_{i-1,i-1}z_{i,i+1}\text{ and}\\
&\initial_{\grad^A}(D_{1,\ldots,i,i+2})=z_{1,1}\ldots z_{i,i}z_{i+1,i+2}.
\end{align*}
Theorem~\ref{kerneldet} is now seen to imply \[X_{1,\ldots,i}X_{1,\ldots,i-1,i+1,i+2}-X_{1,\ldots,i-1,i+1}X_{1,\ldots,i,i+2}\notin \initial_{\grad^{\sigma(A)}}I.\]

Now, suppose that $a_{i,j}+a_{i+1,j+1}=a_{i,j+1}+a_{i+1,j}$ for some $1\le i<j-1\le n-2$. Similarly to the above we observe that in this case \[X_{1,\ldots,i}X_{1,\ldots,i-1,j,j+1}-X_{1,\ldots,i-1,j}X_{1,\ldots,i,j+1}\notin \initial_{\grad^{\sigma(A)}}I.\qedhere\]
\end{proof}

Let us stress that, in view of the above theorem, the relative interior of $\sigma(\mathcal K)$ is the set of {\it all} Gr\"obner degenerations such that $\initial_{\grad^S}I=J$. General properties of Gr\"obner fans found in~\cite[Section 3.3]{MLS} can now be used to obtain the following.
\begin{cor}
The degeneration $F^{\sigma(A)}$ depends only on the minimal face of $\mathcal K$ containing $A$. Furthermore, if $A,B\in K$ are such that the minimal face of $\mathcal K$ containing $A$ contains the minimal face of $\mathcal K$ containing $B$, then $F^{\sigma(A)}$ is a Gr\"obner degeneration of $F^{\sigma(B)}$.
\end{cor}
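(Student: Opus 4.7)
The strategy is to reduce both claims to general properties of the Gr\"obner fan applied to the cone $\sigma(\mathcal{K})$. The only setup beyond citing standard facts is to translate face containments in $\mathcal{K}$ into face containments in $\sigma(\mathcal{K})$, which is immediate from Proposition~\ref{sigmaK}: since $\sigma$ is a linear injection, it induces a containment-preserving bijection between the face lattices of $\mathcal{K}$ and $\sigma(\mathcal{K})$, so the minimal face of $\mathcal{K}$ containing $A$ is sent to the minimal face of $\sigma(\mathcal{K})$ containing $\sigma(A)$. Theorem~\ref{maxcone} then tells us that $\sigma(\mathcal{K})$, and hence each of its faces, is a cone in the Gr\"obner fan of $F$.

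For the first claim, I would invoke the defining property of the Gr\"obner fan: the initial ideal $\initial_{\grad^S}I$ is constant as $S$ ranges over the relative interior of any single cone of the fan. Applying this with $S=\sigma(A)$, the ideal $\initial_{\grad^{\sigma(A)}}I$, and hence the subvariety $F^{\sigma(A)}$, is determined by the minimal cone of the Gr\"obner fan whose relative interior contains $\sigma(A)$; this is exactly the minimal face of $\sigma(\mathcal{K})$ containing $\sigma(A)$, and by the correspondence above is in turn determined by the minimal face of $\mathcal{K}$ containing $A$.

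For the second claim, I would appeal to the general Gr\"obner-fan fact that if $C'$ is a face of $C$ in the fan and $w' \in \mathrm{relint}(C')$, $w \in \mathrm{relint}(C)$, then $\initial_{\grad^{w}}I$ is itself a weighted initial ideal of $\initial_{\grad^{w'}}I$ (concretely, $\initial_{\grad^{w}}I = \initial_{\grad^{v}}\initial_{\grad^{w'}}I$ for any $v$ pointing from $C'$ into $C$, after a suitable rescaling so that $w' + \varepsilon v$ lies in $\mathrm{relint}(C)$ for small $\varepsilon>0$). Specializing to $w'=\sigma(B)$ and $w=\sigma(A)$, the hypothesis that the minimal face containing $A$ contains the one containing $B$ gives $C' \subseteq C$, whence $F^{\sigma(A)}$ is a Gr\"obner degeneration of $F^{\sigma(B)}$. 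I do not anticipate any real obstacle, since both parts follow routinely from the general theory once Theorem~\ref{maxcone} and the face correspondence are in hand; the only step that requires any care is verifying that the minimal face of $\sigma(\mathcal{K})$ containing $\sigma(A)$ coincides with the image under $\sigma$ of the minimal face of $\mathcal{K}$ containing $A$, which is a short consequence of the linearity and injectivity recorded in Proposition~\ref{sigmaK}.
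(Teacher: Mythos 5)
Your proposal is correct and matches the paper's approach: the paper proves this corollary simply by invoking the general properties of Gr\"obner fans from~\cite[Section 3.3]{MLS} together with Theorem~\ref{maxcone}, and your argument is exactly the unwinding of those facts (constancy of the initial ideal on relative interiors of fan cones, and the refinement property for faces), plus the routine transfer of the face lattice of $\mathcal K$ to that of $\sigma(\mathcal K)$ via Proposition~\ref{sigmaK}.
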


\begin{remark}
The toric degeneration is seen to be a Gr\"obner degeneration of any other Gelfand--Tsetlin degeneration. This allows us to use general properties of Gr\"obner degenerations and initial ideals to generalize various facts known about the toric degeneration to all GT degenerations. For instance, one can now easily deduce that any of the ideals $\initial_{\grad^{\sigma(A)}}I$ with $A\in K$ is generated by its quadratic part. Or that the set of all monomials $X_{i_1^1,\ldots,i_{k_1}^1}\ldots X_{i_1^N,\ldots,i_{k_N}^N}\in R$ such that the tuples $(i_1^1,\ldots,i_{k_1}^1),\ldots,(i_1^N,\ldots,i_{k_N}^N)$ are the columns of a semistandard Young tableau projects to a basis in $\mathcal P^{\sigma(A)}$ (see~\cite[Corollary 14.9]{MS}). 
\end{remark}

Without going into specifics let us explain how Theorem~\ref{maxcone} can be generalized to partial flag varieties. For $\bd\subset\{1,\dots,n-1\}$ one can consider the subspace $\Xi_\bd\subset\Xi$ consisting of $S=(s_{i_1,\ldots,i_k})$ with $s_{i_1,\ldots,i_k}=0$ whenever $k\notin\bd$. We then define the map $\sigma_\bd:\Theta^*\to\Xi_\bd$ as the composition of $\sigma$ and coordinatewise projection onto $\Xi_\bd$. The assertion is that $\sigma_\bd(\mathcal K)$ is a maximal cone in the tropicalization of $F_\bd$, the latter being contained in $\Xi_\bd\cap\Xi_0$. This can first be proved for Grassmannians by induction on $n$ and then generalized to arbitrary $\bd$ by considering a point in the relative boundary of $\sigma_\bd(\mathcal K)$ and showing that it projects into the relative boundary of some $\sigma_{\{i\}}(\mathcal K)$ with $i\in\bd$.

To complete the section let us give a fully explicit description of the maximal cone $\sigma(K)$ in the tropicalization, i.e. its minimal H-description.
\begin{proposition}
The cone $\sigma(\mathcal K)$ consists of such $S=(s_{i_1,\ldots,i_k})\in\Xi$ that all \[s_{i_1,\ldots,i_k}=\sum_{i,j} T(i_1,\ldots,i_k)_{i,j}s_{1,\ldots,i-1,j}\] and
\begin{enumerate}[label=({\alph*}$'$)]
\item $s_{1,\ldots,i-1,i+1}+s_{1,\ldots,i,i+2}\le s_{1,\ldots,i-1,i+2}$ for any $1\le i\le n-2$,
\item $s_{1,\ldots,i-1,j}+s_{1,\ldots,i,j+1}\le s_{1,\ldots,i-1,j+1}+s_{1,\ldots,i,j}$ for any $1\le i<j-1\le n-2$.
\end{enumerate}
\end{proposition}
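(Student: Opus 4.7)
The plan is to push the description of $\mathcal{K}$ through the isomorphism $\sigma:\Theta^*\to\sigma(\Theta^*)$ established in Proposition~\ref{sigmaK}, using the explicit inverse $a_{i,j}=s_{1,\ldots,i-1,j}$ recorded in the last paragraph of that proof.

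First I would verify that the listed equations cut out precisely the affine hull $\sigma(\Theta^*)$ of $\sigma(\mathcal{K})$. The forward direction is immediate from linearity of $\sigma$: if $S=\sigma(A)$ with $A=(a_{i,j})$, then by the definition of $\sigma$, for every tuple $(i_1,\ldots,i_k)$
\[
s_{i_1,\ldots,i_k}=A(T(i_1,\ldots,i_k))=\sum_{i,j}T(i_1,\ldots,i_k)_{i,j}\,a_{i,j},
\]
and specializing to the tuples $(1,\ldots,i-1,j)$ from the proof of Proposition~\ref{sigmaK} gives $a_{i,j}=s_{1,\ldots,i-1,j}$. Substituting back yields the stated linear relations. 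Conversely, if $S$ satisfies them, then setting $a_{i,j}:=s_{1,\ldots,i-1,j}$ produces an $A\in\Theta^*$ with $\sigma(A)=S$ by the same computation, so $S\in\sigma(\Theta^*)$.

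Next I would transfer the inequalities. Under the identification $a_{i,j}=s_{1,\ldots,i-1,j}$, inequality (a) becomes (a$'$) verbatim and inequality (b) becomes (b$'$) verbatim. Since $\mathcal{K}\subset\Theta^*$ is by definition cut out by (a) and (b), its image $\sigma(\mathcal{K})$ inside the affine subspace $\sigma(\Theta^*)\subset\Xi$ is cut out by (a$'$) and (b$'$). Combined with the previous step, this gives the claimed H-description.

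For the minimality claim in the surrounding text, I would count. Inequalities (a$'$) contribute $n-2$ and (b$'$) contribute $\binom{n-2}{2}$, for a total of $\binom{n-1}{2}$. By Proposition~\ref{sigmaK} the cone $\mathcal{K}$ is the product of $\bR^{n-1}$ with a simplicial cone of dimension $\binom{n-1}{2}$, hence has exactly $\binom{n-1}{2}$ facets, and the bijection between our inequalities and those of $\mathcal{K}$ forces each of (a$'$), (b$'$) to be facet-defining. The listed equations are, by the first step, in natural bijection with the tuples outside $\{(1,\ldots,i-1,j)\}$ and each introduces a distinct coordinate, so they are linearly independent; since there are exactly $2^n-2-\binom{n}{2}$ of them, this matches the codimension of $\sigma(\Theta^*)$ in $\Xi$ (by the injectivity part of Proposition~\ref{sigmaK}) and the equation system is also minimal. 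There is no real obstacle here beyond this bookkeeping; the content is entirely in translating $\mathcal{K}$ through $\sigma$ via the explicit inverse formula.
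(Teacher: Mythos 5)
Your argument is correct and is essentially the paper's own proof: the paper likewise just transports the (minimal) H-description of $\mathcal K$ through $\sigma$ using the inverse formula $a_{i,j}=s_{1,\ldots,i-1,j}$ from the proof of Proposition~\ref{sigmaK}. The extra bookkeeping you supply on the counts of equations and inequalities is consistent with what the paper asserts there.
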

\begin{proof}
The inequalities in (a) and (b) provide a minimal H-description of the cone $\mathcal K$. One then applies the map $\sigma$ to these inequalities as described in the proof of Proposition~\ref{sigmaK} to obtain the proposition.
\end{proof}

\begin{remark}
The maximal cone in the tropicalization corresponding to the toric initial ideal $J$ is also discussed in~\cite[Example 7.3]{KaM}. There a redundant H-description of this cone is given.
\end{remark}

\section{The dual construction}\label{dual}

The results in Sections \ref{bases}--\ref{tropical} can be dualized via the Dynkin diagram automorphism for type $\mathrm{A}_{n-1}$. Let us show how this dualization works and why most of it, in a sense, reduces to the results already obtained.

For a tuple $1\le i_1<\ldots<i_k\le n$ let $j_{k+1}<\ldots<j_n$ be the elements of $\{1,\ldots,n-1\}\backslash\{i_1,\ldots,i_k\}$. We define the points $\widetilde T(i_1,\ldots,i_k)\in\Theta$ by 
\begin{equation}
\widetilde T(i_1,\ldots,i_k)_{l,m}=
\begin{cases}
1\text{ if }m\ge k+1\text{ and }l=j_m,\\
0\text{ otherwise}. 
\end{cases}
\end{equation}
In other words, we have the coordinate corresponding to pair $(j_m,m)$ equal to $1$ for every $k+1\le m\le n$ with $j_m<m$ and all other coordinates zero. We then define $\widetilde\Pi_{\om_k}$ as the set of all $\widetilde T(i_1,\ldots,i_k)$ and $\widetilde\Pi_\la$ as the corresponding Minkowski sum. Let $\widetilde P_\la$ be the convex hull of $\widetilde\Pi_\la$.

Consider the involution $\eta$ of $\Theta$ with $\eta(T)_{i,j}=T_{n+1-j,n+1-i}$. In terms of Example~\ref{sl3polytopes} this is simply reflection across a vertical line.
\begin{proposition}
$\widetilde P_\la$ is unimodularly equivalent to $GT_{\la}$.
\end{proposition}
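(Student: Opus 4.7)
The plan is to reduce the claim to Lemma~\ref{gtequiv} via the Dynkin diagram duality. Write $\la=\sum_k a_k\om_k$ and set $\la^*=\sum_k a_k\om_{n-k}$ for the weight dual to $\la$ under the Dynkin involution.

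First I would verify combinatorially that $\eta$ intertwines the two constructions at the level of fundamental weights. Assuming the obvious correction that $\{j_{k+1}<\ldots<j_n\}$ is the complement of $\{i_1<\ldots<i_k\}$ in $\{1,\ldots,n\}$, a direct unraveling of the definitions of $T$ and $\widetilde T$ together with the formula $\eta(T)_{i,j}=T_{n+1-j,n+1-i}$ yields
\[
\eta(\widetilde T(i_1,\ldots,i_k))=T(n+1-j_n,\,n+1-j_{n-1},\,\ldots,\,n+1-j_{k+1}).
\]
Since the assignment $(i_1,\ldots,i_k)\mapsto(n+1-j_n,\ldots,n+1-j_{k+1})$ is a bijection between increasing $k$-tuples and increasing $(n-k)$-tuples in $\{1,\ldots,n\}$, this gives $\eta(\widetilde\Pi_{\om_k})=\Pi_{\om_{n-k}}$. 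By linearity of $\eta$ and the Minkowski sum definitions, $\eta(\widetilde\Pi_\la)=\Pi_{\la^*}$, and passing to convex hulls shows that $\eta$ is a unimodular equivalence $\widetilde P_\la\to P_{\la^*}$ (both are lattice polytopes: $\widetilde P_\la$ by definition, $P_{\la^*}$ because Lemma~\ref{gtequiv} identifies it up to unimodular equivalence with the lattice polytope $GT_{\la^*}$).

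Next, Lemma~\ref{gtequiv} directly provides a unimodular equivalence $P_{\la^*}\to GT_{\la^*}$. It remains to give a unimodular equivalence $GT_{\la^*}\to GT_\la$. For this I would use the ``complementary pattern'' involution $\tau:\Theta\to\Theta$ defined by $\tau(T)_{i,j}=\la_1-T_{n+1-j,\,n+1-i}$. Its linear part is a signed coordinate permutation, hence unimodular, and it preserves the integer lattice since $\la_1\in\bZ$. Checking $\tau(GT_\la)=GT_{\la^*}$ is a routine verification of (iii) and (iv): one substitutes and uses the identity $(\la^*)_i=\la_1-\la_{n+1-i}$, which follows from $\la^*$ being $\la$ with the coefficients $(a_1,\ldots,a_{n-1})$ reversed.

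Composing the three unimodular equivalences
\[
\widetilde P_\la\;\xrightarrow{\eta}\;P_{\la^*}\;\xrightarrow{\psi_{\la^*}}\;GT_{\la^*}\;\xrightarrow{\tau^{-1}}\;GT_\la
\]
proves the proposition. The only potential obstacle is bookkeeping: tracking which of $\la$ and $\la^*$ appears at each stage and checking that the GT inequalities transform to each other correctly under $\tau$. The underlying conceptual content is transparent: $\eta$ realizes the Dynkin symmetry that swaps $\om_k$ with $\om_{n-k}$, and the rest is the classical self-duality of Gelfand--Tsetlin combinatorics.
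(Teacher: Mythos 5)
Your proof is correct and follows essentially the same route as the paper: the identity $\eta(\widetilde T(i_1,\ldots,i_k))=T(n+1-j_n,\ldots,n+1-j_{k+1})$ gives $\eta(\widetilde P_\la)=P_{\la^*}$, Lemma~\ref{gtequiv} carries this to $GT_{\la^*}$, and one concludes by the self-duality $GT_{\la^*}\cong GT_\la$ (which the paper only asserts as ``easily seen'' but you correctly make explicit via $\tau(T)_{i,j}=\la_1-T_{n+1-j,n+1-i}$). Your reading of the complement as taken in $\{1,\ldots,n\}$ is also the right correction of the paper's typo.
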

\begin{proof}
One sees that in the above notation we have \[\widetilde T(i_1,\ldots,i_k)=\eta(T(n+1-j_n,\ldots,n+1-j_{k+1})).\] Hence, $\widetilde\Pi_{\om_k}=\eta(\Pi_{\om_{n-k}})$ and $\widetilde\Pi_\la=\eta(\Pi_{\widetilde\la})$ where $\widetilde\la$ is the image of $\la$ under the linear involution of $\fh^*$ that transposes $\om_k$ and $\om_{n-k}$. We see that $\eta(\widetilde P_\la)=P_{\widetilde \la}$, i.e.\ $\widetilde P_\la$ is unimodularly equivalent to $P_{\widetilde \la}$ and hence $GT_{\widetilde \la}$. However, $GT_{\widetilde \la}$ is easily seen to be unimodularly equivalent to $GT_\la$.
\end{proof}

Now consider the linear involution $\zeta$ of $\fn_-$ that maps $f_{i,j}$ to $-f_{n+1-j,n+1-i}$, this is a Lie algebra automorphism. The representations $L_\la$ and $L_{\widetilde \la}$ are conjugate under this automorphism, meaning that there exists a linear isomorphism $\zeta_\la:L_\la\to L_{\widetilde \la}$ such that $f\zeta_\la(v)=\zeta_\la(\zeta(f)v)$ for any $f\in\fn_-$ and $v\in L_\la$. These representations are also contragradient duals of each other but we will not be making direct use of this here.
\begin{proposition}\label{dualbasis}
The set $\{\prod f_{i,j}^{T_{i,j}}v_{\widetilde\la},T\in\Pi_{\widetilde\la}\}\subset L_{\widetilde\la}$ with the products ordered by $j$ decreasing from left to right is, up to signs, the image of the set $\{M_Tv_\la,T\in\Pi_\la\}$ under $\zeta_\la$. In particular the former set constitutes a basis in $L_{\widetilde\la}$.
\end{proposition}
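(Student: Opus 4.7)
The plan is to apply the isomorphism $\zeta_\la$ directly to the basis $\{M_T v_\la : T \in \Pi_\la\}$ supplied by Theorem~\ref{basis}. Extending $\zeta$ to an algebra automorphism of $\mU(\fn_-)$ and noting that it preserves the positive part $\fn_+$, the vector $\zeta_\la(v_\la)$ is annihilated by $\fn_+$ and hence is a highest weight vector of $L_{\widetilde\la}$; after rescaling $\zeta_\la$ we may assume $\zeta_\la(v_\la) = v_{\widetilde\la}$, so that $\zeta_\la(M v_\la) = \zeta(M)\,v_{\widetilde\la}$ for every $M \in \mU(\fn_-)$. Everything thus reduces to an explicit computation of $\zeta(M_T)$.

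Starting from the definition $M_T = \prod_{(i,j)} f_{i,j}^{T_{i,j}}$ ordered by $i$ increasing from left to right, and applying $\zeta(f_{i,j}) = -f_{n+1-j,\,n+1-i}$ termwise, one obtains up to a global sign
\[
\zeta(M_T) \;=\; \pm\!\!\prod_{(i,j)} f_{n+1-j,\,n+1-i}^{T_{i,j}}
\]
in the same left-to-right order. Re-indexing by $(i',j') = (n+1-j,\,n+1-i)$, the factor $f_{i',j'}$ acquires exponent $T_{n+1-j',\,n+1-i'} = \eta(T)_{i',j'}$, and the original ``$i$ increasing'' order becomes ``$j' = n+1-i$ decreasing.'' The commutation relations $[f_{i,j_1},f_{i,j_2}] = [f_{i_1,j},f_{i_2,j}] = 0$ (valid because the relevant sums of negative roots are not themselves roots of $\msl_n$) ensure that both orderings specify a single well-defined monomial. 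Thus $\zeta(M_T)$ equals, up to sign, the product $\prod f_{i,j}^{\eta(T)_{i,j}}$ taken with factors in order of $j$ decreasing.

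Consequently $\zeta_\la$ sends $\{M_T v_\la : T \in \Pi_\la\}$ to the set of vectors $\{\pm N_{\eta(T)}\,v_{\widetilde\la}\}$, where $N_{T'}$ denotes the product in the statement. As $T$ runs through $\Pi_\la$, the point $\eta(T)$ runs through $\eta(\Pi_\la)$, which by the identity $\widetilde\Pi_\mu = \eta(\Pi_{\widetilde\mu})$ extracted from the proof of the preceding proposition (applied with $\mu = \widetilde\la$) is precisely the indexing set in question. Since $\zeta_\la$ is a linear bijection, the resulting vectors form a basis of $L_{\widetilde\la}$. The main bookkeeping hurdle is the verification that the coordinate swap $(i,j)\mapsto(n+1-j,n+1-i)$ converts the ``$i$ increasing'' convention used for $M_T$ into precisely the ``$j$ decreasing'' convention on the image monomial; beyond this and the polytope identity just cited, no new representation-theoretic input is needed.
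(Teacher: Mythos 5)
Your argument follows the paper's own proof almost step for step: normalize so that $\zeta_\la(v_\la)=v_{\widetilde\la}$, push $\zeta$ through $M_T$ factor by factor to obtain $\pm\prod f_{i,j}^{\eta(T)_{i,j}}$ with the factors in the $j$-decreasing order, and then identify the resulting index set; the reindexing and commutation bookkeeping you spell out is exactly what the paper leaves implicit, so the core of the argument is fine.

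There is, however, one caveat in your last step, and it is worth flagging even though the paper's own proof has the same wrinkle. The identity you invoke, $\widetilde\Pi_\mu=\eta(\Pi_{\widetilde\mu})$ with $\mu=\widetilde\la$, yields $\eta(\Pi_\la)=\widetilde\Pi_{\widetilde\la}$, whereas the indexing set appearing in the statement is $\Pi_{\widetilde\la}$. These two sets are not equal in general: already for $n=3$ and $\la=\om_1$ one has $\eta(\Pi_{\om_1})=\{0,[T_{2,3}=1],[T_{1,3}=1]\}$ while $\Pi_{\om_2}=\{0,[T_{2,3}=1],[T_{1,2}=T_{2,3}=1]\}$, and in fact the $j$-decreasing monomial attached to $T(2,3)\in\Pi_{\om_2}$ is $f_{2,3}f_{1,2}$, which annihilates $v_{\om_2}=e_{1,2}$. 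So the set literally described in the statement need not be a basis; what your computation (and the paper's) actually establishes is the assertion with $\widetilde\Pi_{\widetilde\la}$ in place of $\Pi_{\widetilde\la}$. Declaring $\eta(\Pi_\la)$ to be ``precisely the indexing set in question'' therefore silently identifies $\widetilde\Pi_{\widetilde\la}$ with $\Pi_{\widetilde\la}$, which is false; this is a defect inherited from the statement rather than a flaw in your reasoning, but a careful write-up should either correct the indexing set or justify why the two families of vectors nevertheless coincide (they do not).
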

\begin{proof}
Since $v_\la$ and $v_{\widetilde\la}$ are the only highest weight vectors in the respective representations up to a scalar factor, we can assume that $\zeta_\la(v_\la)=v_{\widetilde \la}$. We see that, in view of the definitions of $\eta$ and $\zeta_\la$, for $T\in\Pi_\la$ the image $\zeta_\la(M_Tv_\la)$ is $\pm\prod f_{i,j}^{\eta(T)_{i,j}}v_{\widetilde\la}$. It remains to recall that $\eta(\Pi_\la)=\Pi_{\widetilde\la}$.
\end{proof}

To dualize the results in Section~\ref{gtdegens} one considers $A\in\Theta^*$ satisfying the same inequalities (a) and (b) as before and sets \[\widetilde\sigma(A)_{i_1,\ldots,i_k}=A(\widetilde T(i_1,\ldots,i_k)).\] Let $\eta^*$ be the involution of $\Theta^*$ dual to $\eta$, i.e.\ given by $\eta^*(A)_{i,j}=A_{n+1-j,n+1-i}$. We see that for a monomial $M\in\mU(\fn_-)$ we have $\deg^A M=\deg^{\eta^*(A)}\zeta(M)$ where we extend $\zeta$ to the universal enveloping algebra. This provides dual versions of Lemma~\ref{minmon} and Theorem~\ref{Lfiltration} via the conjugation between $L_\la$ and $L_{\widetilde\la}$, we omit the details. 

Furthermore, let $\Upsilon$ be the involution of $R$ mapping $X_{i_1,\ldots,i_k}$ to $X_{n+1-j_n,\ldots,n+1-j_{k+1}}$ where again \[\{j_{k+1},\ldots,j_n\}=\{1,\ldots,n-1\}\backslash\{i_1,\ldots,i_k\}.\] 
\begin{proposition}
The ideals $\initial_{\grad^{\widetilde\sigma(A)}}I$ and $\Upsilon(\initial_{\grad^{\sigma(\eta^*(A))}}I)$ coincide.
\end{proposition}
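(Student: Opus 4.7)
My plan is to split the statement into two ingredients and combine them formally. The first ingredient is that $\Upsilon$, as an algebra automorphism of $R$, preserves the Pl\"ucker ideal, i.e.\ $\Upsilon(I)=I$. This is classical: the Dynkin diagram automorphism of $SL_n$ composed with the longest Weyl group element $w_0$ is an involutive automorphism of the complete flag variety $F$, whose action on $\bP$ by projective equivalence sends $X_{i_1,\ldots,i_k}$ to $\pm X_{n+1-j_n,\ldots,n+1-j_{k+1}}$. Since this automorphism preserves $F\subset\bP$ set-theoretically, the induced algebra automorphism of $R$ preserves $I$; the signs on Pl\"ucker variables are irrelevant for ideal preservation, so $\Upsilon(I)=I$.

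The second ingredient is that $\Upsilon$ intertwines the two gradings: for every Pl\"ucker variable one should have
\[
\grad^{\sigma(\eta^*(A))}(\Upsilon(X_{i_1,\ldots,i_k}))=\grad^{\widetilde\sigma(A)}(X_{i_1,\ldots,i_k}),
\]
and hence $\grad^{\sigma(\eta^*(A))}(\Upsilon(M))=\grad^{\widetilde\sigma(A)}(M)$ for every monomial $M\in R$ by multiplicativity. To check this, I would unpack both sides using the definitions: the right-hand side is $A(\widetilde T(i_1,\ldots,i_k))$, while the left-hand side is $\eta^*(A)(T(L))$, where $L=(l_1,\ldots,l_{n-k})$ is the tuple with $l_p=n+1-j_{n-p+1}$ appearing in $\Upsilon(X_{i_1,\ldots,i_k})$. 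Using $\eta^*(A)_{i,j}=A_{n+1-j,n+1-i}$ and re-indexing the sum, the identity reduces to the pointwise equality $T(L)_{n+1-j,\,n+1-i}=\widetilde T(i_1,\ldots,i_k)_{i,j}$ for all $1\le i<j\le n$, which is a direct comparison of the two combinatorial definitions (the support conditions $l_p>p$ and $j_m<m$ automatically match under the reflection).

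With both ingredients in hand the rest is formal. For any $p\in I$, ingredient (I) gives $\Upsilon(p)\in I$, while (II) implies that $\Upsilon$ sends the monomials of minimal $\grad^{\widetilde\sigma(A)}$-degree in $p$ bijectively to the monomials of minimal $\grad^{\sigma(\eta^*(A))}$-degree in $\Upsilon(p)$, so that $\Upsilon(\initial_{\grad^{\widetilde\sigma(A)}}p)=\initial_{\grad^{\sigma(\eta^*(A))}}\Upsilon(p)$. This yields $\Upsilon(\initial_{\grad^{\widetilde\sigma(A)}}I)\subset\initial_{\grad^{\sigma(\eta^*(A))}}I$, and the reverse inclusion follows by applying $\Upsilon$ again and using $\Upsilon^2=\id$. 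The main obstacle I expect is simply the bookkeeping in (II): the formulas for $T$, $\widetilde T$, $\eta^*$ and $\Upsilon$ each reverse indices in slightly different ways, and keeping track of which reversals cancel and which accumulate takes some care; ingredient (I) is standard and the final assembly is elementary.
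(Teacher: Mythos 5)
Your proposal is correct and follows exactly the paper's argument, which consists of the same two ingredients: $\Upsilon(I)=I$ and the identity $\grad^{\widetilde\sigma(A)}X_{i_1,\ldots,i_k}=\grad^{\sigma(\eta^*(A))}\Upsilon(X_{i_1,\ldots,i_k})$, combined formally. You simply spell out the combinatorial verification and the justification of $\Upsilon(I)=I$ in more detail than the paper does.
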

\begin{proof}
This follows from $\Upsilon(I)=I$ and $\grad^{\widetilde\sigma(A)}X_{i_1,\ldots,i_k}=\grad^{\sigma(\eta^*(A))}\Upsilon(X_{i_1,\ldots,i_k})$.
\end{proof}
The dual versions of the results concerned with the Gr\"obner degeneration $F^{\widetilde\sigma(A)}$ in Sections~\ref{gtdegens} and~\ref{singular} are now obtained straightforwardly and we, again, do not go into details. 

We point out, however, that in the dual version of Theorem~\ref{toric} the initial ideal obtained when all inequalities in (a) and (b) are strict will not be the toric ideal $J$, instead we will have $\initial_{\grad^{\widetilde\sigma(A)}I}=\Upsilon(J)$. The variety $F^{\widetilde\sigma(A)}$ will again be the toric variety of the GT polytope but $\Upsilon(J)$ provides a different projective embedding thereof. This means that when dualizing the results in Section~\ref{tropical} we obtain a different maximal cone in the tropical flag variety:
\begin{theorem}
The cone $\widetilde\sigma(K)$ is a maximal cone in the tropicalization of $F$ which is different from $\sigma(K)$ when $n\ge 4$.
\end{theorem}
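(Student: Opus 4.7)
The plan is to split the statement into two parts: (i) $\widetilde\sigma(\mathcal K)$ is a maximal cone of the tropicalization of $F$, and (ii) $\widetilde\sigma(\mathcal K)\neq \sigma(\mathcal K)$ when $n\ge 4$.

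For (i) I would dualize the proof of Theorem~\ref{maxcone}. The involution $\Upsilon$ preserves $I$ and hence induces an automorphism of the Gr\"obner fan of $F$ that sends tropical cones to tropical cones. The proposition established just above in this section, $\initial_{\grad^{\widetilde\sigma(A)}}I=\Upsilon(\initial_{\grad^{\sigma(\eta^*(A))}}I)$, together with the fact that $\eta^*$ is an involution of $\Theta^*$ preserving $\mathcal K$ (it merely permutes the defining inequalities (a),(b) among themselves), identifies $\widetilde\sigma(\mathcal K)$ with the image of $\sigma(\mathcal K)$ under the linear automorphism of $\Xi_0$ induced by $\Upsilon$. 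Hence $\widetilde\sigma(\mathcal K)$ inherits all properties established in Theorem~\ref{maxcone}: it is a full-dimensional cone of the Gr\"obner fan of $F$ contained in the tropicalization, and thus maximal therein. Its relative interior is exactly $\{S\in\Xi_0:\initial_{\grad^S}I=\Upsilon(J)\}$.

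For (ii), since the relative interior of $\sigma(\mathcal K)$ consists of those $S$ with $\initial_{\grad^S}I=J$, the two cones must differ as soon as $J\neq\Upsilon(J)$. I would exhibit an explicit witness: take
\[p=X_1 X_{2,3}-X_2 X_{1,3}\in R,\]
which is a relation of the form~(\ref{Jrelation}) and therefore lies in $J$. A direct computation of $\Upsilon$ on Pl\"ucker coordinates yields, for any $n\ge 4$,
\[\Upsilon(p)=X_{1,2,\ldots,n-1}\,X_{1,2,\ldots,n-3,n}-X_{1,2,\ldots,n-3,n-2,n}\,X_{1,2,\ldots,n-3,n-1}.\]
Applying the toric map $X_{i_1,\ldots,i_k}\mapsto\prod_{l=1}^k z_{l,i_l}$, both terms share the common factor $\prod_{l=1}^{n-3}z_{l,l}^2\cdot z_{n-2,n-2}$, but differ in the remaining part: $z_{n-1,n-1}z_{n-2,n}$ versus $z_{n-2,n-1}z_{n-1,n}$. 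These are distinct monomials, so the toric map does not annihilate $\Upsilon(p)$. Hence $\Upsilon(p)\notin J$, i.e.\ $p\notin\Upsilon(J)$, proving $J\neq\Upsilon(J)$ for $n\ge 4$.

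I do not anticipate any conceptual difficulty. The most delicate point is step (i)---specifically, verifying that $\eta^*$ preserves $\mathcal K$ and that composing $\sigma$ with $\eta^*$ and $\Upsilon$ truly recovers $\widetilde\sigma$ (rather than some other transform). All remaining arguments are routine bookkeeping with Dynkin-diagram involutions and minors of $\zeta$.
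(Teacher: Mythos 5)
Your proposal is correct and follows essentially the same route as the paper, which states this theorem without a formal proof and relies on exactly the dualization you describe: the preceding proposition $\initial_{\grad^{\widetilde\sigma(A)}}I=\Upsilon(\initial_{\grad^{\sigma(\eta^*(A))}}I)$ together with the $\eta^*$-invariance of $\mathcal K$ transports Theorem~\ref{maxcone} to $\widetilde\sigma(\mathcal K)$, and the difference of the two cones reduces to $J\neq\Upsilon(J)$. Your explicit witness $X_1X_{2,3}-X_2X_{1,3}$ supplies the one detail the paper leaves implicit, and your computation of its image under $\Upsilon$ and the toric map checks out.
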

Thus we have explicit descriptions of two different series of maximal cones in tropical flag varieties. This pair of cones is transposed by the action of $\bZ/2\bZ$ on the tropical flag variety, see~\cite{BLMM}. This is in contrast with the maximal cone in in the tropicalization that was obtained in~\cite{fafefom}, since the latter is invariant under the $\bZ/2\bZ$-action.

To dualize the results in Section~\ref{degenact} one considers the associative algebra $\widetilde\Phi_n$ with generators $\widetilde\varphi_{i,j},1\le i<j\le n$ and relations $\widetilde\varphi_{i_1,j_1}\widetilde\varphi_{i_2,j_2}=0$ whenever $j_1<j_2$ and $\widetilde\varphi_{i_1,j}\widetilde\varphi_{i_2,j}=\widetilde\varphi_{i_2,j}\widetilde\varphi_{i_1,j}$ for all $1\le i_1<i_2<j\le n$. This algebra acts on $L_\la$ by $\widetilde\varphi_{i,j}$ acting like $f_{i,j}$ on $\mU(\bigoplus_{m\le j}\bC f_{l,m})v_\la$ and annihilating all weight vectors outside of this space. 

There is an isomorphism $\Psi$ between $\Phi_n$ and $\widetilde\Phi_n$ mapping $\varphi_{i,j}$ to $-\widetilde\varphi_{n+1-j,n+1-i}$.
\begin{proposition}
Define another action of $\Phi_n$ on the space $L_{\la}$ by letting $\varphi_{i,j}$ act as $\Psi(\varphi_{i,j})$ in the above $\widetilde\Phi_n$-action. The obtained $\Phi_n$-module is isomorphic to $L_{\widetilde\la}$ with the action considered in Section~\ref{degenact}.
\end{proposition}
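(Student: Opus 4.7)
The plan is to show that the conjugation isomorphism $\zeta_\la\colon L_\la\to L_{\widetilde\la}$ (normalized so that $\zeta_\la(v_\la)=v_{\widetilde\la}$) also intertwines the two $\Phi_n$-module structures in question. Extending $\zeta$ to an involutive algebra automorphism of $\mU(\fn_-)$ and iterating the defining relation $f\zeta_\la(u)=\zeta_\la(\zeta(f)u)$, the involutivity of $\zeta$ gives the equivalent formulation $\zeta_\la(g\cdot w)=\zeta(g)\cdot\zeta_\la(w)$ for all $g\in\mU(\fn_-)$ and $w\in L_\la$. Since $\Phi_n$ is generated by the $\varphi_{i,j}$, it then suffices to verify the intertwining $\zeta_\la(\Psi(\varphi_{i,j})\cdot w)=\varphi_{i,j}\cdot\zeta_\la(w)$ for each generator and all $w\in L_\la$.

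The first step is to match the ``active subspaces''. The operator $\Psi(\varphi_{i,j})=-\widetilde\varphi_{n+1-j,n+1-i}$ acts on $L_\la$ as $-f_{n+1-j,n+1-i}$ on $\mU(\bigoplus_{m\le n+1-i}\bC f_{l,m})v_\la$ and annihilates weight vectors outside, whereas $\varphi_{i,j}$ acts on $L_{\widetilde\la}$ as $f_{i,j}$ on $L_{\widetilde\la}(i)=\mU(\fn_-(i))v_{\widetilde\la}$ and annihilates weight vectors outside. Since $\zeta(f_{l,m})=-f_{n+1-m,n+1-l}$, the condition $m\le n+1-i$ translates to $n+1-m\ge i$, so $\zeta\bigl(\bigoplus_{m\le n+1-i}\bC f_{l,m}\bigr)=\fn_-(i)$, and hence $\zeta_\la$ carries $\mU(\bigoplus_{m\le n+1-i}\bC f_{l,m})v_\la$ bijectively onto $L_{\widetilde\la}(i)$. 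A short PBW argument shows that both of these subspaces are direct sums of full $\fh$-weight spaces (namely those $(L_\la)_\mu$ and $(L_{\widetilde\la})_\nu$ for which $\la-\mu$, respectively $\widetilde\la-\nu$, involves only the appropriate simple roots), and since $\zeta_\la$ maps weight spaces bijectively to weight spaces (via the dual diagram automorphism on weights), the complementary sets of weight vectors also correspond under $\zeta_\la$.

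With the subspaces matched, the intertwining on each generator reduces to a one-line computation: for $w$ in the active subspace,
\[
\zeta_\la\bigl(\Psi(\varphi_{i,j})\cdot w\bigr)=\zeta_\la(-f_{n+1-j,n+1-i}\,w)=-\zeta(f_{n+1-j,n+1-i})\,\zeta_\la(w)=f_{i,j}\,\zeta_\la(w)=\varphi_{i,j}\cdot\zeta_\la(w),
\]
the two minus signs (one from $\Psi$, one from $\zeta(f_{n+1-j,n+1-i})=-f_{i,j}$) cancelling, while for weight vectors outside the active subspace on the left both sides vanish by the previous step. The main obstacle is the subspace bookkeeping — in particular, checking that $\mU(\bigoplus_{m\le n+1-i}\bC f_{l,m})v_\la$ is a union of full $\fh$-weight spaces so that the prescription defining $\widetilde\varphi_{n+1-j,n+1-i}$ is unambiguous; this follows because any PBW monomial whose total root-decrease lies in $\sum_{r\le n-i}\bZ_{\ge 0}\alpha_r$ can only involve factors $f_{l,m}$ with $m\le n+1-i$, the remaining $f$'s contributing positive coefficients to simple roots $\alpha_r$ with $r>n-i$ which cannot cancel.
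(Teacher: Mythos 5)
Your proof is correct and follows essentially the same route as the paper: both use the conjugation isomorphism $\zeta_\la$ (normalized by $\zeta_\la(v_\la)=v_{\widetilde\la}$), observe that $\zeta$ carries the ``active subspace'' $\mU(\bigoplus_{m\le n+1-i}\bC f_{l,m})v_\la$ onto $L_{\widetilde\la}(i)$, and then check the generator-by-generator intertwining with the two signs cancelling. Your extra remark that the active subspaces are unions of full weight spaces is a harmless additional verification of a point the paper already records for the $L_\la(k)$ in Section~\ref{degenact}.
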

\begin{proof}
The isomorphism is given by the map $\zeta_\la$. Indeed, for $2\le j\le n$ the involution $\zeta$ maps $\bigoplus_{m\le j}\bC f_{l,m}$ bijectively onto $\fn_-(n+1-j)$ and, therefore, $\zeta_\la$ maps $\mU(\bigoplus_{m\le j}\bC f_{l,m})v_{\la}$ bijectively onto $L_{\widetilde\la}(n+1-j)$. We now see that for a weight vector $v\in L_\la$ if $v\in\mU(\bigoplus_{m\le j}\bC f_{l,m})v_{\la}$, then \[\zeta_\la(\Psi(\varphi_{n+1-j,n+1-i})v)=-\zeta_\la(f_{i,j}v)=f_{n+1-j,n+1-i}\zeta_\la(v)=\varphi_{n+1-j,n+1-i}\zeta_\la(v),\] and if $v\notin\mU(\bigoplus_{m\le j}\bC f_{l,m})v_{\la}$, then \[\zeta_\la(\Psi(\varphi_{n+1-j,n+1-i})v)=0=\varphi_{n+1-j,n+1-i}\zeta_\la(v).\qedhere\]
\end{proof}
In other words, $\zeta_\la$ establishes an isomorphism between the $\Phi_n$-module $L_\la$ and the $\widetilde\Phi_n$-module $L_{\widetilde\la}$ modulo the isomorphism $\Psi$. Further details regarding the dualization of results concerned with the action of $\Phi_n$ are now recovered straightforwardly.

\section{Addendum: approach via non-abelian gradings}\label{addendum}

In this section we propose an alternative solution to the original problem of realizing the Gelfand--Tsetlin toric variety in a context of degenerate representation theory. Let us first recall the following standard definitions concerning initial ideals in free associative algebras. These notions originate from~\cite{Be}. 

Let $\mathcal F_n$ be the free associative $\bC$-algebra generated by the symbols $\hat f_{i,j}$ with $1\le i<j\le n$. We have the surjection $\mathcal F_n\twoheadrightarrow\mU(\fn_-)$ mapping $\hat f_{i,j}$ to $f_{i,j}$, denote $\mathcal I$ the kernel of this surjection. 

Now consider the set of monomials (i.e. products of $\hat f_{i,j}$) in $\mathcal F_n$. Under multiplication these monomials form the free monoid $\{f_{i,j}\}^*$, let $\prec$ be a total ordering of this monoid, i.e. a total order on the monomials such that for any monomials $x$, $y$ and $z$ the condition $x\prec y$ implies $xz\prec yz$ and $zx\prec zy$. Then for any $f\in\mathcal F_n$ we may define its initial part $\initial_\prec f$ as the $\prec$-\emph{maximal} monomial occurring in $f$. The initial (two-sided) ideal $\initial_{\prec}\mathcal I$ is then spanned by the monomials $\initial_\prec f$ with $f$ ranging over $\mathcal I$.

Furthermore, for an integral dominant weight $\la$ let $\mathcal I_\la\subset\mathcal F_n$ be the left ideal annihilating $L_\la$. Then $\initial_\prec\mathcal I_\la$ defined as the span of the initial parts of elements of $\mathcal I_\la$ will be a left ideal containing $\initial_{\prec}\mathcal I$. In other words, we obtain a left module $L_\la^\prec=\mathcal F_n/\initial_\prec\mathcal I_\la$ over $\mU^\prec=\mathcal F_n/\initial_{\prec}\mathcal I$. This module is generated by the vector $v_\la^\prec$, the image of $1$.

A key property of the algebra $\mU^\prec$ is that it is graded by the non-abelian monoid $\{f_{i,j}\}^*$ in a way that respects the non-commutative multiplication. The modules $L_\la^\prec$ are also $\{f_{i,j}\}^*$-graded in a way that respects the left $\mU^\prec$-action. There is another way of defining these graded objects: as associated graded spaces.

The order $\prec$ on $\{f_{i,j}\}^*$ defines a filtration of $\mU(\fn_-)$ by this totally ordered set. For $x\in \{f_{i,j}\}^*$ the filtration component $\mU(\fn_-)_{\preceq x}$ is defined as the span of all PBW monomials $f_{i_1,j_1}\ldots f_{i_m,j_m}$ such that $\hat f_{i_1,j_1}\ldots \hat f_{i_m,j_m}\preceq x$. We also define the space $\mU(\fn_-)_{\prec x}$ as the span of all PBW monomials $f_{i_1,j_1}\ldots f_{i_m,j_m}$ such that $\hat f_{i_1,j_1}\ldots \hat f_{i_m,j_m}\prec x$. We then have the associated $\{f_{i,j}\}^*$-graded space \[\gr\mathcal U(\fn_-)=\bigoplus_{x\in\{f_{i,j}\}^*} \mU(\fn_-)_{\preceq x}/\mU(\fn_-)_{\prec x}.\] In view of the condition on $\prec$ the space $\gr\mathcal U(\fn_-)$ inherits a multiplicative structure from $\mU(\fn_-)$ and is, in fact, an associated graded algebra.

Further, the $\{f_{i,j}\}^*$-filtration on $\mU(\fn_-)$ induces a $\{f_{i,j}\}^*$-filtration on every $L_\la$ by acting on $v_\la$ and we may again consider the associated $\{f_{i,j}\}^*$-graded spaces $\gr L_\la$. The following fact is proved in complete analogy with Proposition~\ref{initdegen}.
\begin{proposition}\label{adinitdegen}
$\gr\mU(\fn_-)$ and $\mU^\prec$ are isomorphic as $\{f_{i,j}\}^*$-graded algebras while $\gr L_\la$ and $L_\la^\prec$ are isomorphic as graded modules over these algebras.
\end{proposition}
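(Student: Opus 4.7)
The plan is to mimic the proof of Proposition~\ref{initdegen} essentially verbatim, with the commutative $\bZ$-grading replaced by the non-commutative $\{f_{i,j}\}^*$-grading. The key observation is that $\mathcal F_n$ is itself $\{f_{i,j}\}^*$-graded, with each monomial $y$ spanning its own one-dimensional component. I will first define a map $\Phi\colon \mathcal F_n\to \gr\mU(\fn_-)$ by sending a monomial $y = \hat f_{i_1,j_1}\cdots\hat f_{i_m,j_m}$ to the class of $\bar y := f_{i_1,j_1}\cdots f_{i_m,j_m}$ inside $\mU(\fn_-)_{\preceq y}/\mU(\fn_-)_{\prec y}$, then show that $\Phi$ is a surjective graded algebra homomorphism with kernel $\initial_\prec\mathcal I$.

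Well-definedness (i.e.\ $\bar y\in \mU(\fn_-)_{\preceq y}$) and multiplicativity of $\Phi$ will follow from the assertion preceding the proposition that $(\mU(\fn_-)_{\preceq x})$ is a filtered algebra structure. Surjectivity will be immediate: each homogeneous component of $\gr\mU(\fn_-)$ is spanned by classes of ordered PBW monomials, and each such class is the image under $\Phi$ of the corresponding monomial in $\mathcal F_n$. For the kernel I will follow Proposition~\ref{initdegen} word for word: a $\{f_{i,j}\}^*$-homogeneous element $p=c\cdot y\in\mathcal F_n$ lies in $\ker\Phi$ iff $c\bar y\in\mU(\fn_-)_{\prec y}$, iff there exists $q\in\mathcal F_n$ whose monomials are all $\prec y$ with $c\bar y=\bar q$ in $\mU(\fn_-)$, iff $p-q\in\mathcal I$ with $\initial_\prec(p-q)=p$, iff $p\in\initial_\prec\mathcal I$. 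Hence $\Phi$ descends to the desired isomorphism $\mU^\prec=\mathcal F_n/\initial_\prec\mathcal I\xrightarrow{\sim}\gr\mU(\fn_-)$ of $\{f_{i,j}\}^*$-graded algebras.

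For the module part I will apply the identical recipe to the surjection $\mathcal F_n\twoheadrightarrow L_\la$, $a\mapsto \bar a v_\la$: define $\Phi_\la\colon \mathcal F_n\to\gr L_\la$ by sending $y$ to the class of $\bar y v_\la$ in $(L_\la)_{\preceq y}/(L_\la)_{\prec y}$. The map $\Phi_\la$ is $\Phi$-equivariant and surjective since $v_\la^\prec$ generates $L_\la^\prec$ (hence $\gr L_\la$), and the kernel computation is verbatim the one above with $\mathcal I_\la$ in place of $\mathcal I$, giving the second isomorphism as graded $\mU^\prec$-modules.

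The main obstacle — much as in Proposition~\ref{initdegen} — is less a mathematical one than a bookkeeping one: the core identity \emph{``kernel of the projection to the associated graded equals the initial ideal''} is a general phenomenon, and once one grants the multiplicativity of the filtration (which the paragraph before the proposition asserts in view of the monoidal property of $\prec$), both isomorphisms drop out of the same formal argument. The only apparent novelty compared to Proposition~\ref{initdegen} is that the grading semigroup $\{f_{i,j}\}^*$ is non-abelian, but this plays no role in the argument since the only property of $\prec$ used is left/right cancellation, which is built into the definition of a monoidal total order.
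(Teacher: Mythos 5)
Your proof is correct and is exactly the argument the paper intends: the paper gives no details beyond asserting that the proposition ``is proved in complete analogy with Proposition~\ref{initdegen},'' and your write-up is precisely that analogy carried out (the map $\mathcal F_n\to\gr\mU(\fn_-)$, multiplicativity from the translation-invariance of $\prec$, and the kernel computation identifying $\initial_\prec\mathcal I$ and $\initial_\prec\mathcal I_\la$). The only nitpick is terminological: the property of $\prec$ you use is compatibility with left and right multiplication (translation-invariance), not cancellation.
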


\begin{remark}\label{notmonomial}
To obtain abelian PBW degenerations as well as the degenerations from~\cite{favourable} and~\cite{fafefom} one needs to generalize this setting. Namely, one needs to consider total orderings of an arbitrary semigroup $\Delta$ equipped with a homomorphism $\{f_{i,j}\}^*\to\Delta$ (in the mentioned cases $\Delta$ is either $\bZ$ or $\bZ^{n(n-1)/2}$). One then obtains a $\Delta$-grading on $\mathcal F_n$ and the total order on $\Delta$ induces a partial monomial order on $\mathcal F_n$. The key novelty of the present construction is that we consider a non-abelian $\Delta$ (namely, all of $\{f_{i,j}\}^*$).
\end{remark}

We now specialize to a particular order $\prec$ which is defined as follows. For a monomial $x=\hat f_{i_1,j_1}\ldots\hat f_{i_m,j_m}$ define $|x|=\sum_k (j_k-i_k)$. Now consider two monomials $x=\hat f_{i_1,j_1}\ldots\hat f_{i_m,j_m}$ and $y=\hat f_{p_1,q_1}\ldots\hat f_{p_r,q_r}$. First, we set $x\prec y$ whenever $|x|<|y|$. Now, if $|x|=|y|$ we compare the monomials lexicographically. Namely we consider the least such $k$ that $(i_k,j_k)\neq(p_k,q_k)$ and set $x\prec y$ whenever $i_k<p_k$ or $i_k=p_k$ and $j_k<q_k$. Note that $|x|=|y|$ ensures that neither of $x$ and $y$ is a prefix of the other. 

\begin{remark}\label{wthomogeneous}
We introduce the function $|x|$ rather than simply comparing monomials lexicographically in order to avoid indefinite situations in which one monomial is a prefix of the other. However, we could consider any other increasing function $g$ on $[1,n]$ and set $|x|=\sum_k (g(j_k)-g(i_k))$ instead. It is easily seen that this would not alter the initial ideals $\initial_\prec\mathcal I$ and $\initial_\prec\mathcal I_\la$. This is a special case of the following general principle: if an ideal is homogeneous with respect to some grading, then its initial ideal is determined by the order relations between pairs of monomials of the same grading. In our case the ideals are homogeneous with respect to weight, i.e. the grading $\wt(\hat f_{i,j})=\alpha_{i,j}$.
\end{remark}

The algebra $\mU^\prec$ is easy to describe. Denote $\chi_{i,j}\in\mU^\prec$ the image of $\hat f_{i,j}$, the $\chi_{i,j}$ generate $\mU^\prec$. 

\begin{proposition}\label{nonzeromons}
A product $\chi_{i_1,j_1}\ldots\chi_{i_m,j_m}$ is nonzero if and only if $i_1\le\ldots\le i_m$ and $j_{k}\le j_{k+1}$ whenever $i_k=i_{k+1}$. These nonzero products form a basis in $\mU^\prec$.
\end{proposition}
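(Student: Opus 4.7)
The plan is to use the commutation relations of $\mU(\fn_{-})$ to show that disordered products vanish in $\mU^\prec$, and then invoke Proposition~\ref{adinitdegen} together with the classical PBW theorem to show that the ordered products are linearly independent and thus form a basis.

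For the first task, fix a pair $(a,b)$, $(c,d)$ with $a<b$, $c<d$ and $(a,b)>(c,d)$ in lex order. A short case analysis on how $\{a,b\}$ overlaps $\{c,d\}$ shows that $[f_{a,b},f_{c,d}]$ in $\mU(\fn_{-})$ is either zero, or, in the single remaining case $a=d$, equals $f_{c,b}$. Hence
\[
\hat f_{a,b}\hat f_{c,d}-\hat f_{c,d}\hat f_{a,b}-\varepsilon \hat f_{p,q}\in\mathcal I
\]
with $\varepsilon\in\{0,1\}$ and, when $\varepsilon=1$, $(p,q)=(c,b)$ satisfies $p<a$. All the monomials appearing in this relation have the same value of $|\cdot|$ (the relation is weight-homogeneous, cf.\ Remark~\ref{wthomogeneous}), so $\prec$ reduces to lex on the first letters. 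The first coordinate $a$ strictly exceeds the first coordinates $c$ and (when applicable) $p=c$ of the remaining monomials, so the initial part of the above element is $\hat f_{a,b}\hat f_{c,d}$. Therefore $\hat f_{a,b}\hat f_{c,d}\in\initial_\prec\mathcal I$; since the latter is a two-sided ideal, any monomial in $\mathcal F_n$ containing an adjacent disordered pair lies in it, so its image in $\mU^\prec$ vanishes. This already shows that $\mU^\prec$ is spanned by the products $\chi_{i_1,j_1}\cdots\chi_{i_m,j_m}$ satisfying the conditions of the proposition.

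For linear independence, identify $\mU^\prec$ with $\gr\mU(\fn_{-})$ via Proposition~\ref{adinitdegen}; the isomorphism is multiplicative, so it sends $\chi^x=\chi_{i_1,j_1}\cdots\chi_{i_m,j_m}$ to the class of $\bar x=f_{i_1,j_1}\cdots f_{i_m,j_m}$ in $\mU(\fn_{-})_{\preceq x}/\mU(\fn_{-})_{\prec x}$. The classical PBW theorem applied to the lex ordering on the generators $f_{i,j}$ asserts that, as $x=\hat f_{i_1,j_1}\cdots\hat f_{i_m,j_m}$ ranges over words satisfying the ordered condition of the proposition, the elements $\bar x$ form a basis of $\mU(\fn_{-})$. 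By the definition of the filtration $\bar x\in\mU(\fn_{-})_{\preceq x}$, while $\bar x\notin\mU(\fn_{-})_{\prec x}$ because the latter is spanned by those $\bar y$ with $y\prec x$, which are basis vectors of $\mU(\fn_{-})$ distinct from $\bar x$. Hence the classes of the $\bar x$ are nonzero and lie in distinct homogeneous components of $\gr\mU(\fn_{-})$, and so are linearly independent; under the isomorphism they correspond precisely to the $\chi^x$ of the proposition, which is therefore a basis of $\mU^\prec$.

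The main obstacle is the case analysis in the first step: one must check that when the bracket $[f_{a,b},f_{c,d}]$ contributes a single-letter term $\pm \hat f_{p,q}$, this term does not accidentally dominate the two-letter product $\hat f_{a,b}\hat f_{c,d}$ in $\prec$. This is the raison d'être of the auxiliary function $|\cdot|$ in the definition of $\prec$: weight-homogeneity places all three summands of the straightening relation on the same $|\cdot|$-level, reducing the question to a comparison of first letters in lex, which is won by $(a,b)$ by construction.
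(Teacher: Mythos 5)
Your first half---deriving the quadratic straightening relation, using the weight-homogeneity of $\mathcal I$ to reduce the comparison to the lexicographic rule, and then invoking the two-sided ideal property of $\initial_\prec\mathcal I$---is essentially the paper's argument (the paper swaps the disordered pair inside the full monomial rather than in a length-two generator, which is the same computation). One small slip there: when $a=c$ and $b>d$ the first coordinates of the competing first letters coincide, so the comparison is won via $b>d$, not because ``the first coordinate $a$ strictly exceeds $c$''; the conclusion stands since all you need is $(a,b)\succ(c,d)$ as letters.

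The linear independence step, however, has a genuine gap. You assert that $\bar x\notin\mU(\fn_-)_{\prec x}$ ``because the latter is spanned by those $\bar y$ with $y\prec x$, which are basis vectors of $\mU(\fn_-)$ distinct from $\bar x$.'' But $\mU(\fn_-)_{\prec x}$ is by definition spanned by the images $\bar y$ of \emph{all} words $y\prec x$, disordered ones included; for a disordered $y$ the element $\bar y$ is not a PBW basis vector but a linear combination of them, and nothing said so far excludes $\bar x$ from appearing in that combination with a nonzero coefficient. The repair uses exactly your first half: each straightening move replaces a word by words strictly smaller in $\prec$, and the set of words of a fixed $\wt$-weight is finite, so the rewriting terminates and expresses $\bar y$ as a combination of $\bar z$ with $z$ ordered and $z\prec y\prec x$; hence $\mU(\fn_-)_{\prec x}$ lies in the span of the PBW basis vectors other than $\bar x$, and $\bar x$ survives in its graded component. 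With that observation your argument is complete. Note that the paper's own second half is different in flavor: it is a character count ($\initial_\prec\mathcal I$ has the same $\wt$-character as $\mathcal I$, and already contains the span of the disordered monomials, whose complement has the character of $\mU(\fn_-)$ by PBW, forcing equality), which sidesteps the pointwise non-vanishing question entirely.
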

\begin{proof}
Consider a monomial $x=\hat f_{i_1,j_1}\ldots\hat f_{i_m,j_m}$ such that for some $k$ either $i_k>i_{k+1}$ or $i_k=i_{k+1}$ and $j_k>j_{k+1}$. Let $y$ be obtained from $x$ by exchanging $f_{i_k,j_k}$ and $f_{i_{k+1},j_{k+1}}$. The ideal $\mathcal I$ contains an element of the form $x-y-z$ such that either $z=0$ or $z\prec x$ (due to the commutation relations in $\fn_-$). Since we also have $y\prec x$, we see that $x\in\initial_\prec\mathcal I$. The fact that such $x$ span $\initial_\prec\mathcal I$ follows from $\initial_\prec\mathcal I$ having the same character as $\mathcal I$ (i.e. the same dimensions of $\wt$-graded components in terms of Remark~\ref{wthomogeneous}).
\end{proof}

For $T\in\bZ_{\ge 0}^{\{1\le i<j\le n\}}$ denote $\chi^T=\prod_{i,j} \chi_{i,j}^{T_{i,j}}$ with the factors ordered so that the product is nonzero. The following fact explains the usefulness of the order $\prec$.
\begin{lemma}\label{annideal}
For any integral dominant weight $\la$ the $\mU^\prec$-module $L_\la^\prec$ is isomorphic to $\mU^\prec/\mathcal J_\la$ where $\mathcal J_\la$ is the left ideal spanned by $\chi^T$ with $T\notin\Pi_\la$.
\end{lemma}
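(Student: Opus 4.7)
The plan is to prove the lemma by combining a dimension count with a $\prec$-analogue of the argument behind Theorem~\ref{basis}. For the reduction: since $\mathcal J_\la$ is the left ideal generated by the basis elements $\{\chi^T:T\notin\Pi_\la\}$ of $\mU^\prec$, the quotient $\mU^\prec/\mathcal J_\la$ is spanned as a $\bC$-vector space by the images of $\chi^T$ for $T\in\Pi_\la$, so $\dim\mU^\prec/\mathcal J_\la\leq|\Pi_\la|$. By Proposition~\ref{adinitdegen} and Theorem~\ref{basis} we have $\dim L_\la^\prec=\dim L_\la=|\Pi_\la|$. The various $\chi^Tv_\la^\prec$ lie in distinct $\{\hat f_{i,j}\}^*$-graded components of $L_\la^\prec$, so the nonzero ones among them are automatically linearly independent. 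It therefore suffices to show that $\chi^Tv_\la^\prec\neq 0$ for every $T\in\Pi_\la$: this produces at least $|\Pi_\la|$ independent elements in a $|\Pi_\la|$-dimensional space, forcing $\chi^Tv_\la^\prec=0$ for all $T\notin\Pi_\la$ (and hence $\mathcal J_\la$ to annihilate $v_\la^\prec$), after which the natural surjection $\mU^\prec/\mathcal J_\la\twoheadrightarrow L_\la^\prec$ is an isomorphism by dimensions.

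To show $\chi^Tv_\la^\prec\neq 0$, i.e.\ $M_Tv_\la\notin(L_\la)_{\prec\hat M_T}$, for $T\in\Pi_\la$, I would first establish the $\prec$-analogue of Lemma~\ref{minmon}: among ordered monomials $M$ with $Mv_{\om_k}\in\bC^*e_{i_1,\ldots,i_k}$, the canonical monomial $M_{T(i_1,\ldots,i_k)}$ is the unique $\prec$-minimum. The proof of Lemma~\ref{minmon} employs two reduction moves, each preserving $Mv_{\om_k}$ up to sign: a transposition via (b) and the factorization $f_{l,m}\leadsto f_{l,l+1}f_{l+1,m}$ via (a). A direct check shows that each move increments the coordinate of $T$ at the lex-smallest index among those it affects (namely $(l_i,m_{i+1})$ and $(l,l+1)$ respectively), leaves all strictly lex-earlier coordinates unchanged, and preserves $|\hat M|$. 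Using the characterization that $\hat M_{T'}\prec\hat M_T$ iff $T'_{i_0,j_0}>T_{i_0,j_0}$ at the lex-first coordinate $(i_0,j_0)$ where $T$ and $T'$ differ, both moves strictly decrease $\hat M$ in the $\prec$-order, yielding the desired analogue.

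Equipped with this, I would imitate the proof of Theorem~\ref{basis}. Decomposing $M_Tu_\la\in U_\la$ via Leibniz into summands indexed by distributions of the factors of $M_T$ across the $a_1+\ldots+a_{n-1}$ tensor positions, each surviving summand lies in some $U_{T''}$ with $T''\in\Pi_\la$. The canonical distribution produces a nonzero contribution in $U_T$. Any other distribution contains a non-canonical sub-product at some position $(k,l)$, so the $\prec$-analogue of Lemma~\ref{minmon} gives $T^{k,l}_{i,j}<T(\text{indices}^{k,l})_{i,j}$ at its local lex-first differing coordinate. Setting $(i_0,j_0)$ to be the lex-minimum of these local first differences over the non-canonical positions, all strictly lex-earlier coordinates of $T$ and $T''$ agree, while at $(i_0,j_0)$ the accumulated strict inequalities force $T_{i_0,j_0}<T''_{i_0,j_0}$; hence $\hat M_{T''}\prec\hat M_T$. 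The same analysis applied to $M_{T'}u_\la$ for arbitrary $T'$ with $\hat M_{T'}\prec\hat M_T$ shows all of its $U_{T''}$-summands satisfy $\hat M_{T''}\preceq\hat M_{T'}\prec\hat M_T$. Consequently $(L_\la)_{\prec\hat M_T}$ is contained in $\sum_{T''\in\Pi_\la,\;\hat M_{T''}\prec\hat M_T}U_{T''}$, while $M_Tv_\la$ has a nonzero $U_T$-component, yielding $M_Tv_\la\notin(L_\la)_{\prec\hat M_T}$. The most delicate step is this aggregation argument converting local $\prec$-decreases of sub-products into the global comparison $\hat M_{T''}\prec\hat M_T$.
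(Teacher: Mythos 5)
Your proposal is correct. It shares its core with the paper's proof: both ultimately rest on the observation that the two rewriting moves from the proof of Lemma~\ref{minmon} (the transposition $f_{l,m}f_{i,j}\leadsto f_{l,j}f_{i,m}$ and the factorization $f_{i,l}\leadsto f_{i,j}f_{j,l}$) strictly decrease a monomial with respect to $\prec$, which is exactly the fundamental-weight case $\essential(\om_k)=\Pi_{\om_k}$; your verification via the ``lex-first differing coordinate'' characterization of $\prec$ on exponent vectors of fixed $|\cdot|$ is a correct expansion of what the paper leaves as ``one sees''. Where you genuinely diverge is in passing from fundamental weights to general $\la$: the paper invokes the Minkowski-sum property of essential signatures, $\essential(\la)+\essential(\mu)\subseteq\essential(\la+\mu)$, citing~\cite{Go}, and then concludes by cardinality; you instead re-run the triangularity argument of Theorem~\ref{basis} inside $U_\la$ with the $(\deg^A,\sq)$-comparison replaced by $\prec$, showing directly that $M_Tu_\la$ has a nonzero $U_T$-component while $(L_\la)_{\prec\hat f^T}$ lands entirely in the $U_{T''}$ with $\hat f^{T''}\prec\hat f^T$. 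This costs you the aggregation step (converting slot-wise $\prec$-decreases into a global one), which amounts to checking that $\prec$ is translation-invariant on exponent vectors --- your argument for this is correct --- but it buys a self-contained proof that does not import the result from~\cite{Go}. Your opening dimension count is an unwinding of the paper's remark that the monomial complement of the annihilator of $v_\la^\prec$ is precisely the set of essential signatures, so that part is only superficially different.
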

\begin{proof}
This can be proved using the notion of essential signatures (see~\cite{favourable,Go,essential,MY}). Namely, choose a PBW basis $\mathcal B$ in $\mU(\fn_-)$ and a bijection $\theta:\mathcal B\to \bZ_{\ge 0}^{\{1\le i<j\le n\}}$ taking each basis element to its exponent vector. Consider a total order $\prec'$ of the semigroup $\bZ_{\ge 0}^{\{1\le i<j\le n\}}$. Then for a dominant integral weight $\la$ the set of essential signatures $\essential(\la)\subset \bZ_{\ge 0}^{\{1\le i<j\le n\}}$ (with respect to the choice of $\mathcal B$ and $\prec'$) consists of such $T$ that $\theta^{-1}(T)v_\la$ is not in the linear span of all $\theta^{-1}(T')v_\la$ with $T'\prec' T$. Obviously, the vectors $\theta^{-1}(T)v_\la$ with $T\in\essential(\la)$ form a basis in $L_\la$. 

For $T\in\bZ_{\ge 0}^{\{1\le i<j\le n\}}$ denote $\hat f^T=\prod_{i,j} \hat f_{i,j}^{T_{i,j}}$ with the factors ordered so that the image of this product in $\mU^\prec$ is nonzero. We may now set $T_1\prec T_2$ whenever $\hat f^{T_1}\prec\hat f^{T_2}$ to define a total order on the semigroup $\bZ_{\ge 0}^{\{1\le i<j\le n\}}$. Also note that the image of $\hat f^T$ in $\mU(\fn_-)$ is the monomial $M_T$ considered in the previous sections. %Consider the PBW basis in $\mU(\fn_-)$ consisting of the images of the $\hat f^T$ (this coincides with the PBW basis consisting of ordered monomials considered in the previous sections). 

Now, since the ideal $\initial_\prec\mathcal I_\la$ is monomial, we see that $L_\la^\prec=\mU^\prec/\tilde{\mathcal J}_\la$ where $\tilde{\mathcal J}_\la$ (the annihilator of $v_\la^\prec$) is spanned by some set of the $\chi^T$. The characterization of $L_\la^\prec$ as of an associated graded space given by Proposition~\ref{adinitdegen} implies that the $\chi^T$ that are not contained in $\tilde{\mathcal J_\la}$ compose the set $\essential(\la)$ of essential signatures with respect to the chosen PBW basis and $\prec$. Since we are to prove that $\tilde{\mathcal J_\la}=\mathcal J_\la$, we are to show that $\essential(\la)=\Pi_\la$.

It is known that $\essential(\la+\mu)$ contains the Minkowski sum $\essential(\la)+\essential(\mu)$ (see~\cite[Proposition 2]{Go}). Therefore, it suffices to prove that $\essential(\om_k)=\Pi_{\om_k}$ for all $1\le k\le n-1$. This amounts to showing that whenever $M_T v_{\om_k}=\pm e_{i_1,\ldots,i_k}$, we have $T\succeq T(i_1,\ldots,i_k)$. In the proof of Lemma~\ref{minmon} it was established that if $M_T v_{\om_k}=\pm e_{i_1,\ldots,i_k}$, then $M_{T(i_1,\ldots,i_k)}$ can be obtained from $M_T$ by a series of operations each of which either replaces $f_{l,m}f_{i,j}$ with $f_{l,j}f_{i,m}$ for some $l<i<j<m$ or replaces $f_{i,l}$ with $f_{i,j}f_{j,l}$ for some $i<j<l$. One sees that both of these operations decrease the exponent vector of the monomial with respect to $\prec$ and the theorem ensues.
\end{proof}

\begin{remark}
There are other ways of interpreting $\Pi_\la$ as a set of essential signatures. Proposition~\ref{minmonstrict} implies that when all inequalities in (A) and (B) are strict, the $M_T$ with $T\in\Pi_\la$ are the only ordered monomials that are $L_\la$-optimal. This means that in this case $\Pi_\la$ is the set of essential signatures with respect to, again, the PBW basis consisting of the monomials $M_T$ and the degree lexicographic order given by degree $A(T)$ and any lexicographic order. Furthermore, \cite{MY} describes a whole family of orders with respect to which (and yet again the same PBW basis) $\Pi_\la$ is the set of essential signatures. This family does not seem to contain the orders considered here, however.
\end{remark}

Lemma~\ref{annideal} shows that the structure of $L_\la^\prec$ is very simple. The only $\chi^T$ acting nontrivially on $v_\la^\prec$ are those with $T\in\Pi_\la$ and $\{\chi^Tv_\la^\prec,T\in\Pi_\la\}$ is a basis in $L_\la^\prec$. This allows us to immediately obtain an analog of Theorem~\ref{main} without defining tensor products (which will then be introduced to give an analog of Theorem~\ref{mainproj}). 

For a complex vector $c=(c_{i,j})\in\bC^{\{1\le i<j\le n\}}$ and any $\la$ define the operator $\exp(c)$ on $L_\la^\prec$ as the product \[\exp(c_{1,2}\chi_{1,2})\dots\exp(c_{1,n}\chi_{1,n})\exp(c_{2,3}\chi_{2,3})\ldots\exp(c_{n-1,n}\chi_{n-1,n})\] (with the factors again ordered first by $i$ and then by $j$). These operators are invertible and induce automorphisms of $\bP(L_\la^\prec)$ which we also denote $\exp(c)$. We denote $\bm v_\la^\prec\in\bP(L_\la^\prec)$ the point corresponding to $v_\la^\prec$.
\begin{theorem}
For an integral dominant $\la$ the closure of the set of points $\exp(c)\bm v_\la^\prec\in\bP(L_\la^\prec)$ with $c$ ranging over $\bC^{\{1\le i<j\le n\}}$ is isomorphic to the toric variety associated with $P_\la$.
\end{theorem}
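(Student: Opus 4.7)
My strategy is to compute $\exp(c)v_\la^\prec$ explicitly in the basis provided by Lemma~\ref{annideal} and recognize the resulting homogeneous coordinates as the standard monomial map defining the projective toric variety of $P_\la$. Since each factor $\exp(c_{i,j}\chi_{i,j})=\sum_k c_{i,j}^k\chi_{i,j}^k/k!$ is a series in a single generator, multiplying the factors in the prescribed order (lexicographic in $(i,j)$) produces no commutator corrections, and one obtains
\[
\exp(c)\,v_\la^\prec \;=\; \sum_{T\in\bZ_{\ge 0}^{\{1\le i<j\le n\}}} \Bigl(\prod_{i,j}\tfrac{c_{i,j}^{T_{i,j}}}{T_{i,j}!}\Bigr)\chi^T v_\la^\prec.
\]
By Lemma~\ref{annideal}, only the terms with $T\in\Pi_\la$ survive, and for those $T$ the vectors $\chi^T v_\la^\prec$ form a basis of $L_\la^\prec$ (combining Proposition~\ref{nonzeromons} with Lemma~\ref{annideal}). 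The coefficient at $T=0$ equals $1$, so the map $\phi\colon c\mapsto \exp(c)\bm v_\la^\prec$ is everywhere regular on $\bC^{\{1\le i<j\le n\}}$.

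Reading off the homogeneous coordinates in this basis and rescaling each basis vector by $\prod T_{i,j}!$ (a projective linear isomorphism), $\phi$ becomes the standard monomial map $c\mapsto\bigl(\prod_{i,j}c_{i,j}^{T_{i,j}}\bigr)_{T\in\Pi_\la}$. By Lemma~\ref{gtequiv}, $\Pi_\la$ is precisely the set of lattice points of $P_\la$, so by definition the Zariski closure of the image of this map restricted to the torus $(\bC^*)^{\{1\le i<j\le n\}}$ is the projective toric variety $X_{P_\la}$. The torus is Zariski dense in $\bC^{\{1\le i<j\le n\}}$ and $\phi$ is regular, so standard topology (the preimage of the closure of $\phi((\bC^*)^{\{1\le i<j\le n\}})$ is closed and contains the torus) yields $\overline{E_\la}=\overline{\phi\bigl((\bC^*)^{\{1\le i<j\le n\}}\bigr)}=X_{P_\la}$, which by Lemma~\ref{gtequiv} is isomorphic to the toric variety of $GT_\la$.

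The one point calling for real care is the expansion of the product of exponentials inside the noncommutative algebra $\mU^\prec$: even though distinct $\chi_{i,j}$ need not commute, each factor $\exp(c_{i,j}\chi_{i,j})$ involves only a single generator and the chosen ordering of factors matches the ordering used to define $\chi^T$, so no Baker--Campbell--Hausdorff corrections arise. After that observation, all the real content of the theorem is packaged in Lemma~\ref{annideal}, which identifies $\Pi_\la$ as exactly the set of $T$ for which $\chi^T v_\la^\prec\neq 0$; everything else is a transparent translation between the monomial map and the projective toric variety.
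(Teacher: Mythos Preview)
Your proof is correct and follows essentially the same approach as the paper: expand $\exp(c)v_\la^\prec$ in the basis $\{\chi^T v_\la^\prec:T\in\Pi_\la\}$, observe that the homogeneous coordinates are monomials $\prod_{i,j}c_{i,j}^{T_{i,j}}$ up to a scalar depending only on $T$, rescale, and identify the closure of the monomial image with the toric variety of $P_\la$. Your version is in fact more explicit than the paper's (which simply asserts the coordinates have the form $b_T\prod c_{i,j}^{T_{i,j}}$); you spell out that $b_T=\prod 1/T_{i,j}!$, explain why no cross-terms appear in the expansion, and handle the passage from the affine parameter space to the torus carefully.
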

\begin{proof}
Consider the homogeneous coordinates in $\bP(L_\la^\prec)$ given by the basis $\{\chi^Tv_\la^\prec,T\in\Pi_\la\}$. We see that the homogeneous coordinate of $\exp(c)\bm v_\la^\prec$ corresponding to $\chi^Tv_\la^\prec$ is equal to $b_T\prod_{i,j} c_{i,j}^{T_{i,j}}$ for a certain constant $b_T$ independent of $c$. This means that by scaling the chosen basis we can make the homogeneous coordinate of $\exp(c)\bm v_\la^\prec$ corresponding to $\chi^Tv_\la^\prec$ simply equal to $\prod_{i,j} c_{i,j}^{T_{i,j}}$. The closure of the set of points with such homogeneous coordinates is precisely the desired toric variety.
\end{proof}

We move on to defining tensor products. Note that $\mU^\prec$ and all $L_\la^\prec$ are graded by $\bZ_{\ge 0}^{\{1\le i<j\le n\}}$ via $\grad(\chi^T)=T$. We consider the category $\mathcal D$ of finite-dimensional $\bZ_{\ge 0}^{\{1\le i<j\le n\}}$-graded $\mU^\prec$-modules $L$ with the following property. For any $\chi_{i,j}$ and any $\grad$-homogeneous $v\in L$ we have $\chi_{i,j}v=0$ whenever $\grad(v)_{l,m}\neq 0$ for some $(l,m)$ with $l<i$ or $l=i$ and $m<j$ (in other words, $\hat f^{\grad(v)}\prec\hat f_{i,j}$). Evidently, the $L_\la^\prec$ lie in this category.

For $L_1$ and $L_2$ in $\mathcal D$ we see that a $\bZ_{\ge 0}^{\{1\le i<j\le n\}}$-grading $\grad$ is induced on $L_1\otimes L_2$. For a $\grad$-homogeneous vector $v_1\otimes v_2\in L_1\otimes L_2$ we set $\chi_{i,j}(v_1\otimes v_2)=0$ if $\hat f^{\grad(v_1\otimes v_2)}\prec\hat f_{i,j}$, otherwise we set $\chi_{i,j}(v_1\otimes v_2)=\chi_{i,j}(v_1)\otimes v_2+v_1\otimes \chi_{i,j}(v_2)$.
\begin{proposition}
This makes $L_1\otimes L_2$ a $\mU^\prec$-module lying in $\mathcal D$. The defined tensor product in $\mathcal D$ is associative and symmetric.
\end{proposition}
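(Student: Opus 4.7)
The plan is to verify the defining relations of $\mU^\prec$ on $L_1\otimes L_2$, since the remaining claims follow by direct inspection. Proposition~\ref{nonzeromons} identifies $\mU^\prec$ as a monomial algebra whose nonzero monomials are exactly the products sorted under the lexicographic order on index pairs $(i,j)$. Its defining ideal is therefore generated by the two-letter unsorted monomials, so the relations reduce to $\chi_{i_1,j_1}\chi_{i_2,j_2}=0$ whenever $(i_1,j_1)>_{\mathrm{lex}}(i_2,j_2)$. These are the only relations that must be checked on the tensor product.

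Fix such a pair and a grad-homogeneous $v_1\otimes v_2$. If $\chi_{i_2,j_2}(v_1\otimes v_2)=0$ already by the zero rule, there is nothing to prove. Otherwise the Leibniz-like formula expands $\chi_{i_2,j_2}(v_1\otimes v_2)$ into two summands, each grad-homogeneous with grading $\grad(v_1)+\grad(v_2)$ shifted by one in position $(i_2,j_2)$. The shifted grading now has $(i_2,j_2)$ in its support, and since $(i_2,j_2)<_{\mathrm{lex}}(i_1,j_1)$, the zero rule forces $\chi_{i_1,j_1}$ to annihilate each summand. This establishes the relations; membership of $L_1\otimes L_2$ in $\mathcal D$ is then automatic because the grading is additive in the tensor factors and the zero-rule condition is built directly into the definition of the action.

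For associativity, fix a grad-homogeneous $v_1\otimes v_2\otimes v_3$. When some $(l,m)<_{\mathrm{lex}}(i,j)$ lies in the support of the total grading, both parenthesizations send the vector to zero directly. Otherwise no $\grad(v_k)$ and no pairwise sum $\grad(v_k)+\grad(v_{k'})$ contains such a coordinate, since the support of a sum of nonnegative integer vectors equals the union of the summands' supports. Each inner Leibniz step therefore applies in full, and both $\chi_{i,j}\bigl((v_1\otimes v_2)\otimes v_3\bigr)$ and $\chi_{i,j}\bigl(v_1\otimes (v_2\otimes v_3)\bigr)$ unfold to the same symmetric three-term sum $\chi_{i,j}(v_1)\otimes v_2\otimes v_3+v_1\otimes\chi_{i,j}(v_2)\otimes v_3+v_1\otimes v_2\otimes\chi_{i,j}(v_3)$. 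Symmetry under the swap of the two tensor factors is immediate from the symmetric form of the defining formula. The only mild obstacle here is the bookkeeping of when each zero rule fires across the various subproducts; this is entirely controlled by the support-monotonicity observation above, which makes every case consistent across reparenthesizations.
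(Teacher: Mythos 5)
Your proof is correct and follows essentially the same route as the paper: the relations of $\mU^\prec$ reduce to the two-letter unsorted monomials by Proposition~\ref{nonzeromons}, and these vanish on $L_1\otimes L_2$ because applying $\chi_{i_2,j_2}$ puts $(i_2,j_2)$ into the support of the grading, which triggers the zero rule for any lexicographically larger $\chi_{i_1,j_1}$. Your support-additivity argument for associativity just spells out what the paper dismisses as ``straightforward,'' and it is sound.
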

\begin{proof}
We see that whenever $l<i$ or $l=i$ and $m<j$ the image $\chi_{l,m} L$ is contained in the span of vectors $v$ with $\hat f^{\grad(v)}\prec\hat f_{i,j}$ and therefore  $\chi_{i,j}\chi_{l,m} L=0$. The remaining assertions are straightforward.
\end{proof}

The Cartan components are easily found.
\begin{proposition}
For integral dominant $\la$ and $\mu$ the $\mU^\prec$-submodule in $L_\la^\prec\otimes L_\mu^\prec$ generated by $v_\la^\prec\otimes v_\mu^\prec$ is isomorphic to $L_{\la+\mu}^\prec$.
\end{proposition}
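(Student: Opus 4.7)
The plan is to identify the annihilator of $v_\la^\prec\otimes v_\mu^\prec$ in $\mU^\prec$ as the left ideal $\mathcal J_{\la+\mu}$ from Lemma~\ref{annideal}, whereupon the conclusion follows by invoking that lemma once more.

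The first step is to establish the Leibniz-type expansion
$$\chi^T(v_\la^\prec\otimes v_\mu^\prec)=\sum_{T_1+T_2=T}\binom{T}{T_1}\,\chi^{T_1}v_\la^\prec\otimes\chi^{T_2}v_\mu^\prec,$$
where the sum ranges over $T_1,T_2\in\bZ_{\ge0}^{\{1\le i<j\le n\}}$ with $T_1+T_2=T$ and $\binom{T}{T_1}=\prod_{1\le i<j\le n}\binom{T_{i,j}}{(T_1)_{i,j}}$. I would prove this by applying the factors of $\chi^T=\chi_{a_1}\cdots\chi_{a_N}$ one at a time starting from the right, where $a_1\le\cdots\le a_N$ in the lexicographic order on pairs $(i,j)$ mandated by Proposition~\ref{nonzeromons}. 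The essential observation is that after applying $\chi_{a_N},\ldots,\chi_{a_{k+1}}$, every resulting tensor $v\otimes v'$ has combined grading $\grad(v)+\grad(v')$ supported in $\{a_l\colon l>k\}$ and hence only at positions $\ge a_k$. Consequently the grading cutoff built into the tensor-product action never triggers when $\chi_{a_k}$ is applied next, and the full Leibniz rule $\chi_{a_k}(v\otimes v')=\chi_{a_k}(v)\otimes v'+v\otimes\chi_{a_k}(v')$ is legitimate at every step. Collecting identical terms coming from the repetitions of each $\chi_{i,j}$ in $\chi^T$ produces the multinomial coefficients.

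By Lemma~\ref{annideal}, a term $\chi^{T_1}v_\la^\prec\otimes\chi^{T_2}v_\mu^\prec$ vanishes unless $T_1\in\Pi_\la$ and $T_2\in\Pi_\mu$; and for such pairs the vectors $\chi^{T_1}v_\la^\prec$ and $\chi^{T_2}v_\mu^\prec$ are distinct basis vectors of $L_\la^\prec$ and $L_\mu^\prec$, so the surviving summands are linearly independent and enter with positive integer coefficients. Hence $\chi^T(v_\la^\prec\otimes v_\mu^\prec)=0$ iff there is no decomposition $T=T_1+T_2$ with $T_1\in\Pi_\la$ and $T_2\in\Pi_\mu$, that is, iff $T\notin\Pi_\la+\Pi_\mu$. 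By Proposition~\ref{gtminkowski} combined with Lemma~\ref{gtequiv} we have $\Pi_\la+\Pi_\mu=\Pi_{\la+\mu}$.

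Since the $\chi^T$ form a basis of $\mU^\prec$ by Proposition~\ref{nonzeromons}, the annihilator of $v_\la^\prec\otimes v_\mu^\prec$ in $\mU^\prec$ is exactly the span of $\{\chi^T\colon T\notin\Pi_{\la+\mu}\}$, which is $\mathcal J_{\la+\mu}$. Therefore the $\mU^\prec$-submodule generated by $v_\la^\prec\otimes v_\mu^\prec$ is isomorphic to $\mU^\prec/\mathcal J_{\la+\mu}$, and this quotient is $L_{\la+\mu}^\prec$ by Lemma~\ref{annideal}, with $v_{\la+\mu}^\prec$ corresponding to $v_\la^\prec\otimes v_\mu^\prec$. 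The one point where care is required is the Leibniz expansion in the first step: one must verify that the grading-based annihilation in the tensor-product action never interferes throughout the iterated computation, and this reduces to the support observation recorded above.
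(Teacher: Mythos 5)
Your proof is correct and follows essentially the same route as the paper: expand $\chi^T(v_\la^\prec\otimes v_\mu^\prec)$ as a positive combination of the tensors $\chi^{T_1}v_\la^\prec\otimes\chi^{T_2}v_\mu^\prec$ with $T_1+T_2=T$, identify the annihilator as $\mathcal J_{\la+\mu}$ via the Minkowski sum property, and conclude by Lemma~\ref{annideal}. The only difference is that you spell out the verification that the grading cutoff never triggers during the iterated Leibniz expansion, which the paper leaves implicit.
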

\begin{proof}
$\chi^T(v_\la^\prec\otimes v_\mu^\prec)$ is a linear combination with positive coefficients of all the vectors of the form $\chi^{T_1}(v_\la^\prec)\otimes\chi^{T_2}(v_\mu^\prec)$ with $T_1\in\Pi_\la$, $T_2\in\Pi_\mu$ and $T_1+T_2=T$. We see that the annihilator of $v_\la^\prec\otimes v_\mu^\prec$ is precisely $\mathcal J_{\la+\mu}$ and the proposition follows.
\end{proof}

This proposition provides a commutative algebra structure on $\mathcal P^\prec=\bigoplus_\la (L_\la^\prec)^*$ and we have an alternative way of extracting the GT toric degeneration from the representation theory of $\mU^\prec$. Denote \[e_{i_1,\ldots,i_k}^\prec=\chi^{T(i_1,\ldots,i_k)}(v_{\om_k}^\prec)\in L_{\om_k}^\prec\] (i.e. the image of $e_{i_1,\ldots,i_k}$ in the corresponding graded component) and let $(e_{i_1,\ldots,i_k}^\prec)^*$ compose the dual basis in $(L_{\om_k}^\prec)^*$. Recall the toric ideal $J\subset R$ that cuts out the GT toric variety in $\bP$.
\begin{theorem}
There exists a surjection $R\twoheadrightarrow\mathcal P^\prec$ with kernel $J$ mapping $X_{i_1,\ldots,i_k}$ to $(e_{i_1,\ldots,i_k}^\prec)^*$.
\end{theorem}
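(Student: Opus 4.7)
The plan is to use the Cartan component property to obtain a surjection $R \twoheadrightarrow \mathcal P^\prec$, compute the image of each Pl\"ucker monomial explicitly via the Leibniz rule in the tensor action, observe that the resulting coefficient depends only on the Minkowski sum (forcing $J$ into the kernel), and close with a dimension count.

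First, iterating the Cartan component proposition just established embeds $L_\la^\prec$ as the $\mU^\prec$-submodule of $(L_{\om_1}^\prec)^{\otimes a_1} \otimes \cdots \otimes (L_{\om_{n-1}}^\prec)^{\otimes a_{n-1}}$ generated by the tensor product of fundamental highest weight vectors. Dualizing turns these inclusions into surjections $\bigotimes_k (L_{\om_k}^\prec)^{*\otimes a_k} \twoheadrightarrow (L_\la^\prec)^*$, which are by construction the multiplication maps of $\mathcal P^\prec$. Hence $\mathcal P^\prec$ is generated as a commutative algebra by the fundamental components $(L_{\om_k}^\prec)^*$, and since $\{(e^\prec_{i_1,\ldots,i_k})^*\}$ is a basis of $(L_{\om_k}^\prec)^*$, the assignment $X_{i_1,\ldots,i_k} \mapsto (e^\prec_{i_1,\ldots,i_k})^*$ extends to a well-defined surjective algebra homomorphism.

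Now the key computation. For a Pl\"ucker monomial $X_{{\bf i}^1} \cdots X_{{\bf i}^N}$ with ${\bf i}^l=(i^l_1,\ldots,i^l_{k_l})$ of total weight $\la=\sum_l \om_{k_l}$, I expand $\chi^T(v_{\om_{k_1}}^\prec \otimes \cdots \otimes v_{\om_{k_N}}^\prec)$ for $T \in \Pi_\la$ and read off the coefficient of $e^\prec_{{\bf i}^1} \otimes \cdots \otimes e^\prec_{{\bf i}^N}$. Because the factors of $\chi^T$ are written with $(i,j)$ lexicographically increasing, the right-to-left application schedule processes the largest $(i,j)$ first, so at every intermediate step the grading of each surviving summand lies only at positions above the next factor to apply; the tensor-action cutoff is therefore never triggered and the usual Leibniz rule applies throughout. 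Since every $T^l\in\Pi_{\om_{k_l}}$ is a $0/1$ matrix, the multinomial coefficient $\prod_{i,j}\binom{T_{i,j}}{(T^1)_{i,j},\ldots,(T^N)_{i,j}}$ collapses to $\prod_{i,j} T_{i,j}!$, so
\[
X_{{\bf i}^1}\cdots X_{{\bf i}^N} \;\longmapsto\; \Bigl(\prod_{i,j} T_{i,j}!\Bigr)\cdot (\chi^T v_\la^\prec)^*, \qquad T=\sum_l T({\bf i}^l).
\]

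Since this scalar depends only on $T$ and not on the chosen decomposition, every binomial $X_{{\bf i}^1}\cdots X_{{\bf i}^N}-X_{{\bf j}^1}\cdots X_{{\bf j}^N}$ with $\sum_l T({\bf i}^l)=\sum_l T({\bf j}^l)$ lies in the kernel. The toric ideal $J$, described just after Theorem~\ref{toric} as the kernel of $X_{i_1,\ldots,i_k} \mapsto \prod_l z_{l,i_l}$ (whose image is the semigroup ring of $\bigcup_k (\om_k,\Pi_{\om_k})$), is generated by precisely such binomials, so $J \subseteq \ker$. For the reverse inclusion I compare dimensions: $\dim \mathcal P_\la^\prec = |\Pi_\la|$ by Lemma~\ref{annideal}, and $\dim (R/J)_\la = |\Pi_\la|$ from the semigroup-ring description, forcing $J=\ker$. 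The main technical point is the Leibniz expansion itself — verifying carefully that the tensor-product cutoff never fires at any intermediate stage requires tracking the supports of the partial gradings, but the ordering of the factors in $\chi^T$ is set up exactly to avoid this.
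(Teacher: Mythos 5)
Your proof is correct and follows essentially the same route as the paper: both hinge on the observation that $\chi^T$ applied to the product of fundamental highest weight vectors yields all decomposition terms with equal (here explicitly $\prod_{i,j}T_{i,j}!$) coefficients, so the binomials generating $J$ die. The only cosmetic difference is that you finish with a dimension count $\dim(R/J)_\la=|\Pi_\la|=\dim\mathcal P^\prec_\la$, whereas the paper phrases the same conclusion as $L_\la^\prec\subset W_\la^\prec$ being the orthogonal of $J_\la$; your explicit verification that the tensor-action cutoff never fires is a welcome detail the paper leaves implicit.
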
 
\begin{proof}
It is evident that the components $(L_{\om_k}^\prec)^*$ generate $\mathcal P^\prec$. For $\la=(a_1,\ldots,a_{n-1})$ consider \[W_\la^\prec=\Sym^{a_1}(L_{\om_1}^\prec)\otimes\ldots\otimes\Sym^{a_{n-1}}(L_{\om_{n-1}}^\prec)\subset (L_{\omega_1}^\prec)^{\otimes a_1}\otimes\ldots\otimes (L_{\omega_{n-1}}^\prec)^{\otimes a_{n-1}}.\] The space $W_\la^\prec$ is seen to be a $\mU^\prec$-submodule. $W_\la^\prec$ is also dual to $R_\la$ via the chosen bases (similarly to the proof of Proposition~\ref{dualspaces}). We are to show that $L_\la^\prec\subset W_\la^\prec$ is the orthogonal of $J_\la\subset R_\la$.

However, $J_\la$ is the span of binomials \[\prod_{k=1}^{n-1}\prod_{l=1}^{a_k}X_{i^{k,l}_1,\dots,i^{k,l}_k}-\prod_{k=1}^{n-1}\prod_{l=1}^{a_k}X_{j^{k,l}_1,\dots,j^{k,l}_k}\] with \[\sum_{k=1}^{n-1}\sum_{l=1}^{a_k}T(i^{k,l}_1,\dots,i^{k,l}_k)=\sum_{k=1}^{n-1}\sum_{l=1}^{a_k}T(j^{k,l}_1,\dots,j^{k,l}_k)\in\Pi_\la.\] Meanwhile, $\chi^T$ maps the highest weight vector $\bigotimes_k(v_{\om_k}^\prec)^{a_k}$ in $W_\la^\prec$ to (a scalar multiple of) the sum of all \[\bigotimes_{k=1}^{n-1} \prod_{l=1}^{a_k}e_{i^{k,l}_1,\dots,i^{k,l}_k}\] with $\sum_{k,l}T(i^{k,l}_1,\dots,i^{k,l}_k)=T$ (where we refer to the symmetric multiplication in $\Sym(L_{\om_k}^\prec)$). The theorem follows.
\end{proof}

\begin{remark}\label{final}
To conclude let us point out that the constructions in this section appear to have a certain potential for generalization. One could define a total order $\prec'$ on $\{\hat f_{i,j}\}^*$ analogous to $\prec$ but for a different ordering of the generators $\hat f_{i,j}$. If exactly one of $\hat f_{i,l}\prec'\hat f_{i,j}$ and $\hat f_{i,l}\prec'\hat f_{j,l}$ holds for any $i<j<l$, we have a description of $\mU^{\prec'}$ similar to Proposition~\ref{nonzeromons}. This lets us define the sets $\Pi'_\la$ consisting of exponent vectors of monomials acting nontrivially on $L_\la^{\prec'}$. If, in addition, we have $\Pi'_\la+\Pi'_\mu=\Pi'_{\la+\mu}$, then we also obtain analogs of other results in this section. It would be interesting to construct other such examples, especially such where non-abelian gradings are necessary (as they are here).

Another question is whether some generalization of the above setting would let one obtain the degenerate representation theories from Section~\ref{degenact} directly. One would certainly need to invoke gradings by semigroups different from $\{\hat f_{i,j}\}^*$ as discussed in Remark~\ref{notmonomial} (since the annihilating ideals are not monomial). More generally, it would be interesting to adjust the construction in any way that produces the intermediate Gelfand-Tseltin degenerations (i.e. given by points lying on proper faces of $\mathcal K$). 
\end{remark}

\section*{Acknowledgements}

The author would like to thank Evgeny Feigin, Xin Fang, Ievgen Makedonskyi and Oksana Yakimova for helpful discussions of these subjects. The work was partially supported by the grant RSF 19-11-00056. This research was supported in part by the Young Russian Mathematics award.

\end{document}